\documentclass[10pt]{amsart}
%\sloppy
%\usepackage{amssymb,latexsym,amscd,amsmath,amsthm}
\usepackage{amssymb,latexsym,amscd,amsmath,amsthm}
\usepackage{mathdots}
\usepackage[matrix,arrow]{xy}
\input{amssym.def}
\input{xy}

\xyoption{all}

\theoremstyle{definition}

\pagestyle{plain}

\numberwithin{equation}{subsection} 
\newtheorem{guess}{theorem}[subsection]

\newtheorem{rem}[guess]{Remark}
\newtheorem{defi}[guess]{Definition}

\newtheorem{thm}[guess]{Theorem}
\newtheorem{lem}[guess]{Lemma}
\newtheorem{prop}[guess]{Proposition}
\newtheorem{Cor}[guess]{Corollary}

\newcommand{\cI}{\mathcal{I}}

\newcommand{\cQ}{\mathcal{Q}}
\newcommand{\cO}{\mathcal{O}}
\newcommand{\cC}{\mathcal{C}}
\newcommand{\cE}{\mathcal{E}}
\newcommand{\cG}{\mathcal{G}}

\newcommand{\cF}{\mathcal{F}}
\newcommand{\cH}{\mathcal{H}}
\newcommand{\cM}{\mathcal{M}}
\newcommand{\cA}{\mathcal{A}}

\newcommand{\cR}{\mathcal{R}}
\newcommand{\cL}{\mathcal{L}}

\newcommand{\cX}{\mathcal{X}}
\newcommand{\cP}{\mathcal{P}}

\newcommand{\bs}{\mathbf{s}}
\newcommand{\bb}{\mathbf{b}}

\newcommand{\Pic}{\mathrm{Pic}}

\newcommand{\Aut}{\mathrm{Aut}}

\newcommand{\Img}{\mathrm{Img}}
\newcommand{\Spec}{\mathrm{Spec}}

\newcommand{\Ker}{\mathrm{Ker}}

%\newcommand{\Sets}{\mathrm{Sets}}
% Pfeile

\newcommand{\lra}{\longrightarrow}

\newcommand{\hra}{\hookrightarrow}

\newcommand{\ra}{\rightarrow}
\newcommand{\ol}{\overline}

\newcommand{\ms}{\mapsto}

\newcommand{\ul}{\underline}

%\newcommand{\<}{\langle}
%\newcommand{\>}{\rangle}

%Greek and other symbols

\newcommand{\RR}{\mathbb{R}}

\newcommand{\PP}{\mathbb{P}}

\newcommand{\ZZ}{\mathbb{Z}}
\newcommand{\GG}{\mathbb{G}}

\newcommand{\XX}{\mathbb{X}}
\newcommand{\CC}{\mathbb{C}}
\newcommand{\bX}{\mathbb{X}}
\newcommand{\bY}{\mathbb{Y}}

%\newcommand{\fX}{\mathfrak{X}}
%%%%%%%%%%%%%%%%%%

%\newcommand{\deg}{\mathrm{deg}}

\newcommand{\Hom}{\mathrm{Hom}}

\newcommand{\Id}{\mathrm{Id}}

%%%%%%%%%%%%%%%%%%%%%%%%%%%%%%%%%%%%%%%%

%short forms for repeatedly used symbols

\begin{document}

\title{ Brauer group of moduli of torsors under Bruhat-Tits group scheme $\cG$ over a curve}

%\author[N. Hoffmann]{Norbert Hoffmann}
%\address{Mary Immaculate College, Limerick, Ireland }
%\email{norbert.hoffmann@mic.ul.ie}

\author[Y. Pandey]{Yashonidhi Pandey}
\thanks{The support of Science and Engineering Research Board under Mathematical Research Impact Centric Support File number: MTR/2017/000229 is gratefully acknowledged.}
\address{ 
Indian Institute of Science Education and Research, Mohali Knowledge city, Sector 81, SAS Nagar, Manauli PO 140306, India}
\email{ ypandey@iisermohali.ac.in, yashonidhipandey@yahoo.co.uk}

\begin{abstract} Let $X$ be a smooth projective curve over the complex numbers. We compute the Brauer group of the moduli stack of Bruhat-Tits group scheme $\cG$-torsors. When $g(X) \geq 3$ we compute the Brauer group of the regularly stable locus of the coarse moduli space of semi-stable $\cG$-torsors.

\end{abstract}
\subjclass[2000]{14F22,14D23,14D20}
\keywords{Parahoric bundles, Moduli stack, Brauer group}
\maketitle

%\tableofcontents

\section{Introduction}

Let $X$ be a smooth projective curve over $\CC$ and let $G$ be a semi-simple, simply-connected and almost simple group over $\CC$.  Pappas and Rapoport have introduced in \cite{prq} a global Bruhat-Tits group scheme $\cG \ra X$. We will work with a slightly less general group scheme following Balaji and Seshadri \cite{bs}. Let $\cR$ denote a finite non-empty set of  points on $X$. Let $X^\circ= X \setminus \cR$. For $x \in X$, let $\mathbb{D}_x=\Spec(\hat{\cO_x})$, let $K_x$ be the quotient field of $\hat{\cO_x}$ and let $\mathbb{D}^\circ_x=\Spec(K_x)$. In this paper,  by a Bruhat-Tits group scheme $\cG \ra X$ we shall mean that  $\cG$ restricted to ${X^\circ}$ is isomorphic to $ X^\circ \times G$, and for any closed point $x \in X$,  $\cG$ restricted to $\mathbb{D}_x$ is a parahoric group scheme (cf \S \ref{gpsch}) such that the gluing functions take values in $Mor(\mathbb{D}^\circ_x,G)=G(K_x)$. Our interest in this paper is to compute the Brauer group of the moduli stack and that of the coarse moduli space of regularly stable torsors under a Bruhat-Tits group scheme $\cG \ra X$ (cf \S \ref{pt}). Recall that the Brauer group for a scheme $Y$ is defined to be the group of equivalence classes of Azumaya algebras with group operation induced by tensor product (cf \S \ref{bms}). For a stack $Y$ however, we will work with the cohomological Brauer group defined as the torsion subgroup of $H^2_{\acute{e}t}(Y,\GG_m)$. Let $\cM_X(\cG)$ denote the moduli stack of $\cG$-torsors on $X$. Our first main result computes the Brauer group of $\cM_X(\cG)$. 

Let us fix a maximal torus  and a Borel subgroup $T \subset B \subset G$ over $\CC$. Let $\mathbf{a}_0$ be the alcove determined by $B$ in the apartment $\cA$ of $T$. This determines a set $\mathbf{S}$ of affine simple roots. For $x \in \cR$, let the restriction of $\cG$ to ${\Spec(\hat{\cO_x})}$ correspond to the facet $\sigma \subset \cA_T$. Let us suppose that after translation by the Iwahori-Weyl group, it corresponds to the facet $\sigma_x$ of $\mathbf{a}$. Let $Z^x$ denote the set of simple affine roots of $\mathbf{S}$ not vanishing on $\sigma_x$. Let $\theta$ denote the highest root of $G/\CC$. Let $\alpha_0$ be the affine simple root corresponding to the highest root of $G$. We set $a^\vee_{\alpha_0}=1$ folllowing \cite[Tits, last para page 650]{tits}. For simple roots $\alpha \in \mathbf{S} \setminus \alpha_0$ of $G/\CC$, let $a^\vee_\alpha$ be integers defined by the relation  \begin{equation} \label{corootcoeff}  \theta^\vee=\sum a^\vee_\alpha \alpha^\vee,  \end{equation} where $\alpha^\vee$ denotes the coroot of $\alpha$. 

\begin{thm}  The cohomological Brauer group of $\cM_X(\cG)$ is $\ZZ^{\oplus \cR}$ modulo $(1,\cdots,1)$ and $\{(0,\cdots,a_{\alpha_x}^\vee,\cdots,0) | \alpha_x \in Z^x, x \in \cR \} $.
\end{thm}
So the Brauer group is always trivial when $\cR$ consists of a single point.

 (Semi)-stabilty of $\cG$-torsors, equipped with weights, have been defined in \cite{bs}. Let $M_X^{rs}(\cG)$ denote the moduli space of regularly stable $\cG$ torsors.  Our second main result computes the Brauer group of $M^{rs}_X(\cG)$ when $g(X) \geq 3$. For simplicity, we first state our theorem in the case of one parabolic point $x$ and a minimal facet $\sigma \subset \mathbf{a}$.  Let $\cG^\sigma \ra X$ be the global Bruhat-Tits group scheme so defined. Let $\Omega_x=\{ w_\alpha | \alpha \in Z^x\}$ denote the fundamental weights of $G$ corresponding to $Z^x$ where for $\alpha_0$ we take the trivial weight. We shall view them as characters on $T$. 

\begin{thm} \label{mt} Let $g_X \geq 3$. The Brauer group of $M_X^{rs}(\cG^\sigma)$ is the quotient of $\Hom(Z_G,\GG_m)$ by $  Z_G \hra T \stackrel{\omega_\alpha}{\lra} \GG_m$ where  $\alpha \in \mathbf{S}$ is the unique affine simple root not vanishing at $\sigma$.
\end{thm} 
 
We now return to the general case.   For any facet $\sigma_x$ we define $  l_x= GCD \{ a_\alpha^\vee | \alpha \in Z^{x}  \}$. We define $ f =LCM \{l_x | x \in \cR \}$.

\begin{thm} \label{mt2}  Let $g_X \geq 3$. For every $x \in \cR$, let $I^x$ denote the set of all possible $|Z^x|$-uple integral solutions $(\cdots, n^{x}_\alpha, \cdots) \in \ZZ^{Z^x}$ to \begin{equation} \label{equationf} \sum_{\alpha \in Z^x} n^{x}_\alpha a_\alpha^\vee = f. \end{equation} For each  $e^x:=(\cdots,n^{x}_\alpha,\cdots) \in I^x$, consider the weight $\omega(e_x)= \sum_{\alpha \in Z^x} n^{x}_\alpha \omega_\alpha$. For each $e=(\cdots,e^x,\cdots) \in \prod_{x \in \cR} I^x$ consider the composite  \begin{equation} \label{wt-oneexp} Z_G \stackrel{diag}{\hra} \prod_{x \in \cR} T \stackrel{\prod_{x \in \cR} \omega(e_x)}{\ra} \prod_{x \in \cR} \GG_m \stackrel{\prod}{\lra} \GG_m.  \end{equation} The kernel of $Br(M^{rs}_X(\cG)) \ra Br(\cM_X(\cG))$ is the quotient of $\Hom(Z_G,\GG_m)$ by the subgroup generated by elements as in (\ref{wt-oneexp}) as one varies over elements $e$ in $\prod_{x \in \cR} I^x$.  In particular, when $\cR=\{x\}$, then $Br(M^{rs}_X(\cG))$ is given by this quotient.

\end{thm}

In \S \ref{crosscheck}, we cross-check  our result with \cite{bd}, \cite[Cor 6.5 Thm 4.5]{bh}  and \cite{bbgn}. In Remark \ref{proofstrategydiff}, we explain the difference between the proof strategies of \cite{bbgn} and \cite{bh}. The latter is developed in this paper. In \S \ref{strategy}, we outline another proof strategy following \cite{bd}. This is the main body of the paper. 

We develop the proof strategy of Biswas-Holla \cite{bh}. A key compuational input is from Biswas-Hoffman \cite{BHf1}.  This gives new proofs of \cite{bd} and \cite{bbgn}. In  \cite{bh}, \cite{bbgn} and \cite{bd}, the map $\Pic(M^{rs}_X(\cG)) \ra \Pic(\cM^{rs}_X(\cG))$ of (\ref{mainseq}) is an isomorphism. These do not hold in the general case. For groups other than those of type $A$ and $C$, the cohomological Brauer group of the stack may be non-trivial. The appearance of equation (\ref{equationf}) is a new feature for moduli space. By \cite{bd}  though the regularly stable locus may change as we vary parabolic weights, but the Brauer group only changes with the quasi-parabolic structure. The same phenomena holds in Theorem \ref{mt2}.

%In our case, we need more inputs on the flag varieties $Fl_{\sigma}$ (cf (\ref{flagv})) and the ind-grassmannian $\cQ_G$ (cf (\ref{uniformization})). 

Let $L_{X^\circ}(G)$ denote the ind-group of morphisms from $X^\circ \ra G$. Let $B L_{X^\circ}(G)$ denote its classifying space (cf \S \ref{lxg}). If $H^2_{\acute{E}t}(B L_{X^\circ} G,\GG_m)$ had torsion, then it would appear in the Brauer group too (cf proof of Theorem \ref{mtstackalcove}). So in \S \ref{lxg}, we show that this group is torsion-free. We prove this by passing to the analytic site of $L_{X^\circ}G$. For this reason, we need to restrict ourselves to the complex numbers.

We have restricted the group scheme $\cG$ to the context of Balaji-Seshadri for three reasons. Firstly, the restriction allows us to construct a morphism of stacks $\cM_X(\cG) \ra \cM_X(G)$ (cf \S \ref{constructionstacks}). We could have considered gluing functions to take values in $Aut(G)(\mathbb{D}_x^\circ)$ by enlarging $G$ to non-connected groups. But for simiplicity, we have restricted to this case. Secondly, semi-stability conditions in \cite{bs} or \cite{bbp} are known in these cases. Thirdly, to estimate the  codimesion of non-regularly stable locus in the stack of $\cG$-torsors, we make reduction to $G$-bundle theory by using \cite{bs}. We have restricted ourselves to the case of semi-simple, almost simple, simply-connected group $G$ rather than the case of a general reductive group to avoid technical complications. 
 
We have relagated to the appendix results known to experts for which we could not find  suitable references. It formulates a condition for cohomological descent in case of a morphism from an ind-scheme to an algebraic stack.  In Remarks \ref{whybitetale} and \ref{whycomor}, we explain why we have chosen to work with the Big-\'etale site of ind-schemes and comorphisms between sites of ind-schemes and stacks.

\subsection{Conventions and Notations}
We shall abbreviate complex numbers $\CC$ by $k$ whenever the notions and results we use hold over any algebraically closed field of arbitrary characteristic. Set $X^\circ=X \setminus \cR$. We shall fix a maximal split torus $T \subset G/k$.  We shall often need to appeal to results of groups defined over local fields $K$ and apply them to $G_K$, the base change of $G$ to $K$.  
By {\it subscripts} such as $Y_\sigma$ and $\sigma_Y$ we mean the vanishing conditions and by {\it superscripts} such as $Y^\sigma$ and $\sigma^Y$ we mean the non-vanishing conditions. 

\subsection{Acknowledgement} I had envisaged this paper as a joint work with Norbert Hoffmann. I thank him for discussions. This paper would never have been possible without encouragements and support from Vikraman Balaji, Indranil Biswas and C.S.Seshadri. We warmly thank Najmuddin Fakruddin, Rahul Kumar Singh, Yogish Holla and Jochen Heinloth for helpful discussions and for suggesting references. We thank  Tata Institute of Fundamental Research, Mumbai and Mary Immaculate College, Limerick.

\section{Local group theoretical data} \label{lgpthedata}
%Let $H$ be a split Chevalley group over $\ZZ$ such that $H \otimes K^s \simeq G \otimes K^s$.  Let us fix a pinning $(H,B_H,T_H,X)$ of $H$ over $\ZZ$ where $B_H$ is a  Borel subgroup of $H$, $T_H$ is a maximal split torus in $B_H$. Let $\Delta$ denote the set of simple roots determined by $B$. For a root $a$, let $U_a$ denote the corresponding root subgroup and $X_a$ denote the generator in the rank one free $\ZZ$-module $Lie U_a$. Then $X= \sum_{a \in \Delta} X_a \in Lie B$. The Weyl group $\overline{W}$ of $H$ will be called the absolute Weyl group of $G/K$. Let $\Sigma$ be the group of pinned automorphism of the $(H,B_H,T_H,X)$. 

%Let us fix a maximal $K$-split torus $S$ of $G$. Let $T$ be the centralizer of $S$ in $G$. It is a maximal torus of $G$. Let us fix a rational Borel subgroup $T \subset B$. Let $\tilde{K}/K$ be a cyclic extension such that $T \otimes \tilde{K}$ splits completely. Let $\Gamma= \Gal(\tilde{K}/K)$. The $\Gamma$ action on $(G,B,T)$ may be transfered to $(H,B_H,T_H)$. It is possible to fix an isomorphism \begin{equation} (G,B,T) \otimes_K \tilde{K} \simeq (H,B_H,T_H) \otimes_\ZZ \tilde{K} \end{equation}such that there exists a homomorphism $\psi:  \Gamma \ra \Sigma$ such that the induced action on $(H,B_H,T_H)$ takes the form $\psi(\gamma) \otimes \gamma$ for $\gamma \in \Gamma$. This isomorphism also 

Our base field will always be $k$. Let $\cO$ or $A:=k[[t]]$ and $K:=k((t))=k[[t]][t^{-1}]$, where $t$ denotes a uniformizing parameter.
Let $G$ be a {\em semisimple simply connected affine algebraic
group} defined over $k$. All the notions in this section hold for a connected reductive group over a local field. This is called the twisted case in \cite{pradv}. But for simplicity, {\it we will specialize these to the untwisted case namely that of $G_k(K)$ over $K$}. We shall fix a maximal torus $T \,\subset\, G$ and let
$Y(T)\,=\, \Hom(\GG_m,\, T)$ denote the group of all
one--parameter subgroups of $T$. For each maximal torus $T$ of $G$, the {\it
standard affine apartment} $\cA_T$  is an affine space under $Y(T)
\otimes_\ZZ \RR$. We now want to consider the group $G(K)$.

In general, there is no origin in an apartment $\cA_T$ (cf. \cite{bt1}). But
for recalling parahoric groups schemes, {\it we may identify $\cA_T$ with $Y(T) \otimes_\ZZ \RR$}
(see \cite[\S~2]{bs}) by choosing a point $v_0 \in \cA_T$. For a root $r$ of $G$ and an integer $n \in \ZZ$, we get an affine functional
\begin{equation} \label{afffunc} \alpha=r + n : \cA_T \ra \RR, x \ms r(x -v_0) +n.
\end{equation}
These are called the {\it affine roots} of $G$. For any point $x \in \cA_T$, let $Y_x$ denote the set of affine roots vanishing on $x$. For an integer $n\geq 0$, define
\begin{equation} \label{facetdefn}
\cH_n=\{x \in \cA | |Y_x|=n \}.
\end{equation}
A facet $\sigma$ of $\cA_T$ is defined to be the connected component of $\cH_n$ for some $n$. The dimension of a facet is its dimension as a real manifold. Since we fixed an origin $v_0 \in \cA_T$, so one can take convex combinations of points in $\cA$ to define new points in $\cA$. Points in any facet $\sigma$ of $\cA_T$ can be expressed as a convex combination of zero-dimensional facets in the closure of $\sigma$. We will call the coefficients {\it barycentric coordinates}. A point is in the interior of a facet if and only if its barycentric coordinates are strictly positive.

Let $R\,=\,R(T,G)$ denote the root system of $G$ (cf. \cite[p. 125]{springer}). Thus for
every $r \,\in\, R$, we have
the root homomorphism $u_r \,: \, \GG_a\,\lra\, G$ \cite[Proposition 8.1.1]{springer}. For any non-empty subset
$\Theta\,\subset\, \cA_T$, the {\it parahoric subgroup} 
$\cP_\Theta\,\subset\, G(K)$ is
defined (\cite[Page 8]{bs}) as 
\begin{equation} \label{defnpara}
\cP_\Theta = <T(A) \,\ u_r(t^{m_r}A) | m_r\,=\,m_r(\Theta) \,=\, - \lfloor {\rm inf}_{\theta \in \Theta} (\theta,r) \rfloor >.
\end{equation}  Moreover,
by \cite[Section 1.7]{bt} we have an affine flat smooth group scheme $\cG_\Theta\,\lra\,
\Spec(A)$ corresponding to $\Theta$. The set of $K$--valued (respectively,
$A$--valued) points of $\cG_\Theta$ is identified with $G(K)$ (respectively,
$\cP_\Theta$). The group scheme $\cG_\Theta$ is uniquely determined by its
$A$--valued points. For simplicity, in this paper $\Theta$ will always be a {\em facet} of $\cA_T$. In this case, we may check (\ref{defnpara}) for any $\theta$ in the interior $\Theta^\circ$ of $\Theta$.  The pro-unipotent radical $\cP_\Theta^u \subset \cP_\Theta$ is defined as 
\begin{equation} \label{defnparau}
\cP_\Theta^u = <T(1+t) \,\ u_r(t^{1-\lceil (\theta,r) \rceil }A) | \theta \in \Theta^\circ>.
\end{equation} 
Let $\cG_\Theta^u \ra \Spec(A)$ denote the affine flat group scheme corresponding to $\cP^u_\Theta$.

We choose a Borel $B$ in $G/k$ containing $T$. For a facet $\sigma \subset \cA$, we shall denote by $\cG_\sigma$ the parahoric group scheme over $\cO$. Let $\mathbf{a}_0$ denote the unique alcove in $\cA$ whose closure  contains $v_0$ and is contained in the finite Weyl chamber determined by our chosen Borel. This determines a set $\mathbf{S}$ of simple {\it affine roots} $\alpha$.  For $Z \subset \mathbf{S}$, let $\sigma_Z \subset \mathbf{a}$ (resp. $\sigma^Z \subset \mathbf{a}$) be the facet where exactly the roots $\alpha_i \in Z$ vanish i.e such that $\alpha_i(\sigma_Z)=0$ (resp. where exactly $\alpha_i \in Z$ do not vanish i.e $\alpha_i(\sigma^Z) \neq 0$) for all $\alpha_i \in  Z$. Similarly, for a facet $\sigma$ let 
\begin{equation} \label{Ysigma}
Y_\sigma = \{ \alpha | \textrm{ affine root vanishing at} \, \sigma \}.
\end{equation}  It is possible to identify $Y_\sigma$ with a closed sub root system of the root system of $G$ as in \cite{borels} and we will do so. Let $Z_\sigma \subset \mathbf{S}$ be the subset corresponding to $Y_\sigma$ and let
\begin{equation} \label{restroots} 
Z^\sigma \subset \mathbf{S}
\end{equation}   denote the complementary set. We refer the reader to \cite[4.6.12]{bt} for

\begin{prop}  \label{raghu} The root system of the reductive quotient $\cG_\sigma/\cG^u_\sigma$ of the special fiber of $\cG_\sigma$ is given by $Y_\sigma$.
\end{prop}

%Let us explain some notations for a facet $\sigma \subset \mathbf{a}$ and a root $r$ of $G/\CC$. By the definition of a facet (\ref{facetdefn}), for any two points $\theta'$ and $\theta$ in the interior $\sigma^\circ$ of $\sigma$, it follows that $Y_\theta = Y_{\theta'}$. Therefore if $(\theta,r) \notin \ZZ$ for any $\theta \in \sigma^\circ$, then it holds for all other interior points $\theta' \in \sigma^\circ$ also. Since the facets are convex subsets of the euclidean space, so by the intermediate value  theorem, this observation allows us to write expressions of the form $(\sigma,r) \notin \ZZ$, $\lfloor (\sigma,r) \rfloor$ and $\lceil (\sigma,r) \rceil$ by choosing an arbitrary $\theta \in \sigma^\circ$.  For facets $\sigma_1$ and $\sigma_2$ and an affine root $\alpha$, we will write $\alpha(\sigma_1) \leq \alpha(\sigma_2) $ when this holds for any pair of points in the interior of  $\sigma_1$ and $\sigma_2$.

We refer the reader to \cite[Corollary 4.2.2]{pp}
\begin{Cor}  \label{borelpar} Let $\bs$ and $\bb$ be  facets of  $\mathbf{a}$. Suppose that $\bs$ is in the closure of $\bb$. Let $G_{\bs,\bb}  \subset Y_{\bs}$ be defined by
\begin{equation} \label{Gsb}
G_{\bs,\bb}= \{ \alpha \in Y_{\bs} | 0 \leq \alpha(\bb) \}.
\end{equation}
The group $\cG_{\bb}/\cG^u_{\bs}$ is the parabolic  subgroup of $\cG_{\bs}/\cG^u_{\bs}$ given by $G_{\bs,\bb}$.
\end{Cor}

\subsection{Loop groups and their flag varieties} \label{lgpflv}
We need to recall some facts from \cite[\S 2, \S 8]{pradv} about affine Weyl group and Schubert varieties. In loc. cit. these are stated for arbitrary reductive groups that split over a tamely ramified extension $\tilde{K}/K$. Our interest is in applying these results to $G_K$. Since $\tilde{K}=K$ for us, so we shall specialize to this case.

Let $k$ be a field. Let $\cG$ be a flat affine group scheme of finite type over $k[[t]]$.
For a $k$-algebra $R$ let $R[[t]]$ denote the formal power series ring and $R((t))=R[[t]][t^{-1}]$ the field of Laurent polynomials with coefficients in $R$. Recall that the loop group $L\cG$ of $\cG$ represents the functor mapping $R$ to $\cG(R((t)))$. It is represented by an ind-affine scheme. Recall that the jet group $L^+\cG$ represents the functor mapping $R$ to $\cG(R[[t]])$. It is represented by a closed subscheme of $L \cG$ which is affine.  For a facet $\sigma$, the quotient of fpqc-sheaves  
\begin{equation} \label{flagv}
\cF l_\sigma= L\cG_{\sigma}/L^+\cG_{\sigma}
\end{equation}
is the flag variety associated to $\sigma$. It
is represented by an ind-scheme that is ind-projective over $k$.  It represents the functor that to $R$ associates the set of pairs $(\cE,\theta)$ where $\cE$ is a $\cG_{\sigma}$-torsors on $Spec R[[t]]$ together with a trivialization given by $\theta$ over $Spec R((t))$.  We recall

\begin{thm} \label{etlocsec}  \cite[Theorem 1.4]{pradv} Let $R$ be a strictly henselian ring over $k$. for any point $\Spec(R) \ra \cF l_{\sigma}$, we have $\Spec(R) \times_{\Spec(k)} L^+\cG_{\sigma} \simeq \Spec(R) \times_{\cF l_{\sigma}} LG$. 
\end{thm}

\begin{prop} \cite[Prop 10.1]{pradv} \label{PicF} There is an isomorphism
$ Pic(\cF l_{\mathbf{a}}) \simeq \ZZ^{\mathbf{S}}$
 defined by the degrees of the restrictions to $\PP^1_\alpha= L^+\cG_{\sigma^\alpha}/L^+\cG_{\mathbf{a}} \hra \cF l_{\mathbf{a}}$. 
\end{prop}

\subsection{Weyl groups and Schubert varieties} \label{wgsv}

Recall that the Kottwitz homomorphism $\kappa_H$ is defined for a reductive group $H$ over an arbitrary local field $L$ (cf \cite[2.a.2, page 127]{pradv}). Let us specialize to cases of interest to us. 
By (loc. cit. Step 1.), when $H=\mathbb{G}_m$ then writing $f \in LH(k)=k((t))^\times$ in the form $f=t^j u$ where $u \in k[[t]]^\times$, we have $\kappa_H(f)=j$. For the maximal split torus $T_K \subset G_K$,
 $\kappa_T$ is just the sum of $\kappa_{\mathbb{G}_m}$ in each coordinate. Let $$T(K)_1=\ker(\kappa_T).$$  Let $N=N(T)$ be the normaliser of $T$.
One defines the {\it Iwahori-Weyl group associated to $T$} as the quotient
\begin{equation} \label{IW}
\tilde{W} = N(K)/T(K)_1.
\end{equation}
Setting $W_0=N(K)/T(K)$ as the relative Weyl group, one has 
\begin{equation} \label{relativeWgp}
0 \ra X_*(T) \ra \tilde{W} \ra W_0 \ra 0.
\end{equation}
The choice of a point $v_0 \in \cA$ splits the above sequence. We quote
%For $\lambda \in \XX_\bullet(T)_\Gamma$  we denote its image in the Iwahori Weyl group by $t_\lambda \in \tilde{W}$. We will call it the translation by $\lambda$.

\begin{prop} \label{IwI} \cite[Appendix Prop 8]{pradv} Let $I$ denote an Iwahori subgroup of $G_K(K)$. Then $G_K(K)=I N(K) I$ and the map $In I \ms n \in \tilde{W}$ induces a bijection $I \setminus G(K) / I \simeq \tilde{W}$.
More generally, let $P$ and $P'$ be parahoric subgroups associated to facets $F$ (resp $F'$) contained in the apartment corresponding to $T$. Let $\tilde{W}^P:=(N(K) \cap P)/T(K)_1$ and similarly define $\tilde{W}^{P'}$. Then
$P \setminus G_K(K) /P' \simeq \tilde{W}^P \setminus \tilde{W} / \tilde{W}^{P'}$.
\end{prop}

%The group $\tilde{W}^P$ admits two other descriptions, which will be useful later. Suppose that the parahoric subgroup $P \subset G_K(K)$ corresponds to the facet $\sigma \subset \mathbf{a}$ in the alcove $\mathbf{a}$. Then $\tilde{W}^P$ is also the Weyl group of $\cG_{\sigma} \otimes k$. Let $Y_{\sigma} \subset \mathbf{S}$ be the set of affine simple roots vanishing on $\sigma$. Then $\tilde{W}^P=W^Y$ is the finite reflection group generated by reflections about $\alpha \in Y$.

The {\it affine Weyl group associated to $T$} is the Iwahori-Weyl group $W_a$ of the simply-connected cover $G_{sc,K}$ of the derived subgroup $G_{der,K}$ of $G_K$. Since we have assumed that $G/k$ is simply-connected and semi-simple, so for us
\begin{equation} \label{affineWeyl}
W_a=\tilde{W}.
\end{equation}
 
%\begin{prop} When $G$ is defined over $k$, we have a canonical isomorphism between $W_a$ and $\tilde{W}$. \end{prop}\begin{proof} In our case the derived group $G_{der}$ of $G$ is already simply-connected, so setting $D=G/G_{der}$, $\kappa_G$ (cf loc. cit Step 3.) is given by the composition $$ LG(k) \ra LD(k) \ra \pi_1(D)_I.$$ Thus the Kottwitz homomorphism is trivial. Now the result follows by the following ses (cf \cite[eqn (8.6) on page 152, Lemma 14]{pradv}) $$1 \ra W_a \ra \tilde{W} \stackrel{\kappa_G}{\ra} \pi_1(G) \ra 1.$$  \end{proof}

It is known that $W_a$ is a Coxeter group. Then (cf loc. cit. page 152) $\mathbf{S}$ generate $W_a$. For each $w  \in W_a$, its length $l(w)$ is the minimal number of factors in a product of $s_i$'s representing $w$. Any product realizing the minimum number of factors of $w$ is called a {\it reduced decomposition} of $w$. Let us recall the {\it Bruhat order} (cf loc. cit. page 152). One says that $w' \leq w$ if $w'$ is obtained by replacing some factors of a reduced decomposition of $w$ by $1$. It is a fact that the set of such $w'$ is independent of reduced decomposition of $w$ chosen.

We now recall Schubert cells and varieties (cf loc. cit. Definition 8.3 and \cite[\S 2]{zhu}). Let $Y, Y' \subset \mathbf{S}$ be two subsets.  Then the $L^+ \cG_{\sigma}$-orbit of $\cF l_{\sigma'}$ are parametrized by $W^Y \setminus \tilde{W} /W^{Y'}$ where $W^Y$ is the Weyl group of $\cG_{\sigma_Y} \otimes_{\cO} k$. The {\it Schubert cell} ${}^Y C_w^{Y'}$ (or ${}^\sigma C_w^{\sigma'}$) is the reduced orbit $L^+ \cG_{\sigma} n_w \subset \cF l_{\sigma'}$,
where $n_w \in N(K)$ a representative of $w  \in \tilde{W}^Y \setminus \tilde{W}/ \tilde{W}^{Y'}$ (cf Proposition \ref{IwI}). The {\it Schubert variety} ${}^YS_w^{Y'}$  ( or ${}^\sigma S_w^{\sigma'}$) is the reduced scheme with underlying set the Zariski closure of ${}^Y C_w^{Y'}$. 

{\it When $\sigma_Y=\mathbf{a}$ then we shall simply write $C^{Y'}_w$ (or $C^{\sigma'}_w$) and $S^{Y'}_w$ (or $S^{\sigma'}_w$). Further when $\sigma_Y= \sigma_{Y'}=\mathbf{a}$, then we shall write $C_w$ and  $S_w$.}

%Let us treat the special case of $\cG=G \otimes_k \cO$ being the hyperspecial model. It is the parahoric group scheme corresponding to the special point $v_0$. In this case $\cF l_{v_0}$, also denoted $Gr_G$, is called the {\it affine flag variety of $G$}. One has $\tilde{W}^P=\overline{W}$ the absolute Weyl group of $G_K$ i.e the Weyl group of $G/k$. We recall that $L^+\cG$ orbits of $\cF l_{v_0}$  are parametrized by $\overline{W} \setminus \tilde{W} / \overline{W} \simeq \XX_\bullet(T)^+$ the set of dominant coweights of $G$. For $\mu \in \XX_\bullet(T)^+$, the corresponding Schubert variety in $\cF l_{v_0}=Gr_G$ is denoted \begin{equation} \label{grmu} {Gr}_\mu. \end{equation} 

%We now treat the special case when $P=I$ is an Iwahori subgroup. Then $\tilde{W}^I$ is trivial. One denotes $\cF_I$ simply by $\cF l$. So $\tilde{W}$ itself parametrizes Iwahori orbits in $\cF l$.Let $Y \subset \mathbf{S}$ be a subset. Let $\sigma_Y \subset \mathbf{a}$ be the facet such that $\alpha_i(\sigma_Y)=0$ for all $i \in \mathbf{S} \setminus Y$. Let us denote $\cF l_{\sigma_Y}$ simply by $\cF l^Y$. Then $L^+ \cG_{\sigma_Y}$-orbits of $\cF l^{Y'}$ are parametrized by $W^Y \setminus \tilde{W} /W^{Y'}$ where $W^Y$ is the Weyl group of $\cG_{\sigma_Y} \otimes_{\cO} k$. For $w \in \tilde{W}$ we shall denote by \begin{equation} \label{flwY} {}^Y \cF l_w^{Y'} \subset \cF l^{Y'} \end{equation} the corresponding Schubert variety. We shall denote the Schubert cell - the $L^+ \cG_{\mathbf{a}}$-orbits through $w$ of $\cF l^Y$ simply by  \begin{equation} \label{flw} \cF l_w^Y. \end{equation}

\subsection{The group scheme} \label{gpsch} Let $\cR \subset X$ be a non-empty finite set of closed points. For each $x \in \cR$, we choose a facet $\sigma_x \subset \cA_T$. Let $\cG_{\sigma_x} \ra \Spec(\hat{\cO_x})$ be the parahoric group scheme corresponding to $\sigma_x$. Let $X^\circ = X \setminus \cR$. For $x \in X$, let $\mathbb{D}_x=\Spec(\hat{\cO_x})$, let $K_x$ be the quotient field of $\hat{\cO_x}$ and let $\mathbb{D}^\circ_x=\Spec(K_x)$. {\it In this paper,  by a Bruhat-Tits group scheme $\cG \ra X$ we shall mean that  $\cG$ restricted to ${X^\circ}$ is isomorphic to $ X^\circ \times G$, and for any closed point $x \in X$,  $\cG$ restricted to $\mathbb{D}_x$ is a parahoric group scheme $\cG_{\sigma_x}$ such that the gluing functions take values in $Mor(\mathbb{D}^\circ_x,G)=G(K_x)$.} The restriction on gluing functions is useful in the construction in \S \ref{constructionstacks}. Further, this is also the setup of \cite[Balaji-Seshadri Defn 5.2.1]{bs} which is our main reference for (semi)-staiblity conditions. Let us remark the setup of  \cite[Heinloth]{heinloth} and \cite[Zhu]{zhu} are the same. They consider more general group schemes which may not be split over the function field of $X$. The global conditions over $\cG$ demanded in \cite{heinloth} are satisfied in our case (cf. \cite[Introduction]{heinloth}). By \cite[Lemma 5]{heinloth} it is always possible to glue $X^\circ \times G$ with $\{ \cG_x | x \in \cR\}$ along the fpqc cover $\{X^\circ \} \cup \{ \Spec(\hat{\cO_x}) | x \in \cR \}$ of $X$ to get a group scheme $\cG \ra X$. Of course, $\cG$ depends on the gluing data. For instance, if $E \ra X$ is a principal $G$-bundle, then $\Aut(E) \ra X$ is such a group scheme. Its restriction to $X^\circ$ and $\Spec(\hat{\cO_x})$ is always the trivial group scheme, while it may or may not be trivial over $X$. Similarly, let $x_0 \in X$ be a closed point, and let $\cM$ denote the stack of vector bundles of rank $r$ on $X$ and determinant $\cO_X(-d x_0)$ where $0 \leq d <r$. Then for any $V \in \cM$, the adjoint group scheme $Aut(V) \ra X$ is obtained by gluing $X^\circ \times G$ with $\cG_\sigma$ where $\sigma$ is the unique vertex of the alcove $\mathbf{a}$ where only the affine simple root $\alpha_d$ does not vanish.

%\subsection{Closed fiber at the vertices of an alcove for semi-simple simply-connected $G/\CC$} \label{extn-ss-unip} Let $\sigma \in \mathbf{a}$ be a vertex of $\mathbf{a}$. Assume $\alpha \in \mathbf{S}$ does not vanish at $\sigma$. To $\alpha$ we associate a rational point of $\mathbf{a}$ following \cite[(2.1.4.2)]{bs} as follows. If $\alpha=\alpha_0$, then set $\theta_\alpha=v_0 \in \mathbf{a}$ which identifies with the origin $0 \in Y(T) \otimes_\ZZ \RR$. Else set \begin{equation} \theta_\alpha=\frac{\alpha^\vee}{a_\alpha^\vee} \in \mathbf{a}. \end{equation} Under our hypothesis for $G$, we have an identifiction $\mathbf{a}=\cA/W_a=(Y(T) \otimes_\ZZ \RR)/W_a$. Therefore rational points of the alcove $\mathbf{a}$ identify with $(Y(T) \otimes_\ZZ \QQ)/W_a=(Y(T) \otimes_\ZZ \QQ/\ZZ)/W=Tor(T)/W$ which identify with the conjugacy classes of torsion elements in a fixed maximal compact subgroup $K_G$ of $G/\CC$. Let $g_\alpha \in K_G$ map to $\theta_\alpha$. By \cite[Borel-de Seibenthal]{borels}, $Z_G(g_\alpha)$ is semi-simple.   By \cite[Remark 2.1.8]{bs} we have \begin{prop} The reductive quotient $\cG_\sigma/\cG^u_\sigma$ of $\cG_\sigma \otimes k$ is $Z_G(g_\alpha)$. \end{prop} 

\subsection{$\Pic(\cF l_\sigma)$ in terms of characters on $T$} Recall (cf \cite[ page 13]{ls} or \cite[page 131]{sorger}) that one has a canonical central extension  
\begin{equation} \label{LGext}
1 \ra \GG_m \ra \tilde{L}G \ra LG \ra 1.
\end{equation} 

Let $\tilde{L}^+\cG_{\sigma}$ denote 
the inverse image of $L^+\cG_{\sigma}$ in $\tilde{L}G$. When $\sigma=\mathbf{a}$ this is called {\it the Iwahori subgroup of Kac-Moody theory} (cf \cite[page 491, 13.2.2.]{sk} and \cite[\S 10]{pradv}). Then the fpqc-quotient $\tilde{L}G/\tilde{L}^+\cG_{\sigma}$ also identifies with $\cF l_{\sigma}$. From the $\tilde{L}^+\cG_{\sigma}$-fibration $\tilde{L}G \ra \cF l_{\sigma}$ using any character $\chi$ of $\tilde{L}^+\cG_{\sigma}$ we can make the line bundle as follows:
\begin{equation} \label{glb}
L_{\chi}:= \tilde{L}G \times_{\chi} \GG_m \ra  \tilde{L}G/\tilde{L}^+ \cG_{\sigma} = \cF l_{\sigma}.
\end{equation}
Conversely, one knows that line bundles on $\cF l_{\mathbf{a}}$ are all homogenous line bundles coming from characters of the Iwahori subgroup in Kac-Moody theory (cf \cite[\S 10]{pradv})). Thus their pull-back to $\tilde{L}G$ is trivial. So
\begin{equation} \label{chartilde} \Pic(\cF l_{\sigma}) \simeq \XX^*(\tilde{L}^+ \cG_{\sigma}). 
\end{equation}
Let us make this more explicit. Then $\Pic(\cF l_{\sigma})$ is a free abelian group on $Z^\sigma$ which is the set of affine simple roots in $\mathbf{S}$ not vanishing on $\sigma$. Let $\alpha \in Z^\sigma$ be a simple affine root. Let $\tilde{\omega}_\alpha$ be the fundamental weight of $\tilde{L}G$ corresponding to $\alpha$. Then $\tilde{\omega}_\alpha$ gives a character on $\tilde{L}^+ \cG_{\sigma}$. In this way, the character group in (\ref{chartilde}) is a free abelian group on the set $\tilde{\omega}_{Z^\sigma}$ of fundamental weights of $\tilde{L}G$ corresponding to $Z^\sigma$.

The extension (\ref{LGext}) splits uniquely over $L^+(G_A)$ (cf \cite{sorger}, \cite[\S 10]{pradv}). Since $L^+\cG_\mathbf{a} \hra L^+(G_A)$, so its restriction to $L^+\cG_\mathbf{a}$ also splits. Let $\tilde{T}$ denote the inverse image of $T$ in $\tilde{L}^+\cG_{\mathbf{a}}$. 
Then sequence $1 \ra \GG_m \ra \tilde{T} \ra T \ra 1$ also splits by the following lemma.

\begin{lem} \label{Tinc} Recall that in \S \ref{lgpthedata} we fixed $T/k \subset B/k \subset G/k$ and $\mathbf{a}$. We have a canonical inclusion of group schemes $T \hra L^+ \cG_{\mathbf{a}}$.
\end{lem} 
\begin{proof} Let $A=k[[t]]$. So we have an inclusion $k \hra A$ giving $p: Spec(A) \ra Spec(k)$.  At the closed fiber, we have an inclusion of $k$-group schemes $T_k \hra G_k$. This gives an inclusion of $A$-group schemes $T_A \hra G_A$ by pulling-back to $Spec(A)$. Consider the reduction at the special fiber: the sections of $T_A$ land in $T_k$ which is contained in the Borel $B_k$ of $G_k$ by hypothesis. Since the Iwarhori group scheme $\cG_{\mathbf{a}}$ is defined as the pull-back of $B_k \subset G(k)$ by the evalution at zero map, so we have a factorization
$
\xymatrix{
T_A \ar@{.>}[r] & \cG_{\mathbf{a}} \ar@{^{(}->}[r] & G_A.
}$
 Since $T_A =p^*T_k$, so by adjunction we have $Mor_A(p^*T_k,\cG_{\mathbf{a}})=Mor_k(T_k,p_*\cG_{\mathbf{a}})$. But $p_*\cG_{\mathbf{a}}=L^+\cG_{\mathbf{a}}$. 
\end{proof}

  So we get in inclusion $T \hra \tilde{T}$ and the following sequence splits 
\begin{equation} \label{centralsplitting}
0 \ra \XX^*(T) \ra \XX^*(\tilde{T}) \stackrel{c}{\lra} \ZZ \ra 0.
\end{equation}

By \cite[(10.4)]{pradv}, we have  \begin{equation} \label{4equi}
\ZZ^{\mathbf{S}}  \simeq \Pic(\cF l_{\mathbf{a}}) \simeq \XX^*(\tilde{L}^+\cG_{\mathbf{a}})  \simeq \XX^*(\tilde{T}).
\end{equation}

Let us make the identification $\ZZ^{\mathbf{S}} \simeq \XX^*(\tilde{T})$ more explicit. Let
\begin{equation} \label{roots}
\mathbf{S}=<\alpha_0=\delta - \theta, \alpha_1, \cdots, \alpha_n>
\end{equation} 
where $\delta$ pairs with the central $\GG_m \hra \tilde{T}$ and is the trivial character on the image of $T$ in $\tilde{T}$ under the splliting (\ref{centralsplitting}), $\theta$ is the highest root and the remaining are the simple roots of $G/\CC$ viewed as affine roots of $LG$. Let us take as coroots
\begin{equation} \label{coroots}
\mathbf{S^\vee}=<c- \theta^\vee,  \alpha_1^\vee, \cdots, \alpha_n^\vee>
\end{equation}
where $c$ is the image of the central $\GG_m$ in $\tilde{T}$.  We normalize the Cartan-Killing form $(|)$ on $\XX^*(T) \otimes_\ZZ \RR$ so that $(\theta|\theta)=2$. The usual Cartan matrix has $(i,j)$-entry $<\alpha_i^\vee,\alpha_j>$ where $1 \leq i,j \leq n$. This is extended by adding a $0$-th row and column and defining $A_{0,0}=2$, $A_{0,j}=-\alpha_j(\theta^\vee)$ and $A_{j,0}=-\theta(\alpha_j^\vee)$ for $1 \leq j \leq l$. The choice of $\mathbf{S}$ and $\mathbf{S^\vee}$ realizes the extended Cartan matrix so defined (cf \cite[XIII, page 484]{sk}). We will take 
\begin{equation} \label{epsilonalpha}
\{\epsilon_\alpha \in \XX^*(\tilde{T}) | \alpha \in \mathbf{S}\} \, \, \text{ dual to the coroots} \, \,  \mathbf{S^\vee}.
\end{equation}  This makes the identification $\ZZ^{\mathbf{S}}=\XX^*(\tilde{T})$ explicit.  

Now we wish to express $\epsilon_\alpha$ in terms of the fundamental weights of $G/\CC$ and $\delta$. Under this identification, 
 let $\{ \omega_\alpha \in \XX^*(T)| \alpha \in \mathbf{S} \setminus{\alpha_0} \}$ be a $\ZZ$-basis of weights of  $G/\CC$. We will extend $\omega_\alpha$ as characters on $\tilde{T}$ by declaring them to be trivial on the central $\GG_m$ and denote this extension by $\omega_\alpha|_{\tilde{T}}$. For $\alpha \in \mathbf{S} \setminus \alpha_0$,  let $a_\alpha^\vee$ be positive integers defined by the relation $\theta^\vee=\sum a_\alpha^{\vee} \alpha^{\vee}$. 
 \begin{prop} \label{resofchar} If $\alpha \in \mathbf{S} \setminus{\alpha_0}$ then  $\epsilon_\alpha|_T=\omega_\alpha$ and  $\epsilon_0|_T$ is trivial.
\end{prop}
Let us remark in passing that this agrees with \cite[(10.5)]{pradv}, in other words we  have $(1,\cdots, a_\alpha^{\vee},\cdots) A=(\cdots, 0,\cdots)$. 
\begin{proof} We check that weights set as follows are dual to the coroots (\ref{coroots}) 
 \begin{equation} \label{epsilon-reln}
 \epsilon_0=\delta \quad, \quad
 \epsilon_\alpha=\omega_\alpha|_{\tilde{T}} + a_\alpha^\vee \delta.
 \end{equation}
 \end{proof}
 
\subsection{Central charge of line bundles on flag varieties}
For each flag variety $\cF l_{\sigma}$, the obstruction to lifting the action of $LG$ to $Pic(\cF l_{\sigma})$ defines a central extension $\tilde{L}G$ of $LG$. The weight of the action of central $\GG_m$ on line bundles defines   {\it central charge} homomorphism (cf \cite[Remark 10.2]{pradv})  \begin{equation} \label{centralcharge} c_\sigma: Pic(\cF l_{\sigma}) \ra \ZZ. 
\end{equation} 
It satisfies the property that $ker(c_\sigma)= \XX^*(\cG_{\sigma} \otimes k)$.
By \cite[page 11, last paragraph]{zhu} the central charge on $\Pic(\cF l_{\sigma})$ can be defined after pull-back to $\Pic(\cF l_{\mathbf{a}})$. On $\cF l_{\mathbf{a}}$ it sends the line bundle $L_{\epsilon_\alpha}$ to $a_\alpha^\vee$ (cf \cite[\S 10]{pradv}).
 
\section{Preliminaries on Global constructions} \label{globalconst}
We begin with a very general cadre to be able to reconcile the references.

Let $C \ra S$ be a  smooth curve. Let $\cG \ra C$ be a smooth {\it affine} group scheme over $C$. Let $\Spec(R) \ra S$ be a $S$-scheme and $y: \Spec(R) \ra C$ be a $R$-point of $C$. Let $C_R:=Spec(R) \times_S C$. Let $\Gamma_y \subset C_R$ denote the graph of $y$ and $\hat{\Gamma}_y$ the completion of $C_R$ along $\Gamma_y$. So we have the closed inclusion $\Gamma_y \hra \hat{\Gamma}_y$. Let $s_R^y: \Spec(R) \ra C_R$ denote the section corresponding to $y$.

The {\it jet group} $\cL^+ \cG$ of $\cG$ is defined as follows.  For any $S$-scheme $R$, 
\begin{equation}
\cL^+ \cG(R)= \{ (y,\beta)| y: Spec R \ra C, \beta \in \cG(\hat{\Gamma}_y) \}.
\end{equation}
It is representable by a $S$-scheme.

The {\it loop group $\cL \cG$} of a global group scheme $\cG \ra C$  is defined as follows. Let $\hat{\Gamma}_y^\circ = \hat{\Gamma}_y \setminus \Gamma_y$. For any $S$-scheme $R$, 
\begin{equation}
\cL \cG(R)= \{ (y,\beta)| y: Spec R \ra C, \beta \in \cG(\hat{\Gamma}_y^\circ) \}.
\end{equation}
It is representable by an ind-scheme over $S$.  It represents the functor that to a $S$-scheme $R$ associates $(y,\cE,\alpha,\beta)$ where
\begin{enumerate}
\item $y: \Spec(R) \ra C$ is a $S$-point
\item  $\cE \ra C_R$ is a $\cG$-torsor
\item $\alpha$ is a trivialization of $\cE$ restricted to ${C_R \setminus s_R^y}$
\item $\beta$ is a trivialization of  $\cE$ restricted to $\hat{\Gamma}_y$.
\end{enumerate}

Let us recall the {\it twisted  affine flag manifold}.  Let $Gr_\cG \ra C$ represent the functor that to every $S$-scheme $\Spec(R)$ associates $(y,\cE,\alpha)$ where 
\begin{enumerate} 
\item $y:Spec(R) \ra C$ is a $S$-point, 
\item $\cE$ is a $\cG$-torsor on $C \times_S \Spec(R)$ and, 
\item $\alpha$ is a trivialization of $\cE$ restricted to ${C_R \setminus s_R^y}$. 
\end{enumerate}
It is a formally smooth ind-scheme over $C$. It can be constructed by sheafifying the quotient $\cL \cG/\cL^+ \cG$ in the fpqc-topology. The natural map $\cL \cG \ra Gr_{\cG}$ forgets the section $\beta$. One also has a natural forgetful map $p: Gr_{\cG} \ra Bun_C(\cG)$. 

Let us specialize to the case $S=Spec(k)$, $\Spec(R)=S$ and $y:Spec(S) \ra C$ is a point $x \in C(k)$. We shall denote the fiber at $x \in C(k)$ of $Gr_{\cG} \ra C$ by 
\begin{equation} \label{grx}
Gr_{\cG,x}
\end{equation} in this special situation. Similarly, let $(\cL \cG)_x$ and $(\cL^+ \cG)_x$ denote the fibers at $x$ of $\cL \cG \ra C$ and $\cL^+ \cG \ra C$. They are isomorphic to $L (\cG|_{\hat{\Gamma}^\circ_x})$ and $L^+ (\cG|_{\hat{\Gamma}_x})$ respectively. Let $(\cL \cG)_x/(\cL^+ \cG)_x$ denote the sheafified quotient in fpqc-topology. Thus we have a natural isomorphism  $Gr_{\cG,x} = (\cL \cG)_x/(\cL^+ \cG)_x$.
These notations agree with \cite[\S 2]{heinloth}.

Let $\sigma_x$ denote the facet defining the parahoric group scheme $\cG_x$  obtained by restricting $\cG \ra C$ to $\hat{\Gamma}_x$. Thus we have a natural isomorphism  
\begin{equation} \label{grxflagv}
Gr_{\cG,x} = (\cL \cG)_x/(\cL^+ \cG)_x \stackrel{(\ref{flagv})}{=} \cF l_{\sigma_x}.
\end{equation}

 We quote
\begin{thm} \label{constantcy} \cite[Zhu, Prop 4.1]{zhu} Let $C$ be a smooth curve over $k$. Let $\cG \ra C$ be a Bruhat-Tits group scheme such that over the generic point it is almost simple, absolutely simple, and simply-connected. Let $L$ be a line bundle on $Gr_{\cG}$. Then the function $c_L$ that associates to every $x \in C(k)$, the central charge (see ( \ref{centralcharge})) of the restriction of $L$ to $Gr_{\cG,x}$ is constant.
\end{thm} 

\subsection{Central charge of line bundles on moduli stack of $\cG$-torsors} \label{centralchargestack}
By \cite[\S  6, 1st paragraph]{heinloth} the central charge of line bundles on $\cM_X(\cG)$ is defined at an arbitrary point $z \in X$ after pulling them back to $Gr_{\cG,z}$ (cf (\ref{grx})) and then applying the central charge homomorphism. We make this more precise as follows. Consider the diagram
 \begin{equation}
 \xymatrix{ & Gr_{\cG} \ar[ld] \ar[rd] & \\
 X & & \cM_X(\cG)
 } 
 \end{equation}
 \begin{enumerate}
 \item By (\ref{grxflagv}), $Gr_{\cG,z}$ is naturally isomorphic to  $\cF l_{\sigma_z}$.  
 \item For computing central charge, it suffices to pull-line bundles to $\cF l_{\mathbf{a}}$.
 \item By \cite[Prop 4.1]{zhu}, this definition is independent of $z \in X$.
 \end{enumerate}

\section{Schubert varieties $S_w$}

\subsection{Bott-Samelson-Demazure-Hansen desingularization of $S_w$.} Let us fix an alcove $\mathbf{a}$. Let $w \in \tilde{W}$ and fix a reduced decomposition $w=s_{i_1}\cdots s_{i_r}$, which we will denote by $\tilde{w}$, in terms of the simple reflections about the walls of $\mathbf{a}$. For a reflection $s_{i_j}$, let $\cG_{i_j}$ denote the corresponding parahoric group scheme. The Demazure variety $D(\tilde{w})$ is defined as 
\begin{equation}
D(\tilde{w})=L^+\cG_{i_1} \times^{L^+\cG_{\mathbf{a}}} \cdots \times^{L^+ \cG_{\mathbf{a}}} L^+\cG_{i_r}/L^+\cG_{\mathbf{a}}.
\end{equation}

\begin{prop} \label{anyw} The Schubert cells  $C_w$ (cf \S  \ref{wgsv}) are affine spaces over $k$.
\end{prop}
\begin{proof}  Recall (cf \cite[Prop 8.8, (8.13)]{pradv}) that the Demazure variety $D(\tilde{w})$ is the quotient of $ \prod_{j=1,\cdots r} L^+ \cG_{i_j}$ by $(L^+\cG_{\mathbf{a}})^r$ by the right action
\begin{equation} \label{rightaction}
(p_1,\cdots,p_r) (b_1, \cdots,b_r) = (p_1b_1, b_1^{-1} p_2 b_2, \cdots , b_{r-1}^{-1} p_r b_r).
\end{equation}
The variety $D(\tilde{w})$ has two open subsets $D(\tilde{w})^\circ$ and $D^\circ(\tilde{w})$ which  are defined by the condition that the last co-ordinate does not belong to $L^+\cG_{\mathbf{a}}$ and the condition that  $(p_1, \cdots,p_r) \in D^\circ(\tilde{w})$ if no $p_i$ belongs to $L^+\cG_{\mathbf{a}}$ respectively. The open subset $D^\circ(\tilde{w})$ maps isomorphically onto the Schubert cell $C_w$ (cf \cite[proof of Prop 9.6]{pradv}). When $w$ consists of a single reflection say $s_{i_r}$ the fpqc-quotient $L^+ \cG_{i_r}/L^+\cG_{\mathbf{a}}$ identifies with $\PP^1_k$ by \cite[Prop 8.7]{pradv} and thus the open subset $C_w$ equals $\mathbb{A}^1_k$. More generally, let us write $w=w' s_{i_r}$. We want to show that $D^\circ(\tilde{w}) \ra D^\circ(\tilde{w}')$ is a line bundle. To this end, let us consider the projection map $\prod_{j=1}^r L^+ \cG_{i_j} \ra \prod_{j=1}^{r-1} L^+ \cG_{i_j}$ which forgets the last coordinate. Quotienting by $L^+\cG_{\mathbf{a}}^{r-1}$ according to the right action (\ref{rightaction}), we get a principal $L^+ \cG_{i_r}$ bundle on $D(\tilde{w}')$. Now going modulo $L^+\cG_{\mathbf{a}}$, we see that $D(\tilde{w}) \ra D(\tilde{w}')$ is a $\PP^1_k$-bundle. Let us restrict this map to the open subset $D(\tilde{w})^\circ \subset D(\tilde{w})$. Then $D(\tilde{w})^\circ \ra D(\tilde{w}')$ becomes a line bundle. Since we have $$D^\circ(\tilde{w})=   D^\circ(\tilde{w}') \times_{D(\tilde{w}')} D(\tilde{w})^\circ,$$
so $D^\circ(\tilde{w}) \ra D^\circ(\tilde{w}')$ is also a line bundle. By induction, we may suppose that $D^\circ(\tilde{w}')$ is an affine space over $k$. So $D^\circ(\tilde{w})$ is the trivial line bundle by the Quillen-Suslin theorem (\cite{quillen}). Thus $C_w= D^\circ(\tilde{w})$ is also an affine space over $k$.

%then the map forgetting the last coorodinate $D(\tilde{w}) \ra D(\tilde{w}')$ is a locally trivial fibration with fibers isomorphic to $\PP^1_k$. In fact $D(\tilde{w})$ is a $\PP^1_k$-bundle over $D(\tilde{w}')$. 
\end{proof}

\begin{prop} \label{demazurevanish} We have $H^n_{et}(D(\tilde{w}),\cO)=0$ for $n \geq 1$.
\end{prop}
\begin{proof} This follows because $D(\tilde{w})$ is an iterated $\PP^1$-fibration. Indeed, one can induct on the reduced length of $w$. The base case of length one is clear because $D(\tilde{w})=\PP^1$. Suppose that length of $w'$ is one less than that of $w$ and $w=w's$ where $s$ is a reflection. Now consider the natural projection $\pi: D(\tilde{w}) \ra D(\tilde{w}')$, which is a $\PP^1$-fibration by the proof of Prop \ref{anyw}. We have $\pi_* \cO_{D(\tilde{w})}=\cO_{D(\tilde{w}')}$ and $R^i (\pi_w)_* \cO_{\tilde{w}}=0$ for $i>0$. Now $H^n(D(\tilde{w}'), R^0(\pi)_* \cO_{D(\tilde{w})})=0$ by induction hypothesis. Thus  by considering the spectral sequence corresponding to $\Gamma_{D(\tilde{w}')} \circ \pi_*=\Gamma_{D(\tilde{w})}$, it follows that $H^n(D(\tilde{w}), \cO_{D(\tilde{w})})=0$.
\end{proof}

\subsection{Results on Schubert varieties}

\begin{prop} \label{et=sing} For Schubert varieties $S_w$ we have $H^n_{\acute{e}t}(S_w,\cO)=0$ for $n \geq 1$. Let $S:=\prod S_w$ denote a finite product of Schubert varieties. We have isomorphisms
\begin{equation} \label{resultschubert}
 H^2_{an}(S, \cO^*)_{tor} \simeq H^3_B(S,\ZZ)_{tor}
\end{equation}

\end{prop}
\begin{proof}  Let $\pi_w : D(\tilde{w}) \ra S_w$ denote the projection map. Since we are in characteristic zero, by \cite[Theorem 8.4]{pradv}, $S_w$ is normal. By \cite[(9.16), Prop 9.7 d)]{pradv} for $i >0$, we have $R^i(\pi_w)_* \cO_{D(\tilde{w})}=0$ and $(\pi_w)_*(\cO_{D(\tilde{w})})=\cO_{S_w}$.
Consider the composition of functors $\Gamma_{S_w} \circ \pi_w = \Gamma_{D(\tilde{w})}$. Let us consider the associated Grothendieck spectral sequence
$$H^{p}_{\acute{e}t}(S_w, R^q(\pi_w)_* (\cO_{D(\tilde{w})}) ) \implies H^{p+q}_{\acute{e}t}(D(\tilde{w}), \cO_{D(\tilde{w})}).$$
 For every $n\geq 1$ we have $E^{n-2,1}_2 \ra E^{n,0}_2 \ra E^{n+2,-1}_2$. Now $E^{n-2,1}_2=0$ because $R^1(\pi_w)_* (\cO_{D(\tilde{w})})=0$. Thus $E^{n,0}_2 =E^{n,0}_3$. Now we have $E^{n-3,2}_3 \ra E^{n,0}_3 \ra E^{n+3,-2}_3$. Since $E^{n-3,2}_2=0$, so $E^{n-3,2}_3$ being a sub-quotient also vanishes. Thus $E^{n,0}_3=E^{n,0}_4$. Reasoning like this, we see that $E^{n,0}_2=E^{n,0}_3=\cdots=E^{n,0}_\infty$. Thus we get the inclusion $H^n(S_w, R^0(\pi_w)_* (\cO_{D(\tilde{w})}))=E^{n,0}_2 =E^{n,0}_\infty \hra H^n(D(\tilde{w}),\cO)$. Now the right term vanishes by Prop \ref{demazurevanish}. This shows $H^n_{\acute{e}t}(S_w,\cO)=0$ for $n \geq 1$.

 Now consider the exponential sequence $0 \ra \ZZ \ra \cO_{an} \ra \cO_{an}^* \ra 0$ on $S_w(\CC)$. The second isomorphism of (\ref{resultschubert}) follows because $H^n_{an}(S(\CC),\cO)=0$ for $n>0$. This can be proven by induction on the number of factors. For the case of one factor, this follows by reasoning as before for $H^n_{\acute{e}t}(S_w,\cO)$ but now in the complex analytic category. The general case follows by inducting on the number of factors and considering the Leray sequence for the fibration that forgets one factor.
\end{proof}

\begin{prop} \label{compSw} Let $S:=\prod S_w$ denote a finite product of Schubert varieties. We have $H_3(S,\ZZ)=0$, $H_1(S,\ZZ)=0$ and $H_2(S,\ZZ)$ is a free abelian group on $2$-cells in $S_w$. We have $H^2_{\acute{e}t}(S,\GG_m)_{tor}=0$.
\end{prop} 
\begin{proof} 

By Prop \ref{anyw} the Schubert cell $C_w$ is an affine space over $k$. Since $k=\CC$, so $S_w$ has the structure of a $CW$ complex with only even dimensional cells. Now the statements on homology groups follow.  
We have $H^3_B( S,\ZZ)=0$. This follows from the universal coefficient theorem, since we have 
$$0 \ra Ext^1_\ZZ(H_2(S,\ZZ),\ZZ) \ra H^3_B(S,\ZZ) \ra Hom(H_3(S,\ZZ),\ZZ) \ra 0.$$ 
By (\ref{resultschubert}) $H^2_{an}(S, \cO^*)_{tor}=0$.

Consider the exact sequence $1 \ra \mu_n \ra \GG_m \ra \GG_m \ra 1$ in etale and analytic topology. It induces multiplication by $n$ on cohomology $H^*(?,\GG_m) \ra H^*(?,\GG_m)$. 
Any torsion class in $H^2(S, \GG_m)$  is represented by a class in $H^2(S,\mu_n)$ for some $n$ in both topologies. Therefore $\varinjlim H^2(?,\mu_n) = H^2(?,\GG_m)_{tor}$ in both topologies.
For the constant sheaf defined by $n$-th roots of unity $\mu_n$ by \cite[Expos\'{e} 16 Thm 4.1]{SGA4}  which holds for not necessarily smooth schemes, we have $$H^2_{\acute{e}t}(S,\mu_n)=H^2_{an}(S,\mu_n).$$

  So  
 $H^2_{\acute{e}t}(S, \GG_m)_{tor} =0$.
\end{proof}

\section{Ind-Grassmannian $\cQ_G$}
We denote the ind-Grassmannian  by
\begin{equation} \label{uniformization}
 \cQ_G= \prod_{x \in \cR} Gr_{\cG,x} = \prod_{x \in \cR} \cF l_{\sigma_x}.
 \end{equation}
It represents the functor that to a scheme $S$ associates families of $\cG$-torsors on $X \times S$ together with a section on $X^\circ \times S$, where $X^\circ = X \setminus \cR$. 

For generalities on ind-schemes (such as big-\'Etale sites) see \S\S \ref{indschemes}.
\begin{prop} \label{cohvan} The cohomological Brauer group $H^2_{\acute{E}t}(\cQ_G, \cO_{\cQ_G}^*)_{tor}=0.$
\end{prop}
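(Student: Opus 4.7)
The plan is to apply Proposition \ref{2.1} to $Y=\cQ_G$ and to establish separately the torsion vanishing $H^3_B(\cQ_G,\ZZ)_{\mathrm{tor}}=0$; combining the two yields the desired vanishing of $H^2_{et}(\cQ_G,\cO_{\cQ_G}^*)_{\mathrm{tor}}$ via the isomorphism in Proposition \ref{2.1}.

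The central input is the Bruhat--Tits cell structure on each factor $\cQ_{G,x}:=G(\hat K_x)/\cB_x(\hat\cO_x)$. These are ind-projective varieties exhausted by finite-dimensional Schubert subvarieties, each paved by affine Schubert cells $\cong\mathbb{A}^{\ell(w)}$ of even real dimension. Taking products over $x\in\cR$ and invoking K\"unneth, $\cQ_G=\prod_{x\in\cR}\cQ_{G,x}$ inherits an exhaustive filtration by projective varieties paved by even-dimensional cells. The standard consequence is that $H^{*}_B(\cQ_G,\ZZ)$ is torsion-free and concentrated in even degrees; in particular $H^3_B(\cQ_G,\ZZ)=0$, which establishes half of what is needed.

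To verify the two hypotheses of Proposition \ref{2.1} I would appeal to the Kumar--Mathieu vanishing $H^i(-,\cO)=0$ for $i>0$ on each finite Schubert subvariety of an affine partial flag variety. The exponential sequence then identifies $\Pic$ with $H^2_B(-,\ZZ)$ on each finite stage, and GAGA identifies holomorphic with algebraic line bundles on each projective Schubert subvariety. Passing to the direct limit over the Schubert filtration, every class of $H^2_B(\cQ_G,\ZZ)$ is represented by an algebraic line bundle, and every holomorphic line bundle on $\cQ_G$ admits an algebraic structure. Proposition \ref{2.1} then delivers $H^2_{et}(\cQ_G,\cO_{\cQ_G}^*)_{\mathrm{tor}}\cong H^3_B(\cQ_G,\ZZ)_{\mathrm{tor}}=0$.

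The principal obstacle is the ind-scheme bookkeeping: interpreting Betti cohomology, $\Pic$, and the Proposition \ref{2.1} isomorphism as colimits along the Schubert filtration, and checking that the identifications on each finite Schubert stage assemble compatibly in the limit. Modulo this care, the argument is a direct extension of the strategy of \cite[Prop 3.1]{bh} (where only a single factor $G(\hat K)/G(\hat\cO)$ enters) to the product of affine partial flag varieties indexed by $\cR$, the only new geometric content being that parahoric quotients $G(\hat K)/\cB(\hat\cO)$ still admit the even-dimensional Bruhat--Tits cell decomposition needed.
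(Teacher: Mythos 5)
Your proposal is correct and follows essentially the same route as the paper: the affine Bruhat decomposition of each parahoric quotient $G(\hat K_x)/\cB_x(\hat\cO_x)$ into even-dimensional cells via generalized Schubert varieties, K\"unneth for the product over $\cR$ to get $H^3_B(\cQ_G,\ZZ)=0$, and then Proposition \ref{2.1} to transfer this to the \'etale Brauer group. The only cosmetic difference is that the paper verifies the hypotheses of Proposition \ref{2.1} by citing Kumar--Narasimhan (Lemma 2.2) rather than unpacking the cohomology vanishing, exponential sequence, and GAGA argument as you do.
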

\begin{proof} Let us first  handle the case of one parabolic point. So $\cQ_G$ is simply $\cF l_{\sigma}$ for a certain facet $\sigma$ in an alcove $\mathbf{a}$. 

%By Proposition \ref{IwI} we have the affine Bruhat-decomposition of $Q_G$ into the disjoint union  of Schubert cells (cf (\ref{cell}))  \begin{equation} \label{decomp} Q_G= \cup_{w \in \tilde{W}/\tilde{W}^{\sigma}} C^{\sigma}_{w}. \end{equation}

Since in our case, the Iwahori-Weyl group $\tilde{W}$ equals the affine Weyl group $W_a$, so $\tilde{W}$ is a Coxeter group. Hence it has the Bruhat partial order. This partial ordering induces a partial ordering on $\tilde{W}/W_{\sigma}$ as well as follows. For $\ol{u} = u \, mod \, W_\sigma $ and $\ol{v}=v \, mod \, W_\sigma$, we have $\ol{u} \leq \ol{v}$ if there exists a $w \in W_\sigma$ such that $u \leq vw$. 
Recall (\cite{pradv}) that the Schubert variety $S^\sigma_w$  has the structure of a finite dimensional projective, but not necessarily smooth, variety over the complex numbers. Also by \cite[Prop 9.9]{pradv},
  we have
\begin{equation} \label{dirlim}
\cF l_{\sigma}= \varinjlim_{w \in \tilde{W}/W^{\sigma}} S^{\sigma}_w.
\end{equation}

Let us first prove the proposition for the special case $\sigma=\mathbf{a}$. So $W^\mathbf{a}$ is trivial.

The group $\tilde{W}$ together with the Bruhat-order is a partially ordered set. We consider it as a category. So the identity element $e \in \tilde{W}$ is the initial object. Let $Func(\tilde{W},Ab)$ denote the category of contravariant functors from $\tilde{W}$ to the category $Ab$ of abelian groups. Consider the functor $\Gamma(\tilde{W}): Ab(\cF l_{\mathbf{a}}) \ra Func(\tilde{W},Ab)$ defined by $\cF \ms (w \ms \Gamma(S_w,\cF|_{S_w}))$. We also have the functor $\varprojlim: Func(\tilde{W},Ab) \ra Ab$. Further we have 
an equality of functors $\Gamma(\cF l_{\mathbf{a}}, \bullet)= \varprojlim \circ \Gamma(\tilde{W})$. 

We claim that $\Gamma_{\tilde{W}}$ sends injectives to injectives. To prove this, it suffices to show that this functor has an exact left-adjoint. Let us take an inverse system of abelian groups $\{G_w, \phi_{w,w'} , w \leq w'\}$. Each $G_w$ defines  the sheaf of constant sections $\ul{G_w}$ associated to $G_w$ over $S_w$. By the definition of Big \'etale site of an ind-scheme (cf \S \ref{sitesofindscheme}), the inverse system of $\ul{G_w}$ defines  an abelian sheaf on $\cF l_{\mathbf{a}}$. This construction is functorial, exact and left-adjoint to $\Gamma(\tilde{W})$ i.e $$Hom_{\cF l_{\mathbf{a}}}(\{\ul{G_w}\},\cF)=Hom_{Func(\tilde{W},Ab)}(\{G_w\},\{\Gamma(S_w,\cF|_{S_w}) \}).$$

Thus we have the Grothendieck spectral sequence \begin{equation} 
\label{schubertoflagvariety} R^p \varprojlim_w \{ H^q_{\acute{E}t}(S_w, \ul{\GG_m})_w \} \implies H^*_{\acute{E}t}(\cF l_{\mathbf{a}},\ul{\GG_m}).
\end{equation}
Since for an abelian sheaf the cohomology on the big and small \'etale sites agree (cf \cite[Prop III.3.1]{milne}), so we will use them interchangeably for Schubert varieties $S_w$.

Now for all $w \in \tilde{W}$ we have $H^0_{\acute{e}t}(S_w, \ul{\GG_m})=\CC^*$. Thus for $w \leq w'$, the restriction maps are identity.  So the inverse system of abelian groups $\{H^0_{\acute{e}t}(S_w, \ul{\GG_m}) \}$ arises as $c: Ab \ra Func(\tilde{W},Ab)$ where $c$ is the functor that to a group $A$ associates the consant functor $\tilde{W} \ra Ab$. Since by definition, colimit is left-adjoint to the constant functor, so we have an adjunction $Hom_{Ab}(colim(\{B_w\}_{w \in \tilde{W}}),A)=Hom(\{B_w\},c(A))$. Further since $e$ is the initial object, so $colim(\{B_w\})=B_e$. Thus taking colimits in our case is an exact functor. Thus $c$ has an exact left-adjoint. So it takes injectives to injectives. Now $\CC^*$ is an injective abelian group. Thus $c(\CC^*)$ is an injective object in $Func(\tilde{W},Ab)$. Thus $R^2 \varprojlim_w \{ H^0_{\acute{e}t}(S_w,\ul{\GG_m})_w \}$ vanishes.

We have $H^1_{\acute{e}t}(S_w,\ul{\GG_m})=Pic(S_w)$. By GAGA, algebraic line bundles correspond functorially and bijectively to homomorphic line bundles. Now by the exponential exact sequence and the vanishing of $H^i_{cl}(S_w,\cO_{an})$ for $i=\{1,2\}$, we see that $H^1_{cl}(S_w,\cO^*_{an})=H^2_B(S_w,\ZZ)$. By the universal coefficient, theorem it follows that $H^2_B(S_w,\ZZ)=Hom(H_2(S_w,\ZZ),\ZZ)$ since $H_1(S_w,\ZZ)=0$. Recall by Prop \ref{compSw}, $H_2(S_w,\ZZ)$ is a free-abelian group on the $2$-cells. Let us explicitly describe them. Let $\mathbf{a}$ be the alcove defining the Iwahori group scheme. Let $s_i$ be the simple reflections about the walls of $\mathbf{a}$. Let $w=s_{i_1}s_{i_2}\cdots s_{i_r}$ be a reduced expression of $w$ in terms of $s_i$. The $2$-cells of $S_w$ correspond bijectively to the $\PP^1=L^+\cG_{\sigma_i}/L^+\cG_{\mathbf{a}}$ corresponding to the reflections occuring in the reduced expression of $w$ (cf also Prop \ref{PicF}). Thus for $w \leq w'$, the map $H_2(S_w,\ZZ) \ra H_2(S_{w'},\ZZ)$ are injective and its cokernel is free over $\ZZ$. Thus the restriction of line bundle map are surjective. Thus the inverse system $\{Pic(S_w) \}$ satisfies the Mittag-Leffler condition. So $R^1 \varprojlim_w \{ H^1_{\acute{e}t}(S_w,\ul{\GG_m})_w \}=0$.

Thus the natural morphism $H^2_{\acute{E}t}(\cF l_{\mathbf{a}},\ul{\GG_m}) \ra \varprojlim_w \{ H^2_{\acute{e}t}(S_w,\ul{\GG_m})_w \}$ is injective since it identifies with the edge morphism $E^2 \ra E^{0,2}_2$ of a spectral sequence. Any torsion class in $H^2_{\acute{E}t}(Q_G,\ul{\GG_m})$ must map to a torsion classs in each of $H^2_{\acute{e}t}(S_w,\ul{\GG_m})$ for $w \in \tilde{W}$. There it must be trivial by Prop \ref{compSw}. Since it is trivial in each group, so it must be zero in the inverse limit. So it must already be trivial in $H^2_{\acute{E}t}(\cF l_{\mathbf{a}},\ul{\GG_m})$.

This finishes the proof when $\cQ_G=\cF l_{\mathbf{a}}$. In the case of $\mathbf{a}$ is replaced by a facet $\sigma$, the preceeding arguments carry word-for-word. In the case of more than one point, we have $\cQ_G = \prod_{x \in \cR} \cF l_{\sigma_x}$. In this case, instead of $\tilde{W}$ we consider $Maps(\cR,\tilde{W})$ as a category with Bruhat-order as follows: say $\ol{w} \leq \ol{w'}$ if for each $x \in \cR$, we have $w_x \leq w'_x$. We again have $H^0_{\acute{e}t}(S_{\ol{w}},\ul{\GG_m})=\CC^*$ and by the see-saw theorem, we have $H^1_{\acute{e}t}(S_{\ol{w}},\ul{\GG_m})=Pic(S_{\ol{w}})=\prod_{x \in \cR} Pic(S_{w_x})$. We can conclude again that the natural morphism $H^2_{\acute{E}t}(Q_G,\ul{\GG_m}) \ra \varprojlim_w \{ H^2_{\acute{e}t}(S_{\ol{w}},\ul{\GG_m})_w \}$ is injective. Now the claim follows as before by Prop \ref{compSw}.
\end{proof}

\section{$L_{X^\circ}(G)$} \label{lxg} 

Set $X^\circ=X \setminus \cR$. Consider the presheaf of sets on the category of $\CC$-algebras which to a $\CC$-algebra $R$ associates
$$G(Spec(R) \times_k X^\circ)=Hom(Spec(R) \times_k X^\circ,G).$$ 
Let $L_{X^\circ}(G)$ denote associated sheaf of sets. It is represented by an ind-scheme (cf \cite[Lemma 20]{heinloth},  \cite[proof of Lemma 2.1]{kumar}).

The main purpose of  this section is to prove Corollary \ref{H2BLXG} which states that $H^2_{\acute{E}t}(BL_{X^\circ}(G),\GG_m)$ is a free abelian group. We quote 
\begin{thm} \cite[Prop 2.3,Prop 2.4]{te} For $k=0,1,\cdots, \infty$, let $C^{k}(X^\circ,G)$ the space of $k$-differentiable maps with compact-open topology. The natural inclusions $$L_{X^\circ} (G) \subset Hol(X^\circ,G) \subset C^{\infty} ( X^\circ, G) \subset \cdots C^{k} ( X^\circ, G) \subset \cdots C^{0}( X^\circ, G)$$ are homotopy equivalences. 
\end{thm}

Let $G_{an}$ denote the analytic space underlying $G/\CC$.
The curve $X^\circ$ is a complex {\it affine curve} ($\Sigma$ in notation of \cite{te}) {\it smoothly deformable to a bouquet of 
$N:=2g_X + |\cR|-1$ loops}. It follows \cite[cf Page 12, \S II]{te} that the homotopy type of the analytic space underlying $L_{X^\circ}(G)$ equals that of $G_{an} \times \Omega G^{\times N}_{an}$. Now the following corollary follows immediately.

\begin{Cor} \label{consimp} The ind-group $L_{X^\circ}(G)_{an}$ is connected and simply-connected. 
\end{Cor}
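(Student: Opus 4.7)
The plan is to read off $\pi_{0}$ and $\pi_{1}$ of $L_{X}(G)$ directly from the preceding homotopy identification. By Teleman's theorem the inclusion $L_{X}(G) \hookrightarrow C^{\infty}(X\setminus\cR,G)$ is a homotopy equivalence, and the remark immediately before the corollary asserts that this mapping space has the homotopy type of $G \times (\Omega G)^{\times N}$, where $N$ is the number of loops in the bouquet to which $X \setminus \cR$ deformation retracts. Hence it suffices to show that the latter space is connected and simply connected.

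Using the standard formula $\pi_{i}(\Omega G) = \pi_{i+1}(G)$, one reads
\[
\pi_{0}(L_{X}(G)) \;=\; \pi_{0}(G) \times \pi_{1}(G)^{N}, \qquad \pi_{1}(L_{X}(G)) \;=\; \pi_{1}(G) \times \pi_{2}(G)^{N}.
\]
Since $G$ is assumed semisimple and simply connected (and in particular connected as a complex Lie group), $\pi_{0}(G)=0$ and $\pi_{1}(G)=0$; both factors in the expression for $\pi_{0}(L_{X}(G))$ vanish, giving connectedness. For simple connectedness one moreover needs $\pi_{2}(G)=0$, which is the classical theorem of É.\ Cartan that the second homotopy group of any finite-dimensional Lie group vanishes (equivalently, one may pass to the maximal compact subgroup $K \subset G$, which is a deformation retract, and invoke the vanishing of $\pi_{2}(K)$ for compact Lie groups). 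Together these give $\pi_{1}(L_{X}(G))=0$.

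In summary, the entire proof is a one-line combination of (i) the homotopy equivalence quoted from Teleman, (ii) the standard loop-space shift $\pi_{i}(\Omega G)=\pi_{i+1}(G)$, and (iii) the vanishing $\pi_{1}(G)=\pi_{2}(G)=0$ for a simply connected complex semisimple Lie group. There is no real obstacle: the only substantive input beyond the preceding theorem is the classical fact $\pi_{2}(G)=0$, which can simply be cited.
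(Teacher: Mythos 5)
Your proof is correct and follows exactly the route the paper intends: the paper derives the corollary "immediately" from Teleman's homotopy equivalence and the identification of the homotopy type with $G \times (\Omega G)^{\times N}$, and your computation of $\pi_0$ and $\pi_1$ via the loop-space shift together with $\pi_1(G)=\pi_2(G)=0$ is precisely the omitted verification. No discrepancy with the paper's argument.
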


\subsection{Bar construction}
We  now recall some generalities that we will need in this section and the next. Let $H$ be a topological group with identity $e$. Let $S$ be a topological space with left $H$-action.  The Bar construction 
\begin{equation} \label{barcons} EH(S)_\bullet
\end{equation} gives a simplicial topological space  as follows. For $n \geq 1$, its zero simplicies are $s \in S$. Its $n$-simpilices are written suggestively as $h_0 | h_1 |\cdots | h_{n-1}|s$ where $h_i \in H$ and $s \in S$. The $i$-th degeneracy removes the $i$-th bar to make $h_{i}$ act on the successive element.  The $j$-th face operator inserts "$e|$" at the $j$-th place. Let $*$ be a point with trivial $H$-action. The simplicial group $EH_\bullet:=EH(*)_\bullet $ is contractible. We have a natural $H$-action on $EH_\bullet$ on the leftmost factor. The Borel construction $BH_\bullet$ is the quotient $ EH_\bullet/H$ with the induced simplicial structure and the degrees are shifted by $-1$. So its $-1$ simplices are $*/H$. Its $n$-simplices are $H^n$ where $H^0=e$. Explicitly, its face and degeneracy maps are given by 
\begin{eqnarray} \label{facedeg}
s_i(h_1,\cdots,h_n) & =& (h_1,\cdots,h_i,1,h_{i+1},\cdots,h_n) \\
d_0(h_1,\cdots,h_n) & =& (h_2,\cdots,h_n) \\
d_i(h_1,\cdots,h_n) & =& (h_1,\cdots,h_i h_{i+1},\cdots,h_n) \\
d_n(h_1,\cdots,h_n)& =& (h_1,\cdots,h_{n-1}).
\end{eqnarray}

%The quotient map $EH_\bullet \ra BH_\bullet$ is a Kan fibration with fiber given by the constant simplicial group $H_\bullet$. Thus for any $n \geq 0$, we have \begin{equation} \label{dimshift} \pi_{n}(H) \simeq \pi_{n}(H_\bullet) \simeq \pi_{n+1}(BH_\bullet),\end{equation} where the first isomorphism is well-known.

Let $BL_{X^\circ}(G)_\bullet$ be the Borel construction of $L_{X^\circ}(G)$. We view it as a simplicial ind-scheme. In particular, $e \in L_{X^\circ}(G)$ is $\Spec(\CC)$.
%By (\ref{dimshift}) and (\ref{consimp}) it  is $(3-1)$ connected. Thus by Hurewicz theorem, \begin{eqnarray} H_i(BL_X(G)_\bullet, \mathbb{Z})=0 \quad \mathrm{for} \quad 0 < i \leq 2, \\ \pi_3(BL_X(G)_\bullet) \stackrel{\simeq}{\lra} H_3(B L_X(G)_\bullet,\mathbb{Z}). \end{eqnarray}
%An analogue of the following proposition is proved in \cite[Biswas-Holla]{bh} as an application of the Hurewicz theorem. 

\begin{prop} \label{vanBL} 
Let $G$ be semi-simple and simply-connected.  Then 
\begin{enumerate}
\item Let $\ZZ$ be the trivial $L_{X^\circ}(G)$ module. Then $H^1_{\acute{E}t}(BL_{X^\circ} (G)_\bullet,\ZZ)=0$.
\item We have $H^1_{\acute{E}t} (B L_{X^\circ} (G),\GG_m)=e$.
%\item $H^2_{\acute{E}t}(BL_X(G)_\bullet, \ZZ/n\ZZ)=0$ for all $n$ and
\item $H^2_{\acute{E}t}(BL_{X^\circ} (G)_\bullet, \GG_m)=H^1_{\acute{E}t}(L_{X^\circ} (G),\GG_m)$.
\end{enumerate}
\end{prop}
\begin{proof} For any $q \geq 0$, applying the functor $H^q_{\acute{E}t}(?,\ZZ)$ to the simplicial space $BL_{X^\circ} (G)_\bullet$ we get the cosimplicial group $$H^q_{\acute{E}t}(BL_{X^\circ} (G)_\bullet,\ZZ)_\bullet.$$ We have the Atiyah-Hirzebruch spectral sequence
\begin{equation}
E^{p,q}_2:=\pi_p(H^q_{\acute{E}t}(BL_{X^\circ} (G)_\bullet,\mathbb{Z})_\bullet) \implies H^*_{\acute{E}t}(BL_{X^\circ} (G)_\bullet,\mathbb{Z})
\end{equation}
Now $H^1_{\acute{E}t}(e,\mathbb{Z})=H^1_{\acute{e}t}(e,\mathbb{Z})$ vanishes. So $\pi_0( \cdots \xymatrix{H^1_{\acute{E}t}(L_{X^\circ} (G),\mathbb{Z}) & H^1_{\acute{E}t}(e,\mathbb{Z}) \ar@<1ex>[l]\ar[l]) }$ vanishes. Now consider
\begin{equation} \pi_1(\cdots  \xymatrix{
  H^0_{\acute{E}t}(L_{X^\circ} (G)^2,\mathbb{Z}) & H^0_{\acute{E}t}(L_{X^\circ} (G),\mathbb{Z}) \ar@<1ex>[l]\ar[l]\ar@<-1ex>[l] & H^0_{\acute{E}t}(e,\mathbb{Z})
  \ar@<1ex>[l]\ar[l])
 }
 \end{equation}
 Since $L_{X^\circ} (G)$ is connected, this simplifies to 
 \begin{equation}
 \pi_1(\xymatrix{ \mathbb{Z} && \mathbb{Z} \ar[ll]_{d_0-d_1+d_2} && \mathbb{Z} \ar[ll]_{d_0-d_1}})
 \end{equation}
 By the face-degeneracy formulas (cf \ref{facedeg}) each differential above is the identity on $\ZZ$. So the above group vanishes. This proves the first assertion.
 For the following assertions, consider the spectral sequence
\begin{equation}
E^{p,q}_2:=\pi_p(H^q_{\acute{E}t}(BL_{X^\circ} (G)_\bullet,\GG_m)_\bullet) \implies H^*_{\acute{E}t}(BL_{X^\circ} (G)_\bullet,\GG_m)
\end{equation}
Since $H^1_{\acute{E}t}(e,\GG_m)$ vanishes, so $\pi_0(H^1_{\acute{E}t}(BL_{X^\circ} (G)_\bullet,\GG_m))$ also vanishes. So $E^{0,1}_2=0$. Let us abbreviate $L_{X^\circ} G$ as $L$. Consider
\begin{equation*}  \xymatrix{
 H^0_{\acute{E}t}(L^3,\mathbb{G}_m) & H^0_{\acute{E}t}(L^2,\mathbb{G}_m) \ar@<2ex>[l]\ar@<1ex>[l]\ar[l]\ar@<-1ex>[l] & H^0_{\acute{E}t}(L,\mathbb{G}_m) \ar@<1ex>[l]\ar[l]\ar@<-1ex>[l] & H^0_{\acute{E}t}(e,\mathbb{G}_m)
  \ar@<1ex>[l]\ar[l]
 }
 \end{equation*}

%\begin{proof} Let us suppose that there exists a $\lambda$ which is null homotopic but non-constant. Then since $\lambda$ is regular, so we can find an integer $N'$ such that for all $c \in \CC^*$, the number of irreducible components of $\lambda^{-1}(c)$ is bounded above by $N'$. To derive a contradiction, let us choose $N>N'$. Let us choose a $N$-covering $p: \CC^* \ra \CC^*$ given by $z \ms z^N$. Now the map $\lambda$ lifts to a regular lift $\tilde{\lambda}$ such that $\lambda= p \circ \tilde{\lambda}$ by \cite[Prop 20]{se}. Since $\lambda$ is non-constant, so $\tilde{\lambda}$ is also non-constant. Its image is a constructible set, so it will contain an open subset $U' \subset \CC^*$ that misses only finitely many points, say $F$. Then $p(F)$ also consists of finitely many points and the open subset $U:= \CC^* \setminus p(F)$ has the property that for any $c \in U$, we have $$p^{-1}(c) \subset Im(\tilde{\lambda}).$$ Thus for $c \in U$, the number of irreducible components of $\lambda^{-1}(c) = \tilde{\lambda}^{-1} p^{-1}(c)$ is at least $| p^{-1}(c)|=N$. This gives a contradiction. \end{proof}

 Since by Corollary \ref{consimp}, $L_{X^\circ} (G)_{an}$ is connected and simply-connected, so by \cite[Corollary 2.4]{kumar}, there is no non-constant regular map $\lambda:L_{X^\circ} (G) \ra \CC^*$. On the other hand, the subgroup of constant maps $L_{X^\circ} (G)^n \ra \GG_m$ equals $\CC^*$ since our base field is $\CC$. So all the groups above identify with $\CC^*$. By face-degeneracy relations (cf \ref{facedeg}) the differentials identify with the identity on $\CC^*$. So $\pi_1$ of the above cosimplicial group vanishes. Thus $E^{1,0}_2=0$. This shows the second assertion.
 
 We now consider the third assertion. Arguing as before, we see that $\pi_2$ of the above cosimplicial group vanishes. So $E^{2,0}_2=0$ and $E^{3,0}_2=0$. Since $H^2_{\acute{E}t}(e,\GG_m)$ vanishes, so $\pi_0(H^2_{\acute{E}t}(BL_{X^\circ} (G)_\bullet,\GG_m))$ also vanishes. So $E^{0,2}_2=0$. So $E^{2}=E^{1,1}_\infty$. 

%\begin{proof} Since $L_X(G)$ is connected and simply-connected, so the singular cohomology group $H^1(L_X(G),\ZZ)=0$. Thus $\lambda$ is null-homotopic. Let us express $L_X(G)$ as an inductive limit of  $\CC$-schemes $\Gamma_n$. Restricting the null-homotopy to irreducible connected components of $\Gamma_n$, we see by the previous lemma that $\lambda$ restricts to a constant map on each of them. Thus $\lambda$ itself is constant since $L_X(G)$ is connected. \end{proof}

Now consider
\begin{equation} \pi_1(\cdots  \xymatrix{
  H^1_{\acute{E}t}(L_{X^\circ}(G)^2,\mathbb{G}_m) & H^1_{\acute{E}t}(L_{X^\circ}(G),\mathbb{G}_m) \ar@<1ex>[l]\ar[l]\ar@<-1ex>[l] & H^1_{\acute{E}t}(e,\mathbb{G}_m)
  \ar@<1ex>[l]\ar[l])
 }
 \end{equation}
We have $H^1_{\acute{E}t}(e,\mathbb{G}_m)=0$. An element $l \in H^1_{\acute{E}t}(L_{X^\circ}(G),\mathbb{G}_m)$ is mapped to $(l,0)$, $(l,l)$ and $(0,l)$ in $H^1_{\acute{E}t}(L_{X^\circ}(G)^2,\mathbb{G}_m)$ under the various face maps. So $\pi_1(\cdots)=E^{1,1}_2=H^1_{\acute{E}t}(L_{X^\circ} G,\GG_m)$. Now by $E^{-1,2}_2 \ra E^{1,1}_2 \ra E^{3,0}_2$, we get $E^{1,1}_3=E^{1,1}_2$. By $E^{-2,3}_3 \ra E^{1,1}_3 \ra E^{4,-1}_3$, we have $E^{1,1}_3=E^{1,1}_4$. Hence, $E^{1,1}_2=E^{1,1}_\infty$. This shows the claim.

\end{proof}

\subsection{The analytic site of $L_{X^\circ}(G)$}
\begin{prop} \label{colimconn} We have $L_{X^\circ}(G)= \varinjlim Y_n$ for $n \in \mathbb{N}$ where  $Y_n$ are affine schemes on $\CC$ and $Y_n \ra Y_{n+1}$ is a closed immersion.
\end{prop}
\begin{proof}
Let $L(d)$ denote the geometric line bundle on $X$ corresponding to $\cO_X(d \cR)$. Let us fix once for all a closed immersion $G \hra M(k \times k)$ in matrices for some $k \geq 1$. Let $Z_d$ denote the subfunctor of $Mor(X \setminus \cR,M(k \times k))$ that to a ring $R$ associates matrices each of whose entries has a pole of order at most $d$ along $\cR \times \Spec(R)$ and is regular on $X \setminus \cR \times \Spec(R)$.  Thus $Z_d$ identifies with the functor that to a ring $R$ associates $k^2$-many sections $Hom_X(X \times Spec(R),L(d))$ of $L(d) \ra X$. Since the morphism $L(d) \ra X$ is quasi-projective, so $Z_d$ is representable. Remark that $Z_0$ is representable by $M(k \times k)$. More generally, if $l(d)=dim_\CC H^0(X,\cO_X(d \cR))$ then 
$Z_d$ is representable by $ \mathbb{A}^{k^2 \times l(d)}$. With $a_i \in \mathbb{A}^{k^2}$, the closed inclusion $Z_d \hra Z_{d+1}$ corresponds to the inclusion of affine spaces $$(a_1,\cdots,a_{l(d)}) \ms (a_1,\cdots,a_{l(d)},0_{l(d)+1},\cdots,0_{l(d+1)}).$$ So $\varinjlim Z_d=Mor(X \setminus \cR,M(k \times k))$. Set 
\begin{equation} \label{Yexplicit} Y_d:= Mor(X \setminus \cR,G) \times_{Mor(X \setminus \cR,M(k \times k))} Z_d.
\end{equation}  
Notice further that each $Y_d$ is a closed subscheme of $Z_d$. So it is affine. Further, we have $L_{X^\circ}(G)= \varinjlim Y_d$. 

%Remark that $Y_0$ is representable by $G$. Since $G_{an}$ is connected, so by inductively using the cofiber sequence $ 1 \ra \pi_0(Y_n) \ra \pi_0(Y_{n+1}) \ra \pi_0(C(i_n)) \ra \cdots$, it follows that $\pi_0(Y_{n,an})$ is trivial for all $n$. 
\end{proof}

We define $L_{X^\circ}(G)_{an}$ as a colimit of analytic spaces by setting
\begin{equation} \label{analyticcolim} L_{X^\circ}(G)_{an}= \varinjlim Y_{n,an}.
\end{equation}

By the analytic site of $L_{X^\circ}(G)_{an}$ we shall mean the following
\begin{enumerate}
\item objects are morphisms $u: U \ra L_{X^\circ}(G)_{an}$ factoring through an analytic morphism $u:U \ra Y_{n,an}$ for some $n$. 
\item a morphism from $u: U \ra L_{X^\circ}(G)_{an}$ to $u': U' \ra L_{X^\circ}(G)_{an}$ is an analytic morphism $f: U \ra U'$ of analytic spaces such that $u' \circ f=u$.
\item a covering of $u: U \ra L_{X^\circ}(G)_{an}$ is just an analytically \'etale covering $U' \ra U$ of $U$.
\end{enumerate}

\begin{prop} \label{analyticcoh} We have 
\begin{enumerate}
\item $H^n_{an}(L_{X^\circ}(G),\cO)=0$ for all $n \geq 1$.
\item the group $H^1_{an}(L_{X^\circ}(G),\GG_m)$ is a finitely generated free $\ZZ$-module.
\item $H^1_{an}(L_{X^{\circ}}(G), \mu_n)=0$ for all $n$.

\end{enumerate}
\end{prop}
\begin{proof} (1)
 By (\ref{analyticcolim}), $L_{X^\circ}(G)_{an}= \varinjlim Y_{n,an}$.  For any sheaf $\cF$ on the analytic site of $L_{X^\circ}(G)$ we have $$\Gamma_{an}(L_{X^\circ}(G),\cF)=\varprojlim_n \{ \Gamma_{an}(Y_n,\cF)\}_n.$$ 
By repeating the arguments of Proposition \ref{cohvan}, we can establish the Grothendieck spectral sequence in the analytic topology:
\begin{equation}
 R^p \varprojlim_n \{ H^q_{an}(Y_n,\cF |_{Y_n}) \}_n \implies H^*_{an}(L_{X^\circ} (G),\cF).
\end{equation}

Let us take $\cF$ as the coherent analytic sheaf $\cO_{an}$. Then since $Y_n$ are affine schemes over $\CC$, so $Y_{n,an}$ are closed analytic subspaces of $\CC^n$ for some $n$. So they are Stein spaces.
Thus for $q \geq 1$,  $H^q(Y_n,\cO)$ vanishes. So their $R^p \varprojlim$ is zero. If $q=0$, then the inverse system $\cdots H^0(Y_n,\cO) \leftarrow H^0(Y_{n+1},\cO) \leftarrow \cdots$ is surjective on each arrow. So it satisfies the Mittag-Leffler condition. So its higher $\varprojlim$ is zero.

(2)  Consider the exact sequence $0 \ra \ZZ \ra \cO_{an} \stackrel{exp}{\lra} \cO^*_{an} \ra 0$ of sheaves on the analytic site of $L_{X^\circ}(G)$. So by (1), $H^1_{an}(L_{X^\circ}(G),\GG_m) \ra H^2(L_{X^\circ}(G),\ZZ)$ is an isomorphism. Now since $L_{X^\circ}(G)$ has the homotopy type of $G \times \Omega G^{N}$, so it follows that it is $(2-1)$-connected. So $H_1(L_{X^\circ}(G),\ZZ)=0$. Thus by the universal coefficent theorem $H^2(L_{X^\circ}(G),\ZZ) \ra \Hom(H_2(L_{X^\circ}(G),\ZZ),\ZZ)$ is an isomorphism. This shows that it is finitely generated and free over $\ZZ$.
 
 (3) Consider the short exact sequence of sheaves $1 \ra \mu_n \ra \GG_m \stackrel{z \ms z^n}{\lra} \GG_m \ra 1$ on the analytic site of $L_{X^\circ}(G)_{an}$. Taking the long exact sequence, we see that $H^1_{an}(L_{X^\circ}(G),\mu_n)$ injects into $H^1_{an}(L_{X^\circ}(G),\GG_m)$ as follows. 
  Since by Corollary \ref{consimp}, $L_{X^\circ} (G)_{an}$ is connected and simply-connected, so by \cite[Corollary 2.4]{kumar}, there is no non-constant regular map $\lambda:L_{X^\circ} (G) \ra \CC^*$. So we have $H^0_{\acute{E}t}(L_{X^\circ}(G), \GG_m)=\CC^*$. Thus the first three terms reduce to $1 \ra \mu_n \ra \CC^* \stackrel{z \ms z^n}{\lra} \CC^* \ra \cdots$.
 This shows the injection. Now $H^1_{an}(L_{X^\circ}(G),\mu_n)$ is torsion. So it vanishes by the second assertion.

\end{proof}

\subsection{First cohomology on sites and torsors}
Let us begin by recalling the notion of a torsor on a Grothendieck site $\cC$. Let $\cG$ be a sheaf of groups on $\cC$. A $\cG$-torsor $\cF$ on $\cC$ is a sheaf of sets on $\cC$ endowed with an action $\cG \times \cF \ra \cF$ such that 
\begin{enumerate}
\item whenever $\cF(U)$ is non-empty, the action $\cG(U) \times \cF(U) \ra \cF(U)$ is simply-transitive.
\item for every $U \in Ob(\cC)$, there exists a covering $\{U_i \ra U\}_{i \in I}$ of $U$ such that $\cF(U_i)$ is non-empty for all $i \in I$.
 \end{enumerate}
A trivial $\cG$-torsor is the sheaf $\cG$ endowed with the natural left-action. We have a contravariant functor $\cC \ra Ab$ given by $U \ms \Gamma(U,\cF)$. By definition, 
\begin{equation} \label{globalsection}
\Gamma(\cC,\cF)=\varprojlim_{ \cC} \Gamma(U,\cF).
\end{equation}
A $\cG$-torsor is trivial if and only if $\Gamma(\cC,\cF) \neq \emptyset$.
We quote
 
\begin{lem} \cite[Lemma 5.3]{sites} \label{torsor1coh} Let $\cC$ be a site. Let $\cH$ be an abelian sheaf on $\cC$. There is a canonical bijection between the set of isomorphism classes of $\cH$-torsors and $H^1(\cC,\cH)$. 
\end{lem} 
 
We need to show a techinical result which says that the canonical bijection above behaves well under change of sites.
Let $\epsilon: \cC_1 \ra \cC_2$ be a morphism of sites. Let $\epsilon^s$ denote the sheaf pull-back on sites (cf \cite[Chapter 1]{tamme}). So $(\epsilon_s,\epsilon^s)$ form an adjoint-pair and $\epsilon_s$ is exact. Let us recall the definition of $\epsilon_p$. For $U_2 \in \cC_2$, let $(U_2 \downarrow \epsilon)$ denote the category whose
\begin{enumerate}
\item  objects are $\{(u,U_1)|u: U_2 \ra \epsilon(U_1) \in Mor(\cC_2) \}$.
\item morphisms from $(u,U_1) \ra (u',U_1')$ are $\{f:U_1 \ra U_1'| u'=\epsilon(f) u\}$.
\end{enumerate} 
We have a natural forgetful functor $(U_2 \downarrow \epsilon) \ra \cC_1$. Given a presheaf $\cG_1$ on $\cC_1$, we view it as a presheaf on $(U_2 \downarrow \epsilon)$. Now, we define
\begin{equation} \label{presheafpushforwardsites}
\epsilon_p(\cG_1)(U_2)= \varinjlim_{ (U_2 \downarrow \epsilon) } \cG_1(U_1).
\end{equation}
Let $\cH$ be an abelian sheaf on $\cC_2$. Consider the edge morphism $e: H^1(\cC_1,\epsilon^s \cH) \ra H^1(\cC_2,\cH)$ of the Leray spectral sequence. 

\begin{Cor} \label{torsorextn} Let $\cF_1$ be  a $\epsilon^s \cH$-torsor. Then $e$ maps $\cF_1$  to 
the $\cH$-torsor obtained by extending structure group by $\epsilon_s \epsilon^s \cH \ra \cH$ on $\epsilon_s \cF_1$.
\end{Cor}
\begin{proof} 
From the proof of \cite[Lemma 5.3]{sites}, let us recall the correspondence from $H^1(\cC,\cH)$ to torsors. Let $\xi \in H^1(\cC,\cH)$ be given. Choose an embedding of $\cH$ into an injective sheaf $\cI$ and let $\cQ=\cI/\cH$. Since $H^1(\cC,\cI)=0$, so $\xi$ lifts to some $q \in H^0(\cC,\cQ)$. Let $\cF \subset \cI$ be the subsheaf {\it of sets} defined by the following condition: its local sections over any open $U$ of $\cC$ map to $q|_U$. Then $\cF$ is a $\cH$-torsor and the canonical bijection maps $\xi \ms \cF$.

With notations as above, consider the exact sequence of sheaves $0 \ra \cH \ra \cI \ra \cQ \ra 0$ on $\cC_2$. Now $\epsilon^s$ is left-exact and $\epsilon^s \cI$ is an injective sheaf. Let $\cQ_1=\epsilon^s \cI/\epsilon^s \cH$ denote the sheaf quotient on $\cC_1$. So we have a natural map $\cQ_1 \ra \epsilon^s \cQ$ or equivalently $\epsilon_s \cQ_1 \ra \cQ$ by adjunction.  Now consider the diagram
\begin{equation}
\xymatrix{
0 \ar[r] & \epsilon_s \epsilon^s \cH \ar[r] \ar[d] &  \epsilon_s \epsilon^s \cI \ar[r] \ar[d] & \epsilon_s \cQ_1 \ar[r] \ar[d] & 0 \\
0 \ar[r] & \cH \ar[r] & \cI \ar[r] & \cQ \ar[r] & 0
}
\end{equation}
The left-square commutes because the left two vertical arrows are of adjunction. So the right square also commutes because of exactness of rows. The following diagram is also commutative
\begin{equation}
\xymatrix{
H^0(\cC_1,\cQ_1) \ar[r] \ar[d] & H^0(\cC_2,\epsilon_s \cQ_1) \ar[r] \ar[d] & H^1(\cC_2, \epsilon_s \epsilon^s \cH) \ar[d] \\
H^0(\cC_1,\epsilon^s \cQ) \ar[r] & H^0(\cC_2, \cQ) \ar[r] & H^1(\cC_2,\cH)
}
\end{equation}
The  rightmost vertical arrow extends the structure group, the leftmost comes from $\cQ_1 \ra \epsilon^s \cQ$ and the middle comes from adjunction of $\epsilon^s$. 

Let $q_1 \in H^0(\cC_1,\cQ_1)$ map to $\xi_1 \in H^1(\cC_1,\epsilon^s \cH)$. The edge morphism $e$ is defined by composing the left vertical arrow with the horizontal arrows to get $H^0(\cC_1,\cQ_1) \ra H^1(\cC_2,\cH)$ and then observing that $H^0(\cC_1,\epsilon^s \cQ) \ra H^1(\cC_2,\cH)$ factors through $H^1(\cC_1,\epsilon^s \cH)$. This describes the edge morphism. We may instead chase the diagram under the top horizontal row followed by the last vertical arrow.

 Then $q_1$ determines a subsheaf $\cF_1 \subset \epsilon^s \cI$ by the correspondence described above.  Now by exactness of $\epsilon_s$, we have the exact sequence of sheaves $0 \ra \epsilon_s \epsilon^s \cH \ra \epsilon_s \epsilon^s \cI \ra \epsilon_s \cQ_1 \ra 0$ on $\cC_2$. Consider the subsheaf $\epsilon_s \cF_1 \subset \epsilon_s \epsilon^s \cI$. Under the correspondence between cohomology classes and torsors recalled above, we claim that $\epsilon_s \cF_1$ corresponds to the image $q'_1$ of $q_1$ under $H^0(\cC_1,\cQ_1) \ra H^0(\cC_2,\epsilon_s \cQ_1)$. By the definition of presheaf pushforward (\ref{presheafpushforwardsites}), local sections of $\epsilon_p \cF_1$ via $\epsilon_p \cF_1 \ra \epsilon_p \epsilon^s \cI \ra \epsilon_p \cQ_1 \ra \epsilon_s \cQ_1$ map to restrictions of  $q_1'$ in $\epsilon_s \cQ_1$. Therefore local sections of $\epsilon_s \cF_1$ via $\epsilon_s \cF_1 \subset \epsilon_s \epsilon^s \cI \ra \epsilon_s \cQ_1$ map to $q_1'$. So $\epsilon_s \cF_1$ is contained in the $\epsilon_s \epsilon^s \cH$-torsor corresponding to $q'_1$. But it is itself such a torsor. So the claim follows.

Since $\epsilon_s \cF_1$ is a $\epsilon_s \epsilon^s \cH$-torsor corresponding to the image of $q_1'$ under $H^0(\cC_2,\epsilon_s \cQ_1) \ra H^1(\cC_2,\epsilon_s \epsilon^s \cH)$, so the claim follows.

\end{proof}

\subsection{Change of sites from Big-\'etale to Analytic of sheaves}
The Leray spectral sequence (cf \cite[Theorem I(3.7.6)]{tamme}) is defined for a {\it continous} morphism of sites. For us, $L_{X^{\circ}}G$ is only a sheaf on the big-\'etale site of $\Spec(\CC)$ and similarly $(L_{X^\circ}G)_{an}$ is only a sheaf on the analytic site of $\CC$. We use the setup in \cite[Giraud]{giraudbook} to define a continous morphism $F^*$ from the {\it site of sheaves} on $\acute{E}t(\Spec(\CC))$ to the {\it site of sheaves} on the analytic site of $\CC$. Then we apply the Leray spectral sequence to $F^*, L_{X^\circ}G$ and $(L_{X^\circ}G)_{an}$ to deduce the results of this subsection.

Consider the functor \begin{equation} F^{-1}: \acute{E}t(\Spec(\CC)) \ra an(\CC_{an})
\end{equation}  on underlying categories given by mapping $u: U \ra Y_n$ to the  analytic morphism $u_{an}: U_{an} \ra Y_{n,an}$. Recall \cite[Definition 0.3.1]{giraudbook}, if $X$ and $Y$ are two sites, then a functor $f^{-1}:Y \ra X$ is said to be continous if for every sheaf $G$ on $X$, the presheaf
\begin{equation} \label{pfsites} f_*(G)(y)=G(f^{-1}(y)), y \in Ob(Y)
\end{equation}
is a sheaf. Then $F^{-1}$ is a continuous morphism of sites by \cite[I(1.2.2)]{tamme} because it satisfies the following conditions:
\begin{enumerate}
\item if $\{U_i \ra U\}_{i \in I}$ is a covering of $U$ in the big-\'etale site, then $\{U_{i,an} \ra U_{an}\}_{i \in I}$ is a covering of $U_{an}$ in the analytic site.
\item for a covering $U' \ra U$ and for any $V \ra U$, we have $F^{-1}(U' \times_U V)=(U' \times_U V)_{an} = U'_{an} \times_{U,an} V_{an}= F^{-1}(U') \times_{F^{-1}(U)} F^{-1}(V)$.
\end{enumerate}

Let us proceed to define a continous morphism of sites $f^{-1}_L: \acute{E}t(L_{X^\circ}(G)) \ra an(L_{X^\circ}(G)_{an})$ which is a restriction of $F^{-1}$.

\begin{prop} \cite[J.Giraud Chapitre 0 3.1.4]{giraudbook} Let $E$ be a site and $\hat{E}$ be the category of presheaves on $E$. Let $\eta: E \ra \hat{E}$ be given by $\eta(S)(T)=Hom(T,S)$. Let $P \in Ob(\hat{E})$. Let $E/P= \hat{E}/P \times_{\hat{E}} E$. We equip $E/P$ with the topology that makes the functor $E/P \ra E$ continous. Consider an object ${\bf S}=(S,s: \eta(S) \ra P)$ in the comma category  $E/P$.
There is a natural isomorphism of comma categories
$$(E/P)_{\bf{S}} =E/S,$$
which induces a bijection between the set of refinements of $\bf{S}$ for the topology induced on $E/P$ and the set of refinements of $S$ for the topology of $E$.
\end{prop}

By definition of sites $\acute{E}t(L_{X^\circ}(G))$ and $an(L_{X^\circ}(G))$, we have an equivalence of the following categories \begin{eqnarray}
\acute{E}t(L_{X^\circ}(G))=\acute{E}t(\Spec(\CC))/L_{X^\circ}(G) \\
an(L_{X^\circ}(G))=an(\CC_{an})/L_{X^\circ}(G)_{an},
\end{eqnarray}
By the above proposition, it 
is also an equivalence of sites. Further $f^{-1}_L$ is the restriction of $F^{-1}$. We now want to define a continous morphism $F^*$ from the {\it site of sheaves} on $\acute{E}t(\Spec(\CC))$ to the {\it site of sheaves} on the analytic site of $\CC$.

For a site $E$, let $\tilde{E}$ denote the category of sheaves on $E$. We put the canonical topology on $\tilde{E}$ which is the strongest topology so that every object is representable. Let $\epsilon= a \eta$ where $a: \hat{E} \ra \tilde{E}$ is the sheafification functor. Further for any continuous morphism $f^{-1} : Y \ra X$ of sites, the functor $f_*:\tilde{X} \ra \tilde{Y}$ (\ref{pfsites}) admits a left-adjoint $f^*: \tilde{Y} \ra \tilde{X}$ by \cite[Prop 3.2]{giraudbook}. Further the diagram (cf \cite[0(3.3.2)]{giraudbook}) commutes
\begin{equation} \label{(3.3.2)}
\xymatrix{
X \ar[r]^{\epsilon} & \tilde{X} \\
Y \ar[r]^{\epsilon'} \ar[u]^{f^{-1}} & \tilde{Y} \ar[u]_{f^*}
}
\end{equation}
We quote
\begin{prop} \label{EEtilde} \cite[Proposition 3.6]{giraudbook} The functor $\epsilon: E \ra \tilde{E}$ defines a morphism of sites $\tilde{E} \ra E$ if we equip $\tilde{E}$ with the canonical topology. Consider the direct image functor on sheaves which by definition is induced by composition with $\epsilon$. It induces an equivalence between the category of sheaves of sets on $E$ and $\tilde{E}$.
\end{prop}
Therefore $\tilde{\epsilon}$ and $\tilde{\epsilon}'$ are equivalence of categories in the commutative diagram 
\begin{equation} \label{(3.3.2)1}
\xymatrix{
X \ar[r]^{\epsilon} & \tilde{X} \ar[r]^{\tilde{\epsilon}} & \tilde{\tilde{X}} \\
Y \ar[r]^{\epsilon'} \ar[u]^{f^{-1}} & \tilde{Y} \ar[u]_{f^*} \ar[r]^{\tilde{\epsilon}'} & \tilde{\tilde{Y}} \ar[u]_{(f^*)^*}
}
\end{equation}
Therefore $ \tilde{\epsilon}' f_* \tilde{\epsilon}^{-1}: \tilde{\tilde{X}} \ra \tilde{\tilde{Y}}$ identifies with $(f^*)_*$ because they are both right-adjoint to $(f^*)^*$. So $(f^*)_*$ maps sheaves to sheaves because $f_*$ does. So $f^*$ is continous morphism of sites. We apply this result setting 
\begin{equation} Y=\acute{E}t(\Spec(\CC)), \quad X=an(\CC_{an}), \quad f^{-1}=F^{-1}.
\end{equation} Since $F^{-1}$ is a morphism of sites, so the pair $(F^*,F_*): \tilde{X} \ra \tilde{Y}$ define a morphism of topos. In particular, $F^*$ commutes with finite projective limits. So 
\begin{equation} F^*: \tilde{Y} \ra \tilde{X}
\end{equation} is a continous morphism of sites. 

Let us view $L_{X^\circ}(G) \in Ob(\tilde{Y})$ and $L_{X^\circ}(G)_{an} \in Ob(\tilde{X})$. Since $L_{X^\circ}(G)=\varinjlim Y_n$ by Prop \ref{colimconn}, and $L_{X^\circ}(G)_{an}=\varinjlim Y_{n,an}$ by (\ref{analyticcolim}), since $F^*$ commutes with arbitrary colimits, we have 
\begin{equation} F^*(L_{X^\circ}(G))=L_{X^\circ}(G)_{an}.
\end{equation} Let $(F^*)^s (=(F^*)^*)$ denote the sheaf pull-back by $F^*$ for the sheaf defined by $\mu_n$ on $\tilde{X}$. By \cite[Theorem I(3.7.6)]{tamme} we have the Leray spectral sequence 
\begin{equation*}
E^{p,q}_2=H^p_{\acute{E}t}(\tilde{Y}, L_{X^\circ}(G), R^q (F^*)^s(\mu_n)) \implies E^{p+q}=H^{p+q}_{an}(\tilde{X}, F^*(L_{X^\circ}(G)),\mu_n)
\end{equation*}
 This gives the edge morphism \begin{equation} \label{edge} e: H^1(\tilde{Y}, L_{X^\circ}(G), (F^*)^s \mu_n) \ra H^1(\tilde{X}, L_{X^\circ}(G)_{an},\mu_n).
\end{equation}

Let us relate the above cohomologies on sites $\tilde{Y}$ and $\tilde{X}$ with those on $\acute{E}t(L_{X^\circ}(G))$ and $an(L_{X^\circ}(G))$ to simplify (\ref{edge}).

\begin{prop} Consider the sheaf defined by the group $\mu_n$ on $\tilde{X}$. The pull-back sheaf $(F^*)^s(\mu_n)$ on $\tilde{Y}$ is the sheaf defined by $\mu_n$ itself.
\end{prop}
\begin{proof} Consider the presheaf $\ul{\mu_n}$ defined by $\mu_n$ on the sites $\tilde{X}$ and $\tilde{Y}$. By the last proposition, we may assume that our sites are $X$ and $Y$ instead of $\tilde{X}$ and $\tilde{Y}$ and our functor is $F^{-1}$ instead of $F^*$. Let us observe that this presheaf is already a sheaf. Since $\mu_n$ is discrete, so any local section of $\ul{\mu_n}$ is locally constant on any open of these sites. To verify the sheaf condition, we may  restrict ourselves to Zariski open covers (or open covers in the analytic topology) and a single \'etale morphism $V \ra U$. In both cases, we may further assume that $U$ is connected. Let $V_i, i \leq n$ be the connected components of $V$ or of an open cover of $U$.  Consider $pr_1^*, pr_2^*: \ul{\mu_n}(V) \ra \ul{\mu_n}(V \times_U V)$. Let $s \in \ul{\mu_n}(V)$ and $s_i$ denote its restriction to $V_i$. 
Let us say that $V_i$ is related to $V_j$ if $V_i \times_U V_j \neq \emptyset$. Observe that if $V_i$ is related to $V_j$, then the value in the group $\mu_n$ of $s_i$ and $s_j$ are equal. This property holds more generally for a pair $V_i$ and $V_j$ which are in the same equivalence class generated by this relation. Now observe that over a conneced $U$ any two $V_i$ and $V_j$ are related.

 By definition $(F^*)^s= \# (F^*)^p i$, where $i: Sh \ra Prsh$ is the inclusion functor of sheaves into presheaves, $(F^*)^p$ is the presheaf pull-back defined by $(F^*)^p(\cF)(U)=\cF(f(U))$, and $\#$ is the sheafification functor. We have $i(\mu_n)$ is the presheaf defined by $\mu_n$. Now it suffices to check that $(F^*)^p (\mu_n)$ is the presheaf defined by $\mu_n$. This holds. So $(F^*)^s(\mu_n)=\mu_n$.

\end{proof}

The equivalence of sheaves of sets on $E$ and $\tilde{E}$ in Proposition \ref{EEtilde} restricts to an equivalence of abelian sheaves on $E$ and $\tilde{E}$. It preserves exactness. Further injective sheaves restrict to injective sheaves. Finally for any sheaf $\cF$ on $\tilde{E}$, we have the equality of global sections by definition (cf (\ref{globalsection}))
\begin{equation}
\Gamma(\tilde{Y}, L_{X^\circ}(G),\cF)=\Gamma_{\acute{E}t}(L_{X^\circ}(G),\epsilon_*(\cF)).
\end{equation}
Therefore their derived functors are isomorphic. Hence for any $n$ we have \begin{eqnarray}
H^n(\tilde{Y},L_{X^\circ}(G),\cF)=H^n_{\acute{E}t}(L_{X^\circ}(G),\epsilon_*(\cF)) \\
H^n(\tilde{X},L_{X^\circ}(G)_{an},\cF)=H^n_{an}(L_{X^\circ}(G)_{an},\epsilon_*(\cF))
\end{eqnarray}
So (\ref{edge}) becomes \begin{equation} \label{edge1} e: H^1_{\acute{E}t}( L_{X^\circ}(G), \mu_n) \ra H^1_{an}( L_{X^\circ}(G)_{an},\mu_n).
\end{equation}

\begin{prop} \label{edgeinj} The edge morphism (\ref{edge1}) (or \ref{edge}) is injective.
\end{prop}
\begin{proof} Let $c \in ker(e)$.   Let $L$ be a $\mu_n$ torsor on $\acute{E}t(L_{X^\circ}(G))$ representing $c$ by Lemma \ref{torsor1coh}. Let $L_{an}$ represent $e(c)$. We have a natural map $(F^*)_s L \ra L_{an}$ by Corollary \ref{torsorextn}. So for any open $U \in \acute{E}t(L_{X^\circ}(G))$ we have 
\begin{equation} \label{localsections}
H^0(U,L)=H^0(F^*(U),(F^*)_sL) \ra H^0(F^*(U),L_{an}).
\end{equation}
Since $e(c)=0$, so $L_{an}$ admits a global section $s^{an}$ on the analytic site. Since $L_{X^\circ}(G)_{an}$ is connected, and $s^{an}$ takes values in $\mu_n$, so it must be a  constant global section. So its restriction $s_u^{an} \in L_{an}(U)$ to any open $u: U_{an} \ra L_{X^\circ}(G)_{an}$ in the analytic site is a constant section.  Consider an arbitrary open $u: U \ra L_{X^\circ}(G)$ of the big-\'etale site. Since $s^{an}_u$ is a constant section, so in (\ref{localsections}) it may be viewed as a section $s_u \in H^0(F^*(U),(F^*)_sL)$, and thereby of $L(U)$ through $u$. So the sections $s_u$ are constant for any $u \in \acute{E}t(L_{X^\circ}G)$. So the collection $\{s_u\}$ define a  global section $s$ of $L$. So $L$ is trivial.
\end{proof}

%\begin{lem} \cite[Lemma 2.5]{knr} Let $X$ be a connected variety over $\CC$. Let $\lambda: X \ra \CC^*$ be a regular map. If $\lambda$ is null-homotopic, then it is  constant. \end{lem} So $R^1 \varprojlim H^0_{an}(Y_n,\GG_m)$ vanishes. By GAGA, we have the second assertion.
 
\begin{Cor} \label{H2BLXG}  The group $H^2_{\acute{E}t}(B L_{X^\circ} G, \GG_m)$ is finitely-generated and free abelian.
\end{Cor}
\begin{proof} We have $H^2_{\acute{E}t}(B L_{X^\circ} G, \GG_m) = H^1_{\acute{E}t}(L_{X^\circ} G, \GG_m)$ by Proposition \ref{vanBL}. Any torsion-class is represented by an element in $H^1_{\acute{E}t}(L_{X^\circ} G, \mu_n)$ for some $n$. The second group vanishes by Propositions \ref{edgeinj} and \ref{analyticcoh}.
\end{proof}

{\it For notational convenience, we will denote $H^1_{\acute{E}t}(L_{X^\circ}G,\GG_m)$ by $\Pic(L_{X^\circ} G)$.}

\section{Brauer group of the moduli stack: $G$ is simply connected} \label{bgpms}
\subsection{Parahoric torsors} \label{pt}
%Let $X$ be a smooth projective curve. We fix once for all a nonempty finite set of closed points  \begin{equation}\label{parabolicpoints} \cR \,= \,\{ x_{_j} \}_{1 \leq j \leq m}\, \subset\, X\, . \end{equation} These will play the role of parabolic points. Let $A_{_j}\,:=\, \widehat{\cO_{_{X,x_{_j}}}}$ be the complete discrete valuation ring with function field $K_{_j}\,\simeq\, \CC((t))$ and {\em residue field} $\CC$, obtained by completing the local rings $\cO_{X,x_j}$. We shall denote $\Spec(A_j)$ by $D_j$. For each $x \in \cR$ we choose a facet $\sigma_x \subset \cA$. The set of facets will be denoted by $\Omega$.  Let $\cG$ be a flat, affine group scheme on $X$ of finite type. We call $\cG$ a {\em Bruhat--Tits group scheme} with parabolic points $\cR$, if restricted to $X \setminus \cR $, it is isomorphic to the split group scheme $G \times (X \setminus \cR)$ and if $\cG$ restricted to $D_j$ is a parahoric Bruhat--Tits group scheme for each $j$. 
Let $\cG \ra X$ be a group scheme as in \S \ref{gpsch}. A {\it quasi-parahoric} torsor $\cE$ is a $\cG$--torsor on $X$. This means that $\cE \times_X \cE \simeq \cE \times_X \cG$ and we have an action map $a: \cE \times_X \cG \ra \cE$ which satisfies the usual axioms of $G$-bundles. By weights we mean elements ${\boldsymbol\theta}\,=\, \{\theta_x| x \in \cR \} \in (Y(T) \otimes \RR)^m$ where $\theta_x$ lies in the interior of the facet $\sigma_x$ (cf \S \ref{gpsch}) and $m=|\cR|$. A {\it parahoric torsor} is a pair $(\cE\, , {\boldsymbol\theta})$ consisting of  a quasi-parahoric torsor and weights.

\subsection{Uniformization}
Let $\cM_X(\cG)$ be the moduli stack of parahoric $\cG$-torsors on $X$. By \cite{heinloth} it is an algebraic stack. Recall that $\cQ_G= \prod_{x \in \cR} \cF l_{\sigma_x}$ and $X^\circ = X \setminus \cR$. A $R$-point of $\cQ_G$ classifies $\cG$-torsors on $X \times \Spec(R)$ together with a section on $X^\circ \times \Spec(R)$. The map $\cQ \ra \cM$ forgets the section and the ind-scheme $L_{X^\circ}(G)$ acts on $\cQ_G$ by changing the section.  By the Uniformization theorem (cf \cite[Heinloth]{heinloth}) we have an isomorphism of stacks
\begin{eqnarray} \label{unif}
\cQ_G/L_{X^\circ}(G) = \cM_X(\cG) \\
\cQ \times L_{X^\circ}(G) \ra \cQ \times_\cM \cQ.
\end{eqnarray}
For a stack $\cX$, $H^2_{et}(\cX,\GG_m)_{tor}$ is called the {\it cohomological Brauer group}. We wish to compute it when $\cX=\cM_X(\cG)$.

\subsection{Cohomology of sheaves on $\cM_X(\cG)$}
We begin with a generality. Let $\bY_\bullet$ be a simplicial ind-scheme and $\cX$ be an Artin stack. Let $a: \bY_\bullet \ra \cX$ be a morphism universally of cohomological descent (cf \S \ref{cohdessec}). Let us denote by $\GG_m$ the abelian sheaf defined by $\GG_m$ on the big \'etale sites of $\bY_\bullet$ and $\cX$. We have $a^*\GG_m=\GG_m$ since $a^*$ is just restriction to $\acute{E}t(\bY)$ of $\acute{E}t(\cX)$. Then by Theorem \ref{ss} we have an equality of abutments
\begin{equation} \label{eqabutment} H^{p+q}_{\acute{E}t}(\bY_\bullet,\GG_m)=H^{p+q}_{\acute{E}t}(\cX,\GG_m).
\end{equation}

For our purposes, we now specialize to the case  $p:\cQ_G \ra \cM_X(\cG)$. 

%Set $\cQ_\bullet=cosk_0(\cQ_G \ra \cM_X(\cG))$ and $\cM_\bullet$ as the constant simplicial object $ \cM_X(\cG)$. We consider the small \'etale site of  $\cM$ whose objects are $Sch/\cM$ with \'etale structural morphisms and the morphisms are \'etale maps between them. A family $(f_i: U_i \ra U)_{i \in I}$ is a covering if $f_i$ are \'etale and $U=\cup_{i \in I} f_i(U_i)$.

\begin{prop} The morphism $p$ is universally of cohomological descent.
\end{prop}
\begin{proof}  Recall that $Gr_{\cG,x}$ parametrizes $\cG$-torsors together with a  section on $X \setminus \{x\}$. By \cite[Thm 4]{heinloth}, for any $S$-family $\cP \in \cM_X(\cG)(S)$, there exists an \'etale covering $S' \ra S$ such that $\cP|_{X \setminus \{x\} \times S'}$ is trivial.   
Recall by (\ref{uniformization})   that
$
 \cQ_G= \prod_{x \in \cR} Gr_{\cG,x} = \prod_{x \in \cR} \cF l_{\sigma_x}$.
So $p$ admits \'etale local sections in the sense of Definition \ref{etlocalsection}. So by Theorem \ref{et-sec-indsch-stack}, $p$ is universally of cohomological descent.
\end{proof}

Let us abbreviate $\cQ_G$ as $\bY$ and $\cM_X(\cG)$ as $Z$.  Associated to any such projection $p$, by the coskeleton construction one has a simplicial ind-scheme augmented by $(a,Z)$. It is isomorphic to the quotient of the Bar construction (see (\ref{barcons}))
\begin{equation} EL_{X^\circ}(G)(\bY)/L_{X^\circ}(G).
\end{equation} We will abbreviate it as $\bY_\bullet$. Its set of $n$-simplices is the quotient by the left-action of $L_{X^\circ}(G)$ on $L_{X^\circ}(G)^{\times n} \times \bY$ for $n \geq 0$. We have a simplicial map
\begin{equation}
p_\bullet: \bY_\bullet \ra BL_{X^\circ}(G)_\bullet,
\end{equation}
 $p_\bullet$ is defined by projecting on the $L_{X^\circ}(G)$-part.
The Grothendieck spectral sequence for $\Gamma_{\bY}=\Gamma_{BL_{X^\circ}(G)} \circ p_{\bullet,*}$ with values in a simplicial sheaf $\cF^\bullet$ on $\bY_\bullet$ has the form
\begin{equation}
E^{p,q}_2 = H^p_{\acute{E}t}(BL_{X^\circ}(G)_\bullet,R^qp_{\bullet,*} (\cF^\bullet)) \implies H^{*}_{\acute{E}t}(\bY_\bullet,\cF^\bullet),
\end{equation}
(cf \cite[page 10 (1.9) and page 27 (5.5)]{te} for similar result). 
%We can identify $R^q p_{\bullet,*}(\cF^\bullet)$ with $H^q(\bY_0,\cF^0)$ and $\bY_0=\bY$.
Combining with (\ref{eqabutment}) and taking $\cF^\bullet$ as the sheaf defined by $\GG_m$, we get 
\begin{equation} \label{sstele}
E^{p,q}_2 = H^p_{\acute{E}t}(BL_{X^\circ}(G)_\bullet,R^qp_{\bullet,*}(\GG_m)) \implies H^{*}_{\acute{E}t}(\cM_X(\cG),\GG_m).
\end{equation}

\subsection{Big-\'etale site and line bundles on sites and ind-schemes}
 For the case of schemes, by \cite[Prop III.3.1]{milne} for an abelian sheaf the cohomology on the big and small \'etale sites agree. Let $A \ra \cM$ be  an atlas of an algebraic stack $\cM$. Let $A^{\times_{\cM} p}$ denote the $p$-fold fiber product of $A$ over $\cM$. We have spectral sequences \begin{equation} \label{twospec} \xymatrix{ E^{p,q}_1=H^q_{\acute{e}t}(A^{\times_{\cM} p},\GG_m ) \ar[d] & \implies & H^n_{\acute{e}t}(\cM,\GG_m) \ar[d] \\ E^{p,q}_1=H^q_{\acute{E}t}(A^{\times _{\cM} p},\GG_m) & \implies & H^n_{\acute{E}t}(\cM,\GG_m) }
\end{equation}
which fit vertically by  the natural homomorphism from the small to big \'etale groups.
So cohomologies on the big-\'etale and small sites agree for algebraic stacks too. The following proposition proves a similar result for ind-projective varieties like $\cF l_{\sigma}$.
\begin{prop} \label{bigetalelinebundles} Let $\sigma$ be a facet of $\mathbf{a}$. We have $H^1_{\acute{E}t}(\cF l_{\sigma}, \GG_m)=\Pic(\cF l_{\sigma})$.
\end{prop}
\begin{proof} For simplicity, we explain the case of $\sigma=\mathbf{a}$. The more general case can be proven by a very similar argument. Recall that $\cF l_{\mathbf{a}}=\varinjlim_{w \in \tilde{W}} S_w$ where $\tilde{W}$ is the Iwahori-Weyl group. We revisit the setup of the proof of Prop \ref{cohvan}. By the Grothendieck spectral sequence \ref{schubertoflagvariety}, we get
$$0 \ra R^1 \varprojlim H^0_{\acute{E}t}(S_w, \GG_m) \ra H^1_{\acute{E}t}(\cF l_{\mathbf{a}},\GG_m) \ra \varprojlim H^1_{\acute{E}t} (S_w, \GG_m) \ra R^2 \varprojlim H^0_{\acute{E}t}(S_w, \GG_m)
$$
on the big-\'etale site.
Now since $H^0_{\acute{E}t}(S_w, \GG_m)=H^0(S_w,\cO_{S_w}^\times)=\CC^\times$, so the first and last terms vanish. So the middle arrow is an isomorphism.  Now by \cite[II Theorem(4.3.1)]{tamme}, there is a canonical isomorphism $H^1_{\acute{e}t}(S_w, \GG_m) \simeq \Pic(S_w)$ where $\Pic(S_w)$ is the usual Picard group $H^1_{Zar}(S_w,\cO_{S_w}^*)$. Since the Iwahori-Weyl group $\tilde{W}$ equals the affine Weyl group $W_a$ for us, so $\varprojlim \Pic(S_w)= \oplus_{i \in \mathbf{S}} \Pic(S_{w_i})$ where $w_i$ is the simple reflection corresponding to the affine simple root $\alpha_i \in \mathbf{S}$. So we can conclude by \cite[Prop 10.1]{pradv} according to which $\Pic(\cF l_{\mathbf{a}})=\oplus_{i \in \mathbf{S}} \Pic(S_{w_i})$.
\end{proof}

\subsection{The case of one parabolic point and line bundles} \label{gagsigma}
In this subsection, let $x$ be the unique parabolic point, let $\sigma^\alpha$ for $\alpha \in \mathbf{S}$ be the facet of $\mathbf{a}$ where {\it only $\alpha$ does not vanish}. Thus the case $\sigma^{\alpha_0}=v_0$ corresponds to the parahoric group $G(\cO)$. For simplicity we will just write $\sigma$ for a zero-dimensional facet. 

\subsubsection{Construction of $\cG^{\mathbf{a}} \ra X$.} \label{constructiongpscheme} For any $\sigma$, we choose an alcove  $\mathbf{a}^x$ such that $\sigma$ lies in its closure. So we have a natural map $\cG_{\mathbf{a}^x} \ra \cG_{\sigma}$ over $\mathbb{D}_x$. Recall that we have assumed in (\S \ref{gpsch}) that the gluing functions $\{f_x\}$ lie in $Mor(\mathbb{D}^\circ_x,G)=G(K_x)$ where $K_x$ is the quotient field of $\hat{\cO_x}$.

As in the introduction, let us agree to denote by $\cG^\sigma$  the Bruhat-Tits group scheme on $X$ which restricts to  $\cG_\sigma \ra \mathbb{D}_x$  and $X^\circ \times G$  respectively and is obtained by gluing through $f_x$. Let $\cG^{\mathbf{a}}$ denote the group scheme which restricts to $\cG_{\mathbf{a}^x} \ra \mathbb{D}_x$ and $X^\circ \times G$  respectively and is constructed by gluing through $f_x$. {\it  So the natural map $\cG_{\mathbf{a}^x} \hra \cG_\sigma$ over $\Spec(\hat{\cO_x})$ extends to $\cG^{\mathbf{a}} \hra \cG^{\sigma}$ over $X$.} The group scheme $\cG^{\mathbf{a}}$ depends on our choices of $f_x$ and $\mathbf{a}^x$, but in this paper we will only need its existence. We have a morphism of stacks $\cM_X(\cG^{\mathbf{a}}) \ra \cM_X(\cG)$. 

\subsubsection{Construction of $\cM_X(\cG^{\mathbf{a}}) \ra \cM_X(G)$} \label{constructionstacks} While there is not necessarily a morphism of group schemes $\cG^{\mathbf{a}} \ra X \times G$, let us construct a morphism of algebraic stacks $\cM_X(\cG^{\mathbf{a}}) \ra \cM_X(G)$ whose fibers are full flag varieties $G/B$. We will need only its existence in the rest of the paper.

Let us construct a morphism of algebraic stacks $\cM_X(\cG^{\mathbf{a}}) \ra \cM_X(G)$. For each $x \in \cR$, let $w_x$ be an element in the affine Weyl group $W_a$ which maps $\mathbf{a}_0$ (cf \S \ref{lgpthedata}) to $\mathbf{a}^x$ . Set $v_x = w_x v_0$. Let $N$ denote the normalizer of $T$ in $G$. We choose an element $n_x \in N(K_x)$ which maps to $w_x$ where $K_x$ is the quotient field of $\hat{\cO_{X,x}}$. We shall view $n_x$ as an element in $G(K_x)$.

 Let $E \ra X$ be a principal $G$-bundle obtained by gluing the trivial bundles on $X^\circ$ and $\{\mathbb{D}_x\}_{x \in \cR}$ by $\{n_x f_x\}_{x \in \cR}$. Let $Ad(E) \ra X$ denote the adjoint group scheme of $E$. Then $Ad(E)$ is obtained by gluing the constant group scheme $X^\circ \times G$ with $\cG_{v_0}$ by $\{n_x f_x\}$. Thus $Ad(E)$ is obtained by gluing $X^\circ \times G$ with $\cG_{v_x}$ via $\{f_x\}$.
Since the group scheme $\cG^\mathbf{a}$ is obtained by gluing $X^\circ \times G$ and $\mathcal{G}_{\mathbf{a}^x}$ via $\{f_x\}$, and we have natural morphisms $\cG_{\mathbf{a}^x} \hra \cG_{v_x}$, so we obtain a natural map of group schemes $\cG^{\mathbf{a}} \ra Ad(E)$. This also furnishes \begin{equation} \label{phif} \phi_f: \cM_X(\cG^{\mathbf{a}}) \ra \cM_X(\Aut(E)).
\end{equation} 

 By Proposition \ref{redtoIwa}, its fibers are isomorphic to $G/B$. The principal bundle $E$ is a left $\Aut(E)$-torsor and right $G$-torsor on $X$. We have an isomorphism of stacks 
\begin{equation} \label{mapofstacksbasepoint} \mu_E: \cM_X(\Aut(E)) \ra \cM_X(G)
\end{equation} which sends a right $\Aut(E)$-torsor $\cF$ to the principal $G$-bundle $\cF \times_{\Aut(E)} E$. Here $\cF \times_{\Aut(E)} E$ denotes the space where   for local sections $f, g, e$ of $\cF$, $\Aut(E)$ and $E$ respectively we identify $(e g, g^{-1} f)$ with $(e,f)$. Its inverse is given by sending $F$ to $F \times_G E^{op}$. Here $E^{op}$ has the same underlying space as $E$ but for local section $e$ of $E^{op}$, $e.g$ is defined to be $e g^{-1}$ after viewing it as a local section of $E$. Thus we obtain a morphism of stacks with desired properties:
\begin{equation} \label{mapfromiwahoritoG}
\mu_E \circ \phi_f: \cM_X(\cG^{\mathbf{a}}) \ra \cM_X(\Aut(E)) \ra \cM_X(G).
\end{equation}

To prove our main theorem, we begin relating line bundles on $\cM_X(\cG^\sigma)$ with those on $\cM_X(\cG^{\mathbf{a}})$. To this end, we begin by reconciling our references 
\cite{faltings} and \cite{heinloth} on the one hand and \cite{BHf1} on the other because  we want to use  formulation \cite[Theorem 4.2.1]{BHf1} of \cite[Theorem 17]{faltings} and a similar formulation of \cite[Thm 7]{heinloth}.

Let us recall that the references \cite[Faltings]{faltings} and \cite[Heinloth]{heinloth} work with rigidified line bundles i.e line bundles whose restriction to the trivial torsor for moduli stacks and on the trivial coset for affine grassmannians becomes trivial. Let us make this more precise. Let $L$ be a line bundle on a stack $\cX \ra S$ together with a section $s: S \ra \cX$ and $L_s$ denotes the restriction of $L$ to $S$ through $s$. Then a rigidification of $L$ is a choice of a trivialization $\alpha: \cO_S \simeq L_s$. These references work over a connected noetherian base scheme $S$ and the curve $C \ra S$ is smooth projective and absolutely irreducible.
In particular, the moduli stack $\cM_{C \ra S}(G)$ of principal $G$-bundles is fibered over $S$. Therefore let us note that line bundles on $\cM$ are not rigidifiable automatically.

Now we recall the set up of \cite{BHf1}. Recall that for us (cf \S \ref{redcokerwt}) $\Pic(?)$ denotes the group of isomorphism classes of line bundles.
Recall if $\cX/k$ is an algebraic stack, then the Picard functor (cf \cite[Definition 2.1.1]{BHf1}) $\ul{\Pic}(\cX)$ is the functor that to a scheme $T$ of finite type over a field $k$ associates the group $\Pic(\cX \times T)/pr_2^*\Pic(T)$. If $\ul{\Pic}(\cX)$ is the constant presheaf given by an abelian group $A$, then following \cite{BHf1} we shall say that $\ul{\Pic}(\cX)$ is {\it discrete} and simply denote $\ul{\Pic}(\cX) \simeq A$.
These definitions naturally generalize to the case of ind-schemes also. For the various stacks $\cX$ and ind-schemes $\bX$ of interest to us, the Picard functor $\ul{\Pic}(\cX)$ (or $\ul{\Pic}(\bX)$) will be a finitely generated free abelian group hence discrete, the base space $S$ will be $\Spec(\CC)$ and the section $s: S \ra \cX$ will be given by the trivial torsor or the identity coset. Since $S=\Spec(\CC)$ so all line bundles on $\cX/S$ are rigidifiable. So when $\ul{\Pic}(?)$ is discrete, then the constant presheaf defined by $\Pic(?)$ on the category of schemes and $\ul{\Pic}(?)$ are isomorphic. There is a natural forgetful map from rigidified line bundles to isomorphism classes of line bundles and further, in this case,  once we choose arbitrary rigidifications for any set of generators of $\ul{\Pic}(\cX)$, then they determine a compatible choice of rigidifications on every line bundle in $\ul{\Pic}(\cX)$. Henceforth we choose once for all arbitrary rigidifications for a set of generators of $\ul{\Pic}(\cX)$. Moreover, we will consider line bundles upto rigidifications in the following sense. 

Let us choose a uniformizer $z \in \hat{\cO}_{X,x}$. Recall that we have a map $glue_{x,z}: \cF l_{\sigma} \ra \cM_X(\cG^\sigma)$ that on a coset $f L^+ \cG_{\sigma}$ glues the trivial $G$-torsor on $X \setminus \{x\}$ with the trivial $\cG_{\sigma}$-torsor on $\Spec(\hat{\cO}_{X,x})$. 

Let $Gr_G$ denote $LG/L^+G$. Let $G$ be simply-connected and simple. We will use \cite[Theorem 4.2.1]{BHf1} formulation of \cite[Theorem 17]{faltings}: we have  $\ul{\Pic}(Gr_G) = \ZZ$ and $glue_{x,z}^*: \ul{\Pic}(\cM_G) \ra \ul{\Pic}(Gr_G)$ is an isomorphism of functor.
Similarly, denoting isomorphism classes of line bundles by $\Pic(?)$, by $c$ the homomorphism given by central charge (cf \ref{centralcharge}), by $\cG_z$ the reduction of the group scheme to the closed fiber at $z$, by \cite[Thm 7]{heinloth} we have the exact sequence 
\begin{equation} \label{picseq}
0 \ra \prod_{z \in \cR} \XX^*(\cG_z) \ra \Pic(\cM_X(\cG)) \stackrel{c}{\ra} \ZZ \ra 0.
\end{equation}
 
Let us emphasize that in the reference \cite[Thm 7]{heinloth}, the middle term above is the Picard group of rigidifiable bundles.

\begin{prop} \label{pullbackalcove} Pull-back under $ \cF l_{\mathbf{a}} \stackrel{q_{\mathbf{a}}}{\lra} \cM_X(\cG^{\mathbf{a}})$ establishes an isomorphism 
\begin{equation} \label{pullbackiso} q_{\mathbf{a}}^*: \Pic(\cM_X(\cG^{\mathbf{a}}) \ra \Pic(\cF l_{\mathbf{a}})=\oplus_{ \alpha \in \mathbf{S}} \ZZ L_{\epsilon_{\alpha}}.
\end{equation} 
\end{prop}

\begin{proof}  For $q_{\mathbf{a}}$ from (\ref{sstele}) the terms 
$ 0 \ra E^{1,0}_2 \ra E^1_\infty \ra E^{0,1}_2$ work out to 
\begin{eqnarray*}
0 \ra H^1(BL_{X^\circ} G_\bullet, \GG_m) \ra H^1(\cM_X(\cG^{\mathbf{a}}), \GG_m) \ra  H^0(BL_{X^\circ} G_\bullet, H^1(\cF_{\mathbf{a}}, \GG_m))  .
\end{eqnarray*}
where all cohomology groups are computed in the big-\'etale topology. By Proposition \ref{vanBL}, we have $H^1(BL_{X^\circ} G_\bullet, \GG_m)=0$. Since $L_{X^\circ} G$ is connected, so the third group is just $H^1(\cF_{\mathbf{a}},\GG_m)$. So we have an injective map $H^1(\cM_X(\cG^{\mathbf{a}}), \GG_m) \ra H^1(\cF_{\mathbf{a}},\GG_m)$.

 The ample generator  of $\Pic(\cM_X(G))$ has central charge one (cf \cite[Theorem 17]{faltings}). It may be written as $L_{\sigma^{\alpha_0}}$ in our notations. Let $L_{\sigma^{\alpha_0}}(\mathbf{a})$ denote its pull-back under $\cM_X(\cG^\mathbf{a}) \rightarrow \cM_X(G)$ (\ref{mapfromiwahoritoG}). The central charge does not change under $\mu_E$ (cf \ref{mapofstacksbasepoint}) because it is an isomorphism of stacks. It also does not change under $\phi_f$ (cf \ref{phif})
 by \S \ref{centralchargestack}. So $L_{\sigma^{\alpha_0}}(\mathbf{a})$  also has central charge one. Taking $\cG=\cG^\mathbf{a}$, the sequence (\ref{picseq}) is split by mapping $1 \in \ZZ$ to $L_{\sigma^{\alpha_0}}(\mathbf{a})$. 
 
Consider $\cG_{\mathbf{a}} \ra G_A$. At the closed fiber, the image of $\cG_{\mathbf{a}} \otimes k$ in $G$ is $B$. So we may view a weight $\omega$ of $G$ as a character on $\cG^{\mathbf{a}}_x$ via 
\begin{equation} \cG^{\mathbf{a}}_x = \cG_{\mathbf{a}} \otimes k \ra B \ra T \stackrel{\omega}{\ra} \GG_m.
\end{equation} Let $L_\omega$ be the corresponding line bundle on $\cM_X(\cG^\mathbf{a})$ via (\ref{picseq}). Let $L_{\tilde{\omega}}$  denote the pull-back of $L_\omega$ via $\cF l_{\mathbf{a}} \ra \cM_X(\cG^{\mathbf{a}})$. By \cite[Theorem 4.2.1]{BHf1} we see that $L_{\sigma^{\alpha_0}} \ra \cM_X(G)$ pulls back to the generator of $Gr_G=\cF l_{\sigma^{\alpha_0}}$. This line bundle  pulls back to $L_{\epsilon_0} \ra \cF l_{\mathbf{a}}$ (cf \cite[(2.2.6)]{zhu}). By (\ref{epsilon-reln}) it follows that on $\cF l_{\mathbf{a}}$ we have an isomorphism 
 \begin{equation} \label{decomplineb}
L_{\epsilon_\alpha} \simeq L_{\tilde{\omega}_\alpha} \otimes L_{\epsilon_0}^{ a_\alpha^\vee}.
\end{equation} 
 This isomorphism together with the splitting of (\ref{picseq}) imply that an isomorphism is induced in (\ref{pullbackiso}) by the pull-back map that sends: $L_\omega \ms L_{\tilde{\omega}}$ and $L_{\sigma^{\alpha_0}}(\mathbf{a}) \ms L_{\epsilon_0}$.

\end{proof}

\begin{prop}  \label{redtoIwa} Let $\sigma$ be any facet. Let $\mathbf{a}$ be an alcove such that $\sigma$ lies in its closure.  Consider the diagram
\begin{equation} 
\xymatrix{
\cF l_\mathbf{a} \ar[r]_p \ar[d]^{q_\mathbf{a}} & \cF l_\sigma \ar[d]^{q_{\sigma}} \\
\cM_X(\cG^\mathbf{a}) \ar[r]^\pi & \cM_X(\cG^\sigma)
}
\end{equation} 
Then the diagram
\begin{equation} \label{cartesiansquare}
\xymatrix{
\Pic(\cF l_\mathbf{a})   & \Pic(\cF l_\sigma) \ar[l]^{p^*} \\
\Pic(\cM_X(\cG^\mathbf{a}))  \ar[u]^{q_{\mathbf{a}}^*} & \Pic(\cM_X(\cG^\sigma) \ar[u]^{q^*_\sigma} \ar[l]_{\pi^*})
}
\end{equation}
has all arrows injective and is a pull-back square. Further, we have $p_* \GG_m=\GG_m$ and $\pi_* \GG_m=\GG_m$. Lastly, let $G^\sigma$ denote the reductive quotient of $\cG_\sigma \otimes k$ and $F^\sigma$ its full flag variety. The sheaves $R^1p_* \GG_m$ and $R^1 \pi_* \GG_m$ are the trivial local systems with fibers isomorphic to $\Pic(F^\sigma)$.
\end{prop}
\begin{proof} By aruging exactly as in Proposition \ref{pullbackalcove}, we get that $q^*_\sigma$ is injective. By the uniformization theorem (cf (\ref{unif}) \cite{heinloth}, it follows that (\ref{redtoIwa}) is cartesian. 

Here the fibers of the horizontal maps are  isomorphic to the $k$-scheme given by $\cG_\sigma \otimes k/ \Img (\cG_\mathbf{a} \otimes k \rightarrow \cG_\sigma \otimes k)$. By Corollary \ref{borelpar}, this is a flag variety $F^\sigma$ of  $G^\sigma:=\cG_\sigma/\cG_\sigma^u$. One can see that $F^\sigma$ is the full flag variety by Corollary \ref{borelpar} and its proof as follows. Firstly, no affine root $\alpha$ takes an integral value on $\mathbf{a}$; so all inequalities in (\ref{Gsb}) are strict because, by definition, for $\alpha \in Y_\sigma$ we have $\alpha(\sigma)=0$ and no root vanishes on $\mathbf{a}$. Thus viewing $Y_\sigma$ as the root-system of $\cG_\sigma/\cG^u_\sigma$, the set $G_{\sigma,\mathbf{a}}$ (cf (\ref{Gsb})) has exactly one of $\alpha$ or $-\alpha$. So $\cG_{\mathbf{a}}/\cG_{\sigma}^u$ is a Borel subgroup of $G^\sigma$. 

Now $LG \ra \cF l_\sigma=LG/L^+\cG_{\sigma}$ has \'etale local sections by Theorem \ref{etlocsec}. So \'etale locally, the morphisms $p$ and $\pi$ are $F^\sigma$-fibrations. In particular, $\pi$ and $p$ are faithfully flat.
Let us show that the diagram (\ref{cartesiansquare}) is a pull-back square. Take any line bundle $L \in \Pic(\cF l_{\sigma})$. By Proposition \ref{pullbackalcove},  it suffices to show that the natural morphism $q_{\sigma}^* q_{\sigma,*} L \ra L$ of sheaves on $\cF l_{\sigma}$ is an isomorphism. This can be checked on any faithfully flat covering. So we will check that
\begin{equation} \label{check} \theta: p^* q_{\sigma}^* q_{\sigma,*} L \ra p^* L  
\end{equation} is an isomorphism. We first prove the following lemma.

%\begin{proof} Let $v_2: V_2 \ra \cF l_{\mathbf{a}}$ be an arbitrary open on the big-\'etale site of $\cF l_{\mathbf{a}}$. Consider the cartesian square\begin{equation}\xymatrix{V_2' \ar[d]^{v_2'} \ar[r]^{\alpha} & V_2 \ar[d]^{p \circ v_2} \\\cF l_{\mathbf{a}} \ar[r]^{p} & \cF l_{\sigma}}\end{equation}The map $\alpha$ is faithfully flat. Now $\theta$ restricted to $v_2$ is an isomorphism if and only if it is so restricted to $v_2'$. So without loss of generality, we may suppose that an open $v_1: V_2 \ra \cF l_{\sigma}$ fits below $v_2$ making $v_2$ the cartesian pull-back of $v_1$ by $p$. Consider the diagram\begin{equation} \xymatrix{V_2 \ar[rr]_p \ar[dd] \ar[rd] && V_1 \ar[rd] \ar[dd] & \\ & \cF l_{\mathbf{a}} \ar[rr]_p \ar[dd]^{q_\mathbf{a}} && \cF l_{\sigma} \ar[dd]^{q_{\sigma}} \\U_2 \ar[rr] \ar[rd] && U_1 \ar[rd] & \\& \cM_2 \ar[rr]^\pi && \cM_1} \end{equation} Here the front square is (\ref{cartesiansquare}). The two side squares are from the diagram (\ref{pushforward}). Consider the back square \begin{equation}\xymatrix{V_2 \ar[r]_p \ar[d]^{q_{\mathbf{a}}} & V_1 \ar[d]^{q_{\sigma}} \\U_2 \ar[r]^{\pi} & U_1} \end{equation} Following the convention of labeling arrows on the $V$'s and $U$'s in (\ref{pushforward}), we have labelled them as in (\ref{cartesiansquare}). It is cartesian. 
\begin{lem} \label{pushforwardflatbasechangeindscheme} Let $\bX$ be an ind-scheme, say $\varinjlim_{n \in I} X_n$. Consider $a: \bX \ra \cX$ as in the diagram (\ref{pushforward}).  Let $\cG \in \bX_{\acute{E}t}$ be a quasi-coherent sheaves on $\bX$. Let $\cG_n$ be the restriction of $\cG$ on $X_n$. Suppose that there is a finite subcategory $I' \subset I$ such that $\cG= \varprojlim_{n' \in I'} \cG_{n'}$.  Then for any flat $u: U \ra \cX$, consider the cartesian square
\begin{equation}
\xymatrix{
\bX_u \ar[r]_{u'} \ar[d]^{a'} & \bX \ar[d]^{a} \\
U \ar[r]^u & \cX
}
\end{equation}
The canonical map $u^* a_* \cG \ra a'_* u'^* \cG$ is an isomorphism.
\end{lem}
\begin{proof}  Consider the cartesian squares
\begin{equation}
\xymatrix{
X_{n,u} \ar[r]_{u'_n} \ar[d]^{a'} & X_n \ar[d]^a \\
U \ar[r]^{u} & \cX
}
\end{equation}
So $\bX_u=\varinjlim_{n \in I} X_{n,u}$.  By definition of the site of ind-schemes and sheaves on it, $\cG$  is determined by its restrictions $\{ \cG_n|n \in I\}$ on $X_n$ for $n \in I$. Further we have
\begin{equation*}
(u'^* \cG)_n = u'^*_n (\cG_n),
\quad
a_* \cG= \varprojlim a_* \cG_n, \quad a'_* (u'^*\cG)= \varprojlim a'_* (u'^* \cG)_n = \varprojlim a'_* (u'^*_n \cG_n).
\end{equation*}
Since pushforward commutes with flat base-changes, we have the isomorphism 
\begin{equation} \label{pushforwardflat} u^* a_* \cG_n \ra a'_* u'^*_n \cG_n
\end{equation}  Therefore $u^* a_* \cG \ra  u^* a_* \varprojlim_{I'} \cG_n \ra u^* (\varprojlim_{I'} a_* \cG_n)  \ra \varprojlim_{I'} u^* a_* \cG_n \ra \varprojlim_{I'} a'_* u'^*_n \cG_n$
\begin{equation*}
   \ra a'_* \varprojlim_{I'} u'^*_n \cG_n \ra a'_* \varprojlim_{I'} (u'^* \cG)_n \ra a'_* u'^* \cG.
\end{equation*}
All arrows above are isomorphisms: the first and last by hypothesis,
the fourth  by (\ref{pushforwardflat}) and the remaining because pushforward commutes with arbitrary projective limits and pull-back by finite projective limits. The last two assertions follow since $(a^*,a_*): \bX_{\acute{E}t} \ra \cX_{\acute{E}t}$ (cf (\ref{mortopoiindschemestack})) and $(u^*,u_*): U_{\acute{E}t} \ra \cX_{\acute{E}t}$ are morphisms of topoi. So pushforward commutes with projective limits and pull-backs with finite projective limits.
%Consider an arbitrary $\theta: V \ra \bX_u$: \begin{equation}\xymatrix{ V \ar@/^1pc/[rr]_{v_2} \ar[r]_{\theta} \ar[rd]_{a_2} & \bX_u \ar[r]_{u_1} \ar[d]^{a_1} & \bX \ar[d]^{a} \\ & U \ar[r]^{u} & \cX } \end{equation} By definition (\ref{pushforwardindschemestack}), we have a natural map $a_*(\cG)_u \ra a_* \cG_v$.

\end{proof} 
By the proof of Proposition \ref{bigetalelinebundles}, the hypothesis of Lemma \ref{pushforwardflatbasechangeindscheme}  hold for line bundles on $\cF l_{\sigma}$. Indeed, we proved the following isomorphisms  $$H^1_{\acute{E}t}(\cF l_{\mathbf{a}},\GG_m) \ra \varprojlim H^1_{\acute{E}t}(S_w,\GG_m) \ra \varprojlim \Pic(S_w) = \oplus_{i \in \mathbf{S}} \Pic(S_{w_i}),$$
and therefore any line bundle on $\cF l_{\sigma}$ is determined by its restriction to any Schubert variety that contains all $S_{w_i}$ for $w_i \in \mathbf{S}$. So since $\pi$ is flat, we have by Lemma \ref{pushforwardflatbasechangeindscheme}
\begin{equation}
\pi^* q_{\sigma,*} L \stackrel{\simeq}{\ra} q_{\mathbf{a},*} p^* L.
\end{equation}
an isomorphism of sheaves. Applying $q_{\mathbf{a}}^*$, this gives $
p^* q_{\sigma}^* q_{\sigma,*} L = q_{\mathbf{a}}^* \pi^* q_{\sigma,*} L  \ra q_{\mathbf{a}}^* q_{\mathbf{a},*} p^* L \stackrel{adj}{\ra} p^*L$. The adjunction arrow $adj: q_{\mathbf{a}}^* q_{\mathbf{a},*} \ra \Id$ is an isomorphism because  all line bundles on $\cF l_{\mathbf{a}}$ descend to $\cM_X(\cG^{\mathbf{a}})$ by Proposition \ref{pullbackalcove}. So we have checked that $\theta$ in (\ref{check}) is an isomorphism.

Consider the \'etale local $F^\sigma$ fibration $p: \cF l_{\mathbf{a}} \ra \cF l_{\sigma}$ and an arbitrary open $u:U \ra \cF l_{\sigma}$. Let $(\cF l_{\mathbf{a}})_u:= U \times_{\cF l_{\sigma}} \cF l_{\mathbf{a}}$. Then $(\cF l_{\mathbf{a}})_u\ra U$ is a $F^\sigma$ fibration represented by a scheme. Further it is the final object of the category:
\begin{equation}
\xymatrix{
V \ar[r] \ar[d] & \cF l_{\mathbf{a}} \ar[d]^{p} \\
U \ar[r]^{u} & \cF l_{\sigma}
}
\end{equation}
of objects over $u$ together with a morphism to $\cF l_{\mathbf{a}}$ as in the above diagram. By Definition (\ref{pushforwardindschmes}) we have $(p_* \cO)_u=\varprojlim p_* \cO_V$. The inverse system reduces to $p_* \cO_{(\cF l_{\mathbf{a}})_u}$. So it identifies with $\cO_U$. Thus $p_* \cO=\cO$ on the big-\'etale site. Similarly $\pi_* \cO=\cO$ because $\pi$ is an \'etale local $F^\sigma$ fibration.

So $\pi_* \GG_m=\GG_m$ and $p_* \GG_m=\GG_m$ {\it on the big-\'etale sites}. Thus at closed points the  fibers of $R^1 \pi_* \GG_m$ and $R^1 p_* \GG_m$  are  isomorphic to $\Pic(F^\sigma)$. This isomorphism can be made canonical by the following observation. Let $y: \Spec(\CC) \ra \cM_X(\cG^{\sigma})$ be an arbitrary closed point. Choose any isomorphism $\theta$ of the fiber $\pi^{-1}(y)$  with $F^\sigma$ as left-$G^\sigma$ homogenous spaces. Since $G^\sigma$ is connected and $\Pic(F^\sigma)$ is discrete, so $\Pic(\theta): \Pic(\pi^{-1}(y) \ra \Pic(F^\sigma)$ is indpendent of  $\theta$. Since $p$ and $\pi$ are \'etale locally $F^\sigma$-fibrations and $\CC$-points are dense, so this shows that $R^1 p_* \GG_m$ and $R^1 \pi_* \GG_m$ are the trivial local systems.
\end{proof}

\begin{Cor} \label{pullbacktogen} When $\sigma$ is a vertex of $\mathbf{a}$, we have $\Pic(\cM_X(\cG^\sigma))=\ZZ$.
 Under $\pi: \cM_X(\cG^\mathbf{a}) \ra \cM_X(\cG^\sigma)$, the pull-back map corresponds to $\ZZ L_{\epsilon_\alpha} \hra \oplus_{ \alpha \in \mathbf{S}} \ZZ L_{\epsilon_{\alpha}}$.
\end{Cor}

Recall that $G$ is a simply-connected and semi-simple group. 

\subsection{The case $\cR=\{x\}$ and the facet $\sigma$ is alcove $\mathbf{a}$}

\begin{thm} \label{mtstackalcove}  The cohomological Brauer group $H^2_{\acute{E}t}(\cM_X(\cG^{\mathbf{a}}),\GG_m)_{tor}=0$.
\end{thm}
\begin{proof}    For $\cF l_{\mathbf{a}} \stackrel{q_{\mathbf{a}}}{\ra} \cM_X(\cG^{\mathbf{a}})$ from the spectral sequence (\ref{sstele}) we deduce 
$$ 0 \ra E^{1,0}_2 \ra E^1_\infty \ra E^{0,1}_2 \ra E^{2,0}_2 \ra ker(E^2_\infty \ra E^{0,2}_2) \ra E^{1,1}_2 \ra \cdots$$
(cf \cite[page 371, Cor 3.2]{merkur}).  Thus we have
\begin{eqnarray*}
0 \ra H^1(BL_{X^\circ} G_\bullet, \GG_m) \ra H^1(\cM_X(\cG^{\mathbf{a}}), \GG_m) \stackrel{\theta}{\ra}  H^0(BL_{X^\circ} G_\bullet, H^1(\cF l_{\mathbf{a}}, \GG_m))  \\ \ra H^2(BL_{X^\circ} G_\bullet, \GG_m) \ra \ker[H^2(\cM_X(\cG^{\mathbf{a}}), \GG_m)  \stackrel{\alpha}{\ra}  H^0(BL_{X^\circ} G_\bullet, H^2(\cF l_{\mathbf{a}},\GG_m))] \\ \ra H^1(BL_{X^\circ} G_\bullet, H^1(\cF l_{\mathbf{a}},\GG_m)) \ra \dots .
\end{eqnarray*}
where all cohomology groups are computed in the big-\'etale topology. Let us mention some simplifications. Now $H^1(BL_{X^\circ} G_\bullet, \GG_m)=0$ by Proposition \ref{vanBL}. Since $L_{X^\circ} G$ is connected and $H^i(\cF l_{\mathbf{a}},\GG_m)$ is discrete for $i=1,2$, so $L_{X^\circ} G$ acts trivially. Also from Prop \ref{vanBL}(1) we get $ H^1(BL_{X^\circ} G_\bullet, H^1(\cF l_{\mathbf{a}}, \GG_m))=0$. Using Corollary \ref{H2BLXG} we denote $H^2(BL_{X^\circ} G_\bullet, \GG_m)$ by $\Pic(L_{X^\circ}G)$.
So the sequence simplifies to 
\begin{equation} \label{Mercurevsimplified}
0 \ra \Pic(\cM_X(\cG^{\mathbf{a}})) \stackrel{\theta}{\ra} \Pic(\cF l_{\mathbf{a}}) \ra \Pic(L_{X^\circ} G) \ra \ker(H^2(q_{\mathbf{a}})) \ra 0
\end{equation}
where we denote $H^2(q_{\mathbf{a}}): H^2(\cM_X(\cG^{\mathbf{a}}),\GG_m) \ra H^2(\cF l_{\mathbf{a}},\GG_m)$. We have the inclusion $ H^2_{\acute{E}t}(\cM_X(\cG^{\mathbf{a}}), \GG_m)_{tor} \subset \Ker(H^2(q_{\mathbf{a}}))_{tor}$ since  $H^2(\cF l_{\mathbf{a}},\GG_m)_{tor}=0$ by Proposition \ref{cohvan}. So to prove the proposition, it suffices to show that $\ker(H^2(q_{\mathbf{a}}))_{tor}=0$. The morphism $\theta$ is the pull-back $q_{\mathbf{a}}^*$ of Prop \ref{pullbackalcove}. So $\theta$ is an isomorphism. 
 Now $\Pic(L_{X^\circ} G)$ equals $H^2(BL_{X^\circ} G_\bullet,\GG_m)$ which is torsion-free by Corollary \ref{H2BLXG}.   

\end{proof}

\subsection{The case $\cR=\{x\}$ and arbitrary facet $\sigma$}
\begin{thm} \label{mtstack}  The cohomological Brauer group $H^2_{\acute{E}t}(\cM_X(\cG^{\sigma}),\GG_m)_{tor}=0$.
\end{thm}
\begin{proof} On the big-\'etale site, the Leray spectral sequence for $p$  with values in $\GG_m$  gives the exact sequence $0 \ra H^1(\cF l_\sigma,p_* \GG_m) \ra H^1(\cF l_{\mathbf{a}}, \GG_m) \ra H^0(\cF l_{\sigma}, R^1 p_* \GG_m) \ra H^2(\cF l_{\sigma}, \GG_m) \ra H^2(\cF l_{\mathbf{a}}, \GG_m)$. Since $p_* \GG_m=\GG_m$  by Prop \ref{redtoIwa}, the above sequence reduces to
$$0 \ra \Pic(\cF l_\sigma) \stackrel{p^*}{\ra} \Pic(\cF l_\mathbf{a}) \ra H^0(\cF l_{\sigma}, R^1 p_* \GG_m) \ra H^2(\cF l_{\sigma}, \GG_m) \stackrel{H^2(p)}{\ra} H^2(\cF l_{\mathbf{a}},\GG_m) \ra $$ 
Now $H^2(\cF l_{\sigma}, \GG_m)_{tor}=0$ by Prop (\ref{cohvan}). So $ \ker(H^2(p))$ is torsion-free.  The above sequence reduces to 
$$0 \ra \Pic(\cF l_\sigma) \ra \Pic(\cF l_\mathbf{a}) \ra H^0(\cF l_{\sigma}, R^1 p_* \GG_m) \ra  \ker(H^2(p)) \ra 0 $$ 
 We will abbreviate $\cM_X(\cG^\sigma)$ and $\cM_X(\cG^{\mathbf{a}})$ as $\cM^\sigma$ and $\cM^\mathbf{a}$. We have a similar sequence for $\pi$ as well because $H^2(\cM^{\mathbf{a}},\GG_m)_{tor}=0$ by Theorem \ref{mtstackalcove}. 
 
Therefore the two sequences can be put in exact sequences
\begin{equation*}
\xymatrix{
 \Pic(\cF l_\sigma) \ar@{^{(}->}[r]  & \Pic(\cF l_\mathbf{a}) \ar[r]^{\alpha_{\cF}}  & H^0(\cF l_{\sigma}, R^1 p_* \GG_m) \ar[r] & \ker(H^2(p)) \ar[r] & 0 \\
 \Pic(\cM^\sigma) \ar@{^{(}->}[r] \ar@{^{(}->}[u]^{q_\sigma^*} & \Pic(\cM^\mathbf{a}) \ar[r]^{\alpha_{\cM}} \ar@{^{(}->}[u]^{q_{\mathbf{a}}^*} & H^0(\cM^\sigma, R^1 \pi_* \GG_m)  \ar[u] \ar[r] & \ker(H^2(\pi)) \ar[r] \ar[u] & 0 \ar[u]
}
\end{equation*}
Since the square (\ref{cartesiansquare}) is cartesian so the images of $\alpha_{\cF}$ and $\alpha_{\cM}$ are naturally isomorphic. 
Further $R^1 p_* \GG_m$ and $R^1 \pi_* \GG_m$  are the trivial local systems by Prop \ref{redtoIwa}. So the middle arrow identifies with identity on $\Pic(F^\sigma)$. Thus $\ker(H^2(\pi)) \ra \ker(H^2(p))$ is an isomorphism.  So $\ker(H^2(\pi))$ is torsion-freew.  Now $H^2(\cM^\sigma,\GG_m)_{tor} \subset \ker(H^2(\pi))$ because  by Theorem \ref{mtstackalcove} we have $H^2(\cM^{\mathbf{a}},\GG_m)_{tor}=0$. So it follows that the cohomological Brauer group $H^2_{\acute{E}t}(\cM^\sigma,\GG_m)_{tor}=0$.
\end{proof}

\subsection{Several points with facet alcove at each of the points}
\begin{prop} \label{sevptsalcove} For each $x \in \cR$, let us choose alcoves $\mathbf{a}^x$. Let $\cG^{\mathbf{a}}$ be a group scheme that restricts to $\cG_{\mathbf{a}^x} \ra \mathbb{D}_x$ at each $x \in \cR$. Let $\cM$ denote the moduli stack with facet $\sigma_x=\mathbf{a}^x$ at each $x \in \cR$. The cohomological Brauer group of $\cM$ vanishes.
\end{prop}
\begin{proof} By repeating the construction in \S \ref{constructionstacks} for multiple points, we get a morphism $\cM_X(\cG^{\mathbf{a}}) \ra \cM_X(G)$.
Let $\sigma_0^x$ be the unique vertex of $\mathbf{a}^x$ corresponding to the vertex of $\mathbf{a}$ where only $\alpha_0$ does not vanish. We have the cartesian square
\begin{equation}
\xymatrix{
\prod_{x \in \cR} \cF l_{\mathbf{a}^x} \ar[r]^{p} \ar[d]^{q} & \prod_{x \in \cR} \cF l_{\sigma_0^x} \ar[d]^{q_0} \\
\cM \ar[r]^{\pi} & \cM_X(G)
}
\end{equation} 
Reasoning exactly as in the proof of Theorem \ref{mtstackalcove}, 
for $q$ and $q_0$ the sequences analogus to (\ref{Mercurevsimplified}) fit into exact sequences
\begin{equation*}
\xymatrix{
\Pic(\cM) \ar@{^{(}->}[r]^{q^*} & \Pic(\prod_{x \in \cR} \cF l_{\mathbf{a}^x}) \ar[r] & \Pic(L_{X^\circ} G) \ar[r] & \ker(H^2(q)) \ar[r] & 0 \\
\Pic(\cM_X(G)) \ar@{^{(}->}[r]^{q_0^*} \ar@{^{(}->}[u] & \Pic(\prod_{x \in \cR} \cF l_{\sigma_0^x}) \ar[r] \ar@{^{(}->}[u] & \Pic(L_{X^\circ} G) \ar[r] \ar[u]^{\Id} & \ker(H^2(q_0)) \ar[r] \ar[u] & 0 \ar[u]
}
\end{equation*}
The leftmost square is a pull-back square by exactly the same reasoning as (\ref{cartesiansquare}). So by definition of pull-back square we have $coker(q_0^*) \hra coker(q^*)$. By Theorem \ref{constantcy}, and the existence of line bundle of central charge one on $\cM_X(G)$, both these cokernels are equal to $coker(\ZZ \stackrel{diag}{\lra} \ZZ^{\oplus \cR})$. Therefore the two images in $\Pic(L_{X^\circ} G)$ are equal. Therefore $\ker(H^2(q_0)) \ra \ker(H^2(q))$ is an isomorphism. Since $H^2(\cM_X(G),\GG_m)$ is torsion-free by the one point case, so $\ker(H^2(q_0))$ is also torsion-free. Thus $\ker(H^2(q))$ is torsion-free. On the other hand, $ H^2(\cM,\GG_m)_{tor} \subset \ker(H^2(q))$ because $H^2(\prod_{x \in \cR} \cF l_{\mathbf{a}^x},\GG_m)_{tor}=0$ by Proposition \ref{cohvan}. Therefore $ H^2(\cM,\GG_m)_{tor}=0$. This proves the result.
\end{proof}
\subsection{General case}
For each $x \in \cR$, for a facet $\sigma_x$ let us choose an alcove $\mathbf{a}^x$ such that $\sigma_x$ lies in its closure. Let $Z^x \subset \mathbf{S}$ be the set of affine simple roots corresponding to the affine roots bordering $\mathbf{a}^x$ but not vanishing at $\sigma_x$. Let $\cG^{\mathbf{a}}$ be the group scheme obtained from $\cG$ as in \S \ref{constructiongpscheme} using the $\{\mathbf{a}^x\}_{x \in \cR}$. So we have a morphism $\cG^{\mathbf{a}} \ra \cG$. We shall abbreviate $\cM_X(\cG)$ as $\cM$ and $\cM_X(\cG^{\mathbf{a}})$ as $\cM^{\mathbf{a}}$.
\begin{thm} \label{genstacks} The cohomological Brauer group of $\cM_X(\cG)$ is $\ZZ^{\oplus \cR}$ modulo $(1,\cdots,1)$ and $\{(0,\cdots,a_{\alpha_x}^\vee,\cdots,0) | \alpha_x \in Z^x, x \in \cR \} $.
\end{thm}
\begin{proof}  We have the cartesian square
\begin{equation}
\xymatrix{
\prod_{x \in \cR} \cF l_{\mathbf{a}^x} \ar[r]^{p} \ar[d]^{q} & \prod_{x \in \cR} \cF l_{\sigma_x} \ar[d]^{q_0} \\
\cM^{\mathbf{a}} \ar[r]^{\pi} & \cM
}
\end{equation}
For $q$ and $q_0$ the sequences analogus to (\ref{Mercurevsimplified}) fit into exact sequences
\begin{equation*}
\xymatrix{
\Pic(\cM^{\mathbf{a}}) \ar@{^{(}->}[r]^{q^*} & \Pic(\prod_{x \in \cR} \cF l_{\mathbf{a}^x}) \ar[r] & \Pic(L_{X^\circ} G) \ar[r] & \ker(H^2(q)) \ar[r] & 0 \\
\Pic(\cM) \ar@{^{(}->}[r]^{q_0^*} \ar@{^{(}->}[u] & \Pic(\prod_{x \in \cR} \cF l_{\sigma_x}) \ar[r] \ar@{^{(}->}[u] & \Pic(L_{X^\circ} G) \ar[r] \ar[u]^{\Id} & \ker(H^2(q_0)) \ar[r] \ar[u] & 0 \ar[u]
}
\end{equation*}
The leftmost square is a pull-back square by exactly the same reasoning as (\ref{cartesiansquare}). So by definition of pull-back square we have $i: coker(q_0^*) \hra coker(q^*)$. They are subgroups of $\Pic(L_{X^\circ}G)$ which is torsion-free by Prop \ref{H2BLXG}. Further they have the same ranks namely $|\cR|-1$ over $\ZZ$ by (\ref{picseq}). So $coker(i)$ is torsion. Consider
\begin{equation}
\xymatrix{
0 \ar[r] & coker(q^*) \ar[r] & \Pic(L_{X^\circ} G) \ar[r] & \ker(H^2(q)) \ar[r] & 0 \\
0 \ar[r] & coker(q_0^*) \ar[r] \ar[u]^{i} & \Pic(L_{X^\circ} G) \ar[r] \ar[u]^{\Id} & \ker(H^2(q_0)) \ar[r] \ar[u]^k & 0 \\
}
\end{equation}
So by snake lemma, we have $coker(i)=ker(k)$. Thus $\ker(k)$ is torsion.  On the other hand,
 $\ker(H^2(q))$ is torsion-free because $Br(\cM^{\mathbf{a}})=0$ by Proposition \ref{sevptsalcove}. Considering the $\ZZ$-ranks above, we find that $$\ker(k)=\ker(H^2(q_0))_{tor}.$$

Now $H^2(\cM,\GG_m)_{tor} \subset \ker(H^2(q_0))$ because $H^2(\prod_{x \in \cR} \cF l_{\sigma_x},\GG_m)_{tor}=0$ by Prop \ref{cohvan}. Therefore $$Br(\cM)=H^2(\cM,\GG_m)_{tor}=\ker(H^2(q_0))_{tor}=\ker(k)=coker(i).$$ We have 
\begin{equation}
coker(i)=coker( \Pic(\prod_{x \in \cR} \cF l_{\sigma_x}) \oplus \Pic(\cM^{\mathbf{a}}) \ra \Pic(\prod_{x \in \cR} \cF l_{\mathbf{a}^x}))
\end{equation}
Consider the central charge morphism $\Pic(\prod_{x \in \cR} \cF l_{\mathbf{a}^x}) \ra \ZZ^{\oplus \cR}$. It is surjective and its kernel is contained in $\Pic(\cM^{\mathbf{a}})$.
So $coker(i)$ is $\ZZ^{\oplus \cR}$ modulo the image of $\Pic(\prod_{x \in \cR} \cF l_{\sigma_x}) \oplus \Pic(\cM^{\mathbf{a}})$. This image works out to $(1,\cdots,1)$ and $$\{(0,\cdots,a_{\alpha_x}^\vee,\cdots,0)| \alpha_x \in Z^x , x \in \cR \}.$$ 
\end{proof}
So the Brauer group of the moduli stack is trivial for groups of type $A$ and $C$, but it may not be trivial in general. For instance one can take the case of two points with facets vertices corresponding to affine roots whose highest co-root coefficient is two.

\section{Brauer group of moduli space $M_X^{rs}(\cG)$} \label{bms}

From \cite[Prop 6.1.1]{pp} and \cite[Prop 6.3.1]{pp} we quote
\begin{prop} \label{isagerbe}  Let $G$ be a semi-simple simply-connected group. We have
\begin{enumerate}
\item[a)]  The open substack $\cM_X^{rs}(\cG)$ of regularly stable torsors, in characteristic zero, and when $g_X \geq 3$, has complement of co-dimension at least two.
\item[b)] The morphism $\cM_X^{rs}(\cG) \ra  M_X^{rs}(\cG)$ is a gerbe banded by $Z_G$.

\end{enumerate}
\end{prop}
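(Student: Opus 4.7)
The plan is to reduce both parts via the Balaji--Seshadri isomorphism (\ref{sbs}) $\cM_X(\cG)\simeq\cM_Y^{\tau}(\Gamma,G)$ to statements on $Y$, and then to combine the smooth forgetful morphism $\pi\colon\cM_Y^{\tau}(\Gamma,G)\to\cM_Y(G)$ (introduced in the proof of Proposition \ref{smoothlocus}) with the codimension bound for ordinary $G$-bundles \cite[Thm 2.5]{BHf1}.

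For part (1), openness of $\cM_X^{rs}(\cG)$ follows from upper semi-continuity of the dimension of the automorphism group scheme: by Proposition \ref{Z_G} the constant group scheme $Z_G$ always embeds into $\Aut_{\Gamma\text{-}G}(E)$, so regular stability is equivalent to the open condition $\dim\Aut_{\Gamma\text{-}G}(E)=0$. For the codimension estimate, I would first verify that for semisimple $G$ a $\Gamma$-$G$ bundle $E$ is regularly stable if and only if the underlying $G$-bundle is regularly stable: one direction is the lemma in the proof of Proposition \ref{smoothlocus}, and conversely, if $\Aut_G(E)=Z_G$ then $\Aut_{\Gamma\text{-}G}(E)=Z_G^{\Gamma}=Z_G$ because, by the very proof of Proposition \ref{Z_G}, $\Gamma$ acts trivially on the central subgroup $Z_G$. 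Hence $\pi^{-1}\bigl(\cM_Y^{rs}(G)\bigr)=\cM_Y^{\tau,rs}(\Gamma,G)$, and smoothness of $\pi$ transfers the codimension bound from $\cM_Y(G)$ to $\cM_Y^{\tau}(\Gamma,G)$. Riemann--Hurwitz gives $g_Y\geq g_X\geq 3$, which combined with (the characteristic-zero refinement of) \cite[Thm 2.5]{BHf1} yields codimension at least two.

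For part (2), fiberwise $p^{-1}([\cE])=B\Aut(\cE)=BZ_G$ by the definition of regular stability together with the canonical identification $\Aut(\cE)=Z_G$ supplied by Proposition \ref{Z_G}. To upgrade this pointwise description to an actual gerbe I would invoke Luna's \'etale slice theorem: by the smoothness of $\cM_X(\cG)$ from \cite[Prop 1]{heinloth} and the smoothness of $M_X^{rs}(\cG)$ (Proposition \ref{smoothlocus}), the map $\cM_X^{rs}(\cG)\to M_X^{rs}(\cG)$ admits \'etale-local sections given by universal families over slices. Any two such local sections differ by a cocycle valued in the constant sheaf $Z_G$, so $p$ is a gerbe banded by $Z_G$.

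The main obstacle is the borderline case $g_X=3$: since \cite[Thm 2.5]{BHf1} is stated for $g_X\geq 4$ in arbitrary characteristic, in characteristic zero at $g_X=3$ I would need a direct dimension count along the Harder--Narasimhan strata and along the stable-but-not-regularly-stable locus, in the spirit of Proposition \ref{dimest}, to confirm the codimension-two bound in the unequivariant case.
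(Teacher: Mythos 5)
Your route is genuinely different from the paper's, and it contains a gap at its central step. The paper does \emph{not} transfer the codimension bound from $\cM_Y(G)$ through the forgetful map; it reproves the Brill--Noether estimate intrinsically in the parahoric setting, following \cite{BHf1} closely: first a reduction to $G$ of adjoint type via flatness of $\cM_X(\cG)\ra\cM_X(\cG/Z)$, then the observation that the non-regularly-stable locus lies in $\cW_{\End(\gfr)/k}$, then a reduction to a Borel group scheme $\cM_X(\cB)$ and codimension estimates for $\cW_{l_\chi}$, $\cW_{l_{\alpha-\beta}}$, $\cW_{\pfr}$ in terms of parabolic degrees (Lemmas \ref{3.1}--\ref{3.3}), and finally a contradiction with the non-triviality of the pullback of $\cO(-D)$ to $\cM_X(\cB)$ (Lemma \ref{pulltoB}, which rests on Heinloth's description of $\Pic(\cM_X(\cG))$). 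Part (2) is, as in your proposal, an immediate consequence of Proposition \ref{Z_G}.

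The gap in your argument is the assertion that $\pi\colon\cM_Y^{\tau}(\Gamma,G)\ra\cM_Y(G)$ is smooth and that smoothness ``transfers the codimension bound.'' This morphism is not smooth, not flat, and not dominant. At a point $[E]$ with $E$ regularly stable as a $G$-bundle, the tangent map is the inclusion $H^1(Y,\ad E)^{\Gamma}\hookrightarrow H^1(Y,\ad E)$, which is injective but far from surjective; equivalently, the image of $\pi$ is contained in the locus of $G$-bundles on $Y$ admitting a $\Gamma$-linearization of type $\tau$, which is a substack of large codimension (on the order of $\dim G\cdot(g_Y-g_X)$), not an open substack. Consequently, knowing that the non-regularly-stable locus has codimension two in $\cM_Y(G)$ gives no control whatsoever on its codimension inside $\cM_Y^{\tau}(\Gamma,G)$: the bad locus downstairs could a priori contain the entire image of $\pi$. (The paper does assert smoothness of this forgetful map in passing in the proof of Proposition \ref{smoothlocus}, but that assertion cannot be relied upon, and tellingly the paper does not use it for the codimension estimate.) Your worry about the borderline case $g_X=3$ is, by contrast, a non-issue: Riemann--Hurwitz gives $g_Y\geq 2g_X-1\geq 5$ whenever $\Gamma$ is non-trivial. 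To repair the argument you would have to produce a dimension count that sees only $\Gamma$-invariant cohomology, i.e.\ parabolic degrees and invariant direct images --- which is exactly what the paper's Lemmas \ref{3.1}--\ref{3.3} do.
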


As a corollary we get
\begin{Cor} \label{resiso} The restriction map $res: \Pic(\cM_X(\cG)) \ra \Pic(\cM_X^{rs}(\cG))$ induces an isomorphism. 
\end{Cor}
\begin{proof} By \cite[Prop 1]{heinloth}, $\cM_X(\cG)$ is a smooth algebraic stack which is locally of finite type. Now $\cM^{rs}_X(\cG)$ is a open substack whose complement has codimension at least two. So by \cite[Lemma 7.3 ii)]{BHf1}, the restriction map is an isomorphism.
\end{proof}

\begin{prop} \label{codim=2} The restriction map of cohomological Brauer group  $Br(\cM) \ra Br(\cM^{rs}_X(\cG))$ to the regularly stable locus is an isomorphism.
\end{prop}
\begin{proof} Let us abbreviate $\cM_X(\cG)$ as $\cM$ and $\cM_X^{rs}(\cG)$ as $\cM^\circ$. Recall that these stacks are smooth noetherian stacks over $\CC$. Since $\cM$ is smooth and noetherian, so $H^2_{\acute{e}t}(\cM,\GG_m)$ and $H^2_{\acute{e}t}(\cM^\circ,\GG_m)$ are torsion by a direct generalization of the standard proof for schemes (cf \cite[Theorem 9.1.5]{tamme}).

% So  $D_{\cM}$ (cf \ref{Cartier=Weilstacks}) in $\cM_{et}$ restricts to $D_{\cM^{\circ}}$ in $\cM^{\circ}_{et}$ since $\cM_1=\cM^\circ_1$. Indeed, this can be checked etale locally as follows: for $U \in et(\cM)$, we have $$U_1= (U \times_{\cM} \cM^{\circ})_1= (U|_{\cM^\circ})_1,$$ further for $V \in et(\cM^{\circ})$, if we view $V \in et(\cM)$, then it restricts to $V$ itself in $et(\cM^{\circ})$. Similarly the etale sheaves $\GG_{m,K}$ and $\GG_m$ in $\cM_{et}$ restricts to $\GG_{m,K}$ and $\GG_m$ in $\cM^{\circ}_{et}$.

Let $A \rightarrow \cM$ be a smooth atlas of $\cM$ and let $A'$ denote the inverse image of $\cM^{\circ}$ in $A$. Let $A_p$ (resp. $A'_p$) denote the fiber product of $A$ (resp. $A'$) with itself $(p+1)$-times over $\cM$ (resp. $\cM^{rs}$). 
\begin{lem} We have spectral sequences \begin{equation} \label{twospec1} \xymatrix{ E^{p,q}_1=H^q_{\acute{e}t}(A_p,\GG_m ) \ar[d] & \implies & H^n_{\acute{e}t}(\cM,\GG_m) \ar[d] \\ E^{p,q}_1=H^q_{\acute{e}t}(A'_p,\GG_m) & \implies & H^n_{\acute{e}t}(\cM^\circ,\GG_m) }
\end{equation} whose differentials are compatible with restriction morphisms for every $E^{p,q}_r$ including $r=\infty$.
\end{lem} Here below we will not need the compatibility of the restriction of $E^{p,q}_\infty$ and $E^n$.
\begin{proof} Let  $K \ra I^\bullet$ be an injective resolution in the derived category of bounded below complexes of abelian sheaves on $A_\bullet$. Consider the  double complex
\begin{equation} 
\Gamma(A_p,I^q|_{A_p}).
\end{equation}
Then  $E^{p,q}_1=H^q_{\acute{e}t}(A_p,I^\bullet|_{A_p} ) \implies  H^n_{\acute{e}t}(\cM,K)$ is the spectral sequence associated to this double complex (cf the proof of Theorem \ref{ss} or \cite[Thm 6.11]{conrad}). The restriction morphism $res: Ab(\acute{e}t (A_\bullet)) \ra Ab(\acute{e}t(A'_\bullet))$ admits a left adjoint $L$ defined as follows: given $G \in Ab(A'_\bullet)$ and $u: U \ra A_p \in \acute{e}t(A_\bullet)$, define $L(G)_u$ by extending $G$ restricted to $U \times_{A_p} A'_p$ by zero  outside of $U'$. This is exact. This shows that $res$ sends injectives to injectives. Thus $K_{A'_\bullet} \ra I^\bullet |_{A'_\bullet}$ is an injective resolution. Since we have a morphism of double complexes compatible with the differentials
\begin{equation} 
\Gamma(A_p,I^q|_{A_p}) \ra \Gamma(A'_p, I^q|_{A_p})
\end{equation}
so for every $E^{p,q}_r$, including $r = \infty$ we get morphisms compatible with differentials.
\end{proof}

\begin{lem} The natural restriction map $H^q_{et}(A_p,\GG_m) \rightarrow H^q_{et}(A'_p,\GG_m)$ is an isomorphism for any $p$ and any $q \in \{0,1,2\}$.\end{lem}\begin{proof}For the case $q=0$, this follows by observing that $\cO_{A_p}^\times(A_p) \rightarrow \cO_{A'_p}^\times(A'_p)$ is an isomorphism since $A'_p \subset A_p$ is an open subset of codimension at least two. For $q=1$, this follows by identifying $H^1_{et}(A_p,\GG_m)$  with the group of line bundles on $A_p$  and since $A_p$ is smooth.

For $q=2$, notice that $A_p$ and $A'_p$ are both smooth and quasi-projective. For a smooth variety $Y$, recall that by a theorem of Grothendieck  $H^2_{\acute{e}t}(Y,\GG_m)$ is always torsion (cf \cite{milne}). On the other hand, by a theorem due to Gabber (cf \cite{dj}), for a quasi-projective variety, $H^2_{\acute{e}t}(Y,\GG_m)_{tor}$ coincides with the Brauer group of morita equivalence classes of Azumaya algebras. So $H^2_{\acute{e}t}(A_p,\GG_m)$ identifies with the Brauer group $Br(A_p)$. Thus the restriction map for $q=2$, identifies with the restriction map $Br(A_p) \ra Br(A'_p)$.  Let $Z_0 \hra A_p$ be the closed subscheme whose complement is $A'_p$. Since we are over $\CC$, so if $Z_0$ is not itself smooth then its singular locus $Z_1$ is a closed subvariety of strictly smaller dimension. Inductively define $Z_{n+1}$ as the singular subvariety of $Z_n$ till we reach a $k$ such that $Z_k$ is smooth. Such a $k$ exists, for instance when $dim(Z_k)$ becomes zero. From $A_p$, we remove $Z_k$ and then successively $Z_{n-1} \setminus Z_{n}$ for $n$ from $k$ to $1$. The restriction map $Br(X) \ra Br(X \setminus Z)$ is an isomorphism by \cite[Corollaire 6.2]{grot-brau} (cf \cite[Theorem VI.5.1]{milne}) if $X$ and $Z$ are regular and the codimension of $Z$ is at least two. Applying this result repeatedly, it follows that  $Br(A_p) \ra Br(A'_p)$ is an isomorphism. 
\end{proof}

\begin{lem} \label{twostep} The groups of $H^2_{\acute{e}t}(\cM(\cG),\GG_m)$ and $H^2_{\acute{e}t}(\cM^{rs}(\cG),\GG_m)$ have two-step filtrations whose associated graded are isomorphic.\end{lem} \begin{proof} The differentials $d^{p,q}_1: E^{p,q}_1 \rightarrow E^{p+1,q}_1$ in (\ref{twospec1})  are 
compatible with restriction morphisms which are isomorphisms.
So we have an isomorphism of groups $E^{p,q}_2$  of both spectral sequences for all $(p,q)$ where $q \leq 2$. Again the differentials $d^{p,q}_2: E^{p,q}_2 \rightarrow E^{p+2,q-1}_2$ are compatible with restriction morphisms. By analysing case by case, we shall show that the induced morphism between the $E^{p,q}_\infty$ terms, where $p+q=2$, is an isomorphism. To this end, we first consider the $E^{0,2}_\infty$ term. We have the sequences $E^{-2,3}_2 \rightarrow E^{0,2}_2 \rightarrow E^{2,1}_2 \rightarrow E^{4,0}_2$ and $E^{1,1}_2 \rightarrow E^{3,0}_2 \rightarrow E^{5,-1}_2$. Since $E^{-2,3}_2$ and $E^{5,-1}_2$ is zero, so we conclude that the restriction morphism is again an isomorphism for the sequence $ E^{-3,4}_3 \rightarrow E^{0,2}_3 \rightarrow E^{3,0}_3$. So $E^{0,2}_4$ are isomorphic for both spectral sequences. But now $E^{0,2}_4=E^{0,2}_\infty$. Now we consider $E^{1,1}_\infty$. We have the sequence $E^{-1,2}_2 \rightarrow E^{1,1}_2 \rightarrow E^{3,0}_2$. Reasoning as before, we conclude that   $E^{-2,3}_3 \rightarrow E^{1,1}_3 \rightarrow E^{4,-1}_3$ are isomorphic for both spectral sequences. But now $E^{1,1}_3 =E^{1,1}_\infty$. Now we consider $E^{2,0}_\infty$. We have the sequence $E^{0,1}_2 \rightarrow E^{2,0}_2 \rightarrow E^{4,-1}_2$ with isomorphic terms for both spectral sequences. Thus $E^{-1,2}_3 \rightarrow E^{2,0}_3 \rightarrow E^{5,-2}_3$ are isomorphic for both spectral sequences. Thus $E^{2,0}_3 =E^{2,0}_\infty$. \end{proof} 

Consider the restriction map $H^2(\cM,\GG_m)_{tor} \ra H^2(\cM^{rs},\GG_m)_{tor}$. 
By the so-called $5$-lemma it follows that it is an isomorphism.

\end{proof}

\subsection{Brauer group of $M^{rs}_X(\cG)$}

Recall that an Azumaya algebra of degree $n$ on a scheme $Y$ is a sheaf of $\cO_Y$-algebras that is \'etale locally isomorphic to $M_n(\cO_X)$. We say that two Azumaya algebras are Brauer-equivalent if there are locally free sheaves $V$ and $W$ on $Y$ of strictly positive rank at every point of $Y$ and an isomorphism $A \otimes End(V) \simeq B \otimes End(W)$ of $\cO_Y$-algebras. The Brauer group of $Y$ is the group of equivalence classes of Azumaya algebras on $Y$ with group operation induced by tensor product.

\subsection{Reduction to cokernel of weight homomorphism} \label{redcokerwt}
Let $M_X^{rs}(\cG)$ denote the moduli space of regularly stable $\cG_X$-torsors on $X$. 

Let us denote  the class of the gerbe $ \cM_X^{rs}(\cG) \ra  M_X^{rs}(\cG)$ by $\psi \in H^2_{\acute{e}t}(M_X^{rs}(\cG),\GG_m)$. {\it For a stack $\cX$, following \cite[BHf1]{BHf1} and \cite[Giraud]{giraudbook} by $\Pic(\cX)$ we shall denote the abelian group of isomorphism classes of line bundles on $\cX$}. 
 
Given any line bundle $L$ on $ \cM_X^{rs}(\cG)$, the {\it weight} of $L$ is a character $\chi: Z_G  \ra \GG_m$ defined as follows (cf \cite[BHf1]{BHf1}).  An object $\cE \in \cM(S)$, may be viewed as a $1$-morphism $S \ra \cM$. Let $L_{\cE} \ra S$ denote the pull-back line bundle. A line bundle $L$ on $\cM$ defines, for any $k$-scheme $S$, a functor from $\cM(S)$ to the groupoid of line bundles on $S$. So we have a group homomorphism 
$ \Aut (\cE) \ra \Aut_{S}(L_\cE)$.
Since $Z_G(S) \hra \Aut (\cE)$, we get $ Z_G(S) \ra \Aut_{S}(L_\cE)$ for every $\cE$. When $\cE$ is a closed point, then $L_\cE$ is a one-dimensional vector space. Thus we get $wt(L_{\cE}):Z_G \ra \Aut_S(L_\cE)=\GG_m$. We now suppose that $\cM$ is connected. Then  $wt(L_{\cE})$ is independent of $\cE$. So we get the weight homomorphism 
\begin{equation} \label{wt} wt(L): Z_G \ra \GG_m
\end{equation}

\begin{prop} \label{bgp} We have the exact sequence
\begin{equation} \label{mainseq}
1 \ra \Pic(M_X^{rs}(\cG)) \ra \Pic(\cM_X^{rs}(\cG)) \stackrel{wt}{\ra} \Hom(Z_G, \GG_m) \stackrel{\psi_*}{\ra} Br(M_X^{rs}(\cG)) \ra Br(\cM^{rs}_X(\cG))
\end{equation}

\end{prop}
\begin{proof} We start by recalling a general result for any gerbe $p:\cM \ra M$. For a sheaf $A \in \cM_{\acute{e}t}$, 
we have the Leray spectral sequence $
E^{p,q}_2=H^p_{et}(M,R^qp_*(A)) \implies H^{*}_{\acute{e}t}(\cM,A)$.
 This  gives the so called {\it Brauer sequence} (cf \cite[V.3.1.4.1]{giraudbook}) 
 \begin{eqnarray*}
0 \ra H^1(M,p_* A) \stackrel{\phi}{\ra} H^1(\cM,A) \stackrel{\gamma}{\ra}
H^0(M,R^1p_*(A)) 
\stackrel{d}{\ra} H^2(M,p_*(A)) \stackrel{\psi}{\ra} H^2(\cM,A) 
\end{eqnarray*}

Let us recall the explicit descriptions of the above morphisms. By \cite[Prop III.3.1.3]{giraudbook}, $\phi$ pulls back a $p_*(A)$ torsor and then extends structure group by $p^*p_* A \ra A$. For $\phi$, we temporarily specialize to the following case: $A$ is the sheaf defined by $\GG_m$ on $\acute{e}t(\cM)$. Evaluating over  $U$ we have $p_*(\GG_m)(U)=\GG_m(p^{-1}(U))$ which equals $\GG_m(U)$ since $p:\cM \ra M$ is a gerbe by \cite[Lemme 3.18]{lmb}. So $p_*\GG_m=\GG_m$. Thus in our case, $\phi$ is just the pull-back of line bundles from $M$ to $\cM$. By \cite[V.2.1.1]{giraudbook}, $\gamma$ may be described as follows: consider the presheaf $\cP \in M_{\acute{e}t}$ which on any object $Y \in \acute{e}t(M)$ takes value $H^1(p^{-1}(Y),A)$. Consider the sheafification functor which to a presheaf associates its sheaf. Applying this functor to $\cP$ we have a morphism $\cP \ra R^1p_*(A)$ . Evaluating this map at the open $M$ of the site $\acute{e}t(M)$ gives $\gamma$.
 
We specialize the above situation to $\cM_X^{rs}(\cG) \ra M_X^{rs}(\cG)$ which is a gerbe banded by $Z_G$. Setting $A=\GG_m$, we now want to analyse $\gamma$ and $H^0(M,R^1p_*(\GG_m))$. Let $\mathcal{H}om(Z_G,\GG_m)$ denote the constant sheaf in $M_{\acute{e}t}$. Consider the map $wt: \cP \ra \mathcal{H}om(Z_G,\GG_m)$ given by the weight homomorphism. So the weight homomorphism factors naturally as $\cP \ra R^1p_*(\GG_m) \stackrel{\theta}{\ra} \mathcal{H}om(Z_G,\GG_m)$. Let $m: \Spec(\CC)  \ra M$ be an arbitrary closed point. We have $R^1p_*(\GG_m)_m= \varinjlim_{Y'} H^1(\cM \times_M Y', \GG_m)$ where the colimit is taken over $Y'$ in the dual category of \'etale neighbourhoods of $m \in M$ (\cite[II.6.4]{tamme}). Now since $m$ is separably closed, so it is a final object of this category. Thus the stalk $R^1p_*(\GG_m)_m$ is simply $H^1_{\acute{e}t}(\cM \times_M m,\GG_m)$. By a theorem of Giraud the set of $Z_G$-gerbes on $m$ are classified by $H^2(m,Z_G)$ which is trivial. Thus the gerbe $\cM \times_M m \ra m$ must be $BZ_G$. Thus $H^1_{\acute{e}t}(\cM \times_M m,\GG_m)=H^1_{\acute{e}t}(BZ_G,\GG_m)=H^1(Z_G,\GG_m)=Hom(Z_G,\GG_m)$ which is also the stalk of $\mathcal{H}om(Z_G,\GG_m)$ at $m$. Since $M$ is a scheme of finite type over $\CC$, so by \cite[II.5.6(i)]{tamme}, it follows that $\theta$ is an isomorphism of sheaves. Thus after identifying $H^0(M,R^1p_*(\GG_m))$ with $Hom(Z_G,\GG_m)$, the morphism $\gamma$ is simply $wt$.
 
Summarizing the above, we get the following $5$-term short sequence
\begin{equation}
1 \ra \Pic(M) \ra \Pic(\cM) \stackrel{wt}{\ra} \Hom(Z, \GG_m) \stackrel{\psi_*}{\ra} H^2_{\acute{e}t}(M,\GG_m) \ra H^2_{\acute{e}t}(\cM,\GG_m).
\end{equation} 
 
For a smooth variety $Y$, recall that by a theorem of Grothendieck  $H^2_{\acute{e}t}(Y,\GG_m)$ is always torsion (cf \cite{milne}). On the other hand, by a theorem due to Gabber (cf \cite[Thm 1.1 de Jong]{dj}), for a quasi-compact separated scheme $Y$ endowed with an ample invertible sheaf, $H^2_{\acute{e}t}(Y,\GG_m)_{tor}$ coincides with the Brauer group of morita equivalence classes of Azumaya algebras.
 Taking $Y=M_X^{rs}(\cG)$, since it is contained in the smooth locus of $M_X^{ss}(\cG)$ by Proposition \ref{isagerbe}, and since by \cite[Balaji-Seshadri]{bs} $M_X^{ss}(\cG)$ exists as a quasi-projective variety, so by combining the theorems of Grothendieck and Gabber it follows that the Brauer group $Br(Y)$,  $H^2_{\acute{e}t}(Y,\GG_m)$ and $H^2_{\acute{e}t}(Y,\GG_m)_{tor}$ all coincide.

For any smooth scheme or stack $Y$, the inclusion of sites $i:\acute{e}t(Y) \ra \acute{E}t(Y)$ is a morphism of topologies (cf \cite[Definition I.1.2.2]{tamme}). Further $i$ is full-faithful and we observe that any covering in $\acute{E}t(Y)$ of $u \in \acute{e}t(Y)$ is also a covering in $\acute{e}t(Y)$. Thus by \cite[Theorem I.3.9.2]{tamme}, the functor $i^s:Y_{\acute{E}t} \ra Y_{\acute{e}t}$ defined for $F \in Y_{\acute{E}t}$ by $i^s(F)(u)=F(i(u))=F(u)$
for $u: U \ra \cX \in \acute{e}t(Y)$ is exact. Thus by the Leray spectral sequence \cite[I.3.7.6]{tamme},
$E^{pq}_2=H^p_{\acute{e}t}(Y,R^q i^s(F)) \implies H^{p+q}_{\acute{E}t}(Y,F)$
it follows that cohomology over the big and small \'etale sites agree.  So we may switch to the big-\'etale site in the above cohomology groups. 
 Further the image of $H^2_{\acute{E}t}(M^{rs},\GG_m)$ lands in $H^2_{\acute{E}t}(\cM^{rs},\GG_m)_{tor}$.  Now by  Proposition \ref{codim=2}, we get the desired result.
 
\end{proof}

{\it Thus to compute the Brauer group of $M$, we need to compute the image of $wt:\Pic(\cM) \ra Hom(Z_G, \GG_m) $ homomorphism}. This is carried out in the subsections below by relating it to the case where at the parabolic point $x$ in $\cR$ the parahoric group scheme corresponds to the Iwahori subgroup.

\subsection{The case of one parabolic point $x$ with facet a vertex $\sigma$ of the alcove} \label{onept}

\subsubsection{cokernel of weight in terms of cokernel of evaluation map}
For the convenience of the reader, we give a summary of \cite[Biswas-Hoffmann]{BHf1}. Let us place ourselves in the context $G/\CC$ is any reductive group. Recall that the connected components of the stack $\cM_X(G)$ are parametrized by $\pi_1(G)$. Let $d \in \pi_1(G)$ and let $\cM^{rs,d}_X(G)$ denote the corresponding component of the regularly stable locus. Now \cite[Prop 7.2]{BHf1} gives an alternate description of the cokernel of 
\begin{equation} wt^d:\Pic(\cM^{rs,d}_X(G)) \ra \Hom(Z_G,\GG_m).
\end{equation}  
To explain the alternate description, we need to first introduce some more notation. Let $\Lambda_{T}:= \Hom(\GG_m,T)$ denote the cocharacter group of $T$. Let $\Lambda_{coroots} \subset \Lambda_T$ denote the subgroup generated by the coroots of $G$. The Weyl group $W$ acts trivially on $\Lambda_T/\Lambda_{coroots}$. This quotient upto canonical isomorphism is $\pi_1(G)$. A bilinear form $b$ on $\Lambda_{coroots}$ is said to be even if $b(\lambda \otimes \lambda) \in 2 \ZZ$ for all $\lambda \in \Lambda_{coroots}$. Let $G^{ad}$ be the adjoint group of $G$.  Following \cite[Definition 6.3]{BHf1} let 
\begin{equation} \Psi(G) \subset \Hom(\pi_1(G^{ad}) \otimes \pi_1(G^{ad}), \mathbb{Q}/\ZZ),
\end{equation}
denote the abelian group of all bilinear maps that come from even $W$-invariant symmetric bilinear forms on $\Lambda_{coroots} \times \Lambda_{coroots} \ra \ZZ$. It is determined by the root system of $G$. If $G=G_1 \times G_2$, then $\Psi(G)=\Psi(G_1) \oplus \Psi(G_2)$. We refer the reader to \cite[Table 1]{BHf1} for an explicit description of $\Psi(G)$ and its generator for all simple groups. Let $G'$ denote the derived subgroup of $G$ and $Z^\circ_G$ the connected component of identity in $Z_G$. Following \cite[Definition 6.4]{BHf1}, let $\Psi'(G) \subset \Psi(G)$ be the subgroup of all elements such that 
\begin{equation} b(\pi_1(G/Z_G^\circ) \times \pi_1(G'))=0.
\end{equation}
 Given an element $d \in \pi_1(G)$, with image $\overline{d} \in \pi_1(G/Z_G^\circ)$, the evaluation map
\begin{equation} ev_G^d: \Psi'(G) \rightarrow \Hom(\frac{\pi_1(G^{ad})}{\pi_1(G')}, \mathbb{Q}/\ZZ),
\end{equation}
sends a bilinear form $b$ to $b(\overline{d},\underline{?})$. One can check directly that $\Hom(\frac{\pi_1(G^{ad})}{\pi_1(G')}, \mathbb{Q}/\ZZ)$ equals just $\Hom(Z_G,\GG_m)$ (for instance cf \cite[Remark 6.5]{BHf1}).
We recall 
\begin{prop} \label{coker=} \cite[Prop 7.2]{BHf1} $coker(wt^d) \simeq coker(ev^d_G) $.
\end{prop}

With this preparation, let us prove our main result for the case of one point. Recall that the diagram (\ref{cartesiansquare}) is cartesian. Let $L_\sigma \in \Pic(\cM_X(\cG^{\sigma}))$ correspond to the positive generator of $\Pic(\cF l_{\sigma})$.
\begin{thm} \label{mtonept} 
Let $\sigma=\sigma^\alpha$ for $\alpha \in \mathbf{S}$. Let $\omega_\alpha$ be the fundamental weight of $G/\CC$ for $\alpha \neq \alpha_0$ and set $\omega_{\alpha_0}$ as the trivial weight. Then $L_\sigma$ under $wt: \Pic(\cM_X(\cG^{\sigma})) \ra \Hom(Z_G,\GG_m)$ goes to  $Z_G \hra T \stackrel{\omega_\alpha}{\ra} \GG_m$. 
\end{thm}

\begin{proof} We first consider the case $\alpha=\alpha_0$.  In the following lines we show how to deduce this case from \cite{BHf1}.  Thus $\cG^\sigma$ is the constant group scheme $X \times G$. Now since our group is semi-simple and simply-connected so we have $\Psi'(G)=\Psi(G)$. Since $\pi_1(G)$ is trivial in our case, so we just have to consider $ev^{e}_G$ where $e \in G$ is the identity element. By the explicit description of $\Psi(G)$ in \cite[Table 1]{BHf1}, we see that $b(\overline{e}, \underline{?})$ is the trivial homomorphism in $\Hom(Z_G,\GG_m)$. So $coker(ev^e_G)=\Hom(Z_G,\GG_m)$.  By Proposition \ref{coker=}, $wt$ is the trivial morphism. 

We now consider the case $\alpha \in \mathbf{S} \setminus \alpha_0$. 
Consider the morphism of stacks $\pi: \cM_X(\cG^{\mathbf{a}}) \ra \cM_X(\cG^{\sigma})$ defined  by extension of structure group $\cG^{\mathbf{a}} \ra \cG^{\sigma}$.

\begin{lem} The weight map $wt$ as defined in \S \ref{redcokerwt} factors as follows:
\begin{equation} \label{wtfact}
\xymatrix{
\Pic(\cM_X(\cG^\sigma)) \ar@{^{(}.>}[r] \ar@/^1pc/[rr] & \Pic(\cM_X(\cG^\mathbf{a})) \ar[r]^{wt} & \Hom(Z_G,\GG_m).
}
\end{equation}
\end{lem}
\begin{proof}
To check the claim, setting  $S=\Spec(\CC)$, we take
\begin{enumerate}
\item an arbitrary closed point $\cE: S \ra \cM_X(\cG^{\mathbf{a}})$,
\item a line bundle  $L$ on $\cM_X(\cG^{\sigma})$. 
\end{enumerate} 
Let $\cE \ms \cE^\sigma$ under $\pi: \cM_X(\cG^{\mathbf{a}}) \ra \cM_X(\cG^\sigma)$.
 So we have a group homomorphism $\Aut(\cE) \ra \Aut(\cE^{\sigma})$. Let $L_S \in \Pic(S)$ and $L' \ra \cM_X(\cG^{\mathbf{a}})$ denote the pull-back line bundles $(\cE^\sigma)^*L$ and $\pi^*L$ respectively. Viewing a line bundle on a stack as a sheaf, we have a natural morphism $\alpha: \cE^*L' \ra (\cE^\sigma)^*L = L_S$. It is an isomorphism of line bundles. Let us check that we have a factorization which makes  the diagrams 
\begin{equation}
\xymatrix{
                   & \Aut (\cE^\sigma) \ar[rr]_{wt(L)}    && \Aut_S((\cE^\sigma)^*L) \\
Z_G \ar[r] \ar[ru] &  \Aut (\cE)       \ar[rr]^{wt(L')} \ar@{.>}[u] && \Aut_S(\cE^* L') \ar[u]_{\Aut(\alpha)}   
}
\end{equation}
commute. The triangle commutes because $\pi$ is a morphism of stacks. Considering  $S \stackrel{\cE}{\ra} \cM_X(\cG^{\mathbf{a}}) \stackrel{\pi}{\ra} \cM_X(\cG^{\sigma})$, by functoriality properties of sheaves on a stack applied to $L \ra \cM_X(\cG^\sigma)$ the square also commutes. We have $\Aut_S((\cE^\sigma)^*L)=\Aut_{S}(L_S) =\GG_m(S) $.
Since $\cM_X(\cG^{\mathbf{a}})$ is connected, so this show that $wt$ factors in (\ref{wtfact}).
\end{proof}
So the pull-back $L_{\sigma^{\alpha_0}}(\mathbf{a}) \ra \cM_X(\cG^\mathbf{a})$ of $L_{\sigma^{\alpha_0}} \ra \cM_X(G)$ has trivial weight. From (\ref{decomplineb}) we observe that under $\pi: \cM_X(\cG^{\mathbf{a}}) \ra \cM_X(\cG^{\sigma})$ we have  
\begin{equation} \pi^*L_\sigma \simeq L_{\omega_\alpha} \otimes L^{a_\alpha^\vee}_{\sigma^{\alpha_0}}(\mathbf{a}).
\end{equation} We want to compute the weight of $L_\sigma$. So we are reduced to computing the weight of $L_{\omega_\alpha} \ra \cM_X(\cG^{\mathbf{a}})$. To carry out the computation, we introduce some notation.
 
 Let $\omega=\omega_\alpha$. Let $S=\Spec(\CC)$. For any $\cE \in \cM_X(\cG^{\mathbf{a}})(S)$ viewed as a $1$-morphism $S \ra \cM_X(\cG^{\mathbf{a}})$, let $L_{\omega,S}$ denote the pull-back of $L_\omega$, let $\cE_x$ denote the restriction of $\cE \ra X \times S$ to the closed subscheme $x \times S$ and let $\cE_x(T)$ denote the $T$-bundle obtained by extension of structure group $\cG^{\mathbf{a}}_x \simeq \cG_{\mathbf{a}} \otimes k \ra B \ra T$. We have morphisms of stacks $\cM_X(\cG^{\mathbf{a}}) \ra \mathbb{B}(\cG^{\mathbf{a}}_x) \ra \mathbb{B}(T)$ where $\mathbb{B}(?)$ denotes the classifying stack of a group scheme. By construction $L_\omega$ comes from a line bundle in $\Pic(\mathbb{B}(T))=\XX^*(T)$. Since $\cM_X(\cG^{\mathbf{a}})$ is connected, so to compute $wt(L_\omega)$ by definition (cf (\ref{wt})) we want to compute the composite $$Z_G  \ra \Aut(\cE) \ra \Aut(\cE_x) \ra \Aut(\cE_x(T)) \ra \Aut(L_{\omega,S})=\GG_m(S).$$ To this end, we book-keep some morphisms to show that they are canonical.

\begin{lem} \label{redq=T} We have a natural isomorphism $\cG_{\mathbf{a}}/\cG^u_{\mathbf{a}} \ra T$.
\end{lem}
\begin{proof}  By Proposition \ref{raghu} it follows that $\cG_{\mathbf{a}}/\cG^u_{\mathbf{a}}$ has empty  root-system since $Y_\mathbf{a}$ is empty. For any $\theta \in \mathbf{a}^\circ$, we have $-\lfloor (\theta,r) \rfloor = 1 - \lceil (\theta,r) \rceil$ where $r$ is a root of $G$. By (\ref{defnpara}) and (\ref{defnparau}) it follows that $\cG_{\mathbf{a}}/\cG^u_{\mathbf{a}}$ is isomorphic to $T$. Further, this isomorphism can be made canonical since $\cG_{\mathbf{a}} \otimes k$ surjects {\it naturally} onto $B$.  
\end{proof}

\begin{lem} Let $\phi: Z_G  \ra \Aut(\cE)$ be given by the natural inclusion $Z_G \times X \hra \cG^{\mathbf{a}}$ followed by the right $\cG^{\mathbf{a}}$-action on $\cE$. Then $Z_G \ra \Aut(\cE)$ is given by $\phi$.
 \end{lem}
 \begin{proof} It is well known that for any principal $G$-bundle $F \ra M$, the automorphism group scheme $Aut_M(F)$ is represented by $F \times_{R-act,M,conj} G$, which is the quotient of $F \times G$ by the right action of $G$ on $F$ and action through conjugation on itself. If $f$ denotes a local section of $F$ and $g \in G$, then the class of $(f,g)$ in $Aut_M(F)$ represents the unique (local) $G$-bundle automorphism that sends $f \ms fg$ and therefore $fh \ms fgh= (fh) (h^{-1}gh)$ for $h \in G$. Considering $F \times_{R-act,M,conj} Z_G$, through $Z_G \hra G$ we get $Z_G \times M \hra Aut_M(F)$. Let $z \in  Z_G$. If $f \ms fz$, then $fh \ms fzh$ which equals $fhz$. This means that $Z_G$ action on $F$ through $G$-bundle automorphism coincides with the restriction of the action map $a: F \times G \ra F$ from $G$ to $Z_G$. This describes $Z_G \times M \hra Aut_M(F)$. From now on, let $\Aut(F)$ denote $\Gamma(M,Aut_M(F))$. So we have a description of $Z_G \ra \Aut(F)$, which is an inclusion. 
 
 More generally, let us describe $Z_G \ra \Aut(\cE)$ by $\Gamma$-$G$ bundle theory (cf (\ref{sbs})) as follows. Assume $\cE$ corresponds to a $\Gamma$-$G$ bundle $E \ra Y$ on a cover $Y \ra X$.  The composite $Z_G \stackrel{\phi}{\ra} \Aut(\cE)= \Aut_{\Gamma-G}(E/Y) \hra \Aut_Y(E)$ identifies with the natural inclusion $Z_G \hra \Aut_Y(E)$ for the $G$-bundle case described above. Therefore we have
 $Z_G \ra \Aut(\cE)$ is an inclusion and it is given by $\phi$.
 \end{proof}

 \begin{lem} The composite $Z_G \ra \Aut(\cE) \ra \Aut(\cE_x) \ra \Aut(\cE_x(T))=T$  identifies with the natural inclusion $Z_G \hra T$. 
 \end{lem} 
 \begin{proof}  Therefore $Z_G$-action on $\cE_x$ factors via the {\it natural} inclusion $Z_G \hra \cG^{\mathbf{a}}_x$ and $\cG ^{\mathbf{a}}_x$-action on $\cE_x$. Thus the composite $Z_G \hra \Aut(\cE) \ra \Aut(\cE_x)$ remains injective. Since $Z_G$-action on $\cE_x$ is via $Z_G \hra \cG^{\mathbf{a}}_x= \cG_{\mathbf{a}} \otimes k$, and by Lemma \ref{redq=T} the reductive quotient $\cG_{\mathbf{a}}/\cG^u_{\mathbf{a}}$ is isomorphic to $T$ canonically, so 
 \begin{enumerate}
 \item the homomorphism $Z_G \ra \Aut(\cE_x(T))$ is also an inclusion.
 \item Further, the $Z_G$-action on $\cE_x(T)$ is via the natural inclusion $Z_G \hra T$, coming from $Z_G \hra \cG_{\mathbf{a}} \otimes k \ra T$, followed by the right $T$-action on $\cE_x(T)$. 
 \end{enumerate} Now, $\Aut(\cE_x(T))=T$ because $T$ is abelian. Thus $Z_G  \hra \Aut(\cE) \ra \Aut(\cE_x) \ra \Aut(\cE_x(T))=T$  identifies with the natural inclusion $Z_G \hra T$.
 \end{proof}  
 
 We see that $T=\Aut(\cE_x(T))=\cE_x \times_{\cG_{\mathbf{a}} \otimes k} T \stackrel{(\Id,\omega)}{\lra} \cE_x \times_{\cG_{\mathbf{a}} \otimes k} \GG_m= \Aut(L_{\omega,S})=\GG_m$ is given by $\omega: T \ra \GG_m$ because the $\cG_\mathbf{a} \otimes k$ action is trivial on the second factor.  So $Z_G \hra \Aut(\cE_x(T)) \ra \Aut(L_{\omega,S})$ identifies with $Z_G  \hra T  \stackrel{\omega_\alpha}{\lra} \mathbb{G}_m $. Thus the weight of $L_\omega \ra \cM_X(\cG^{\mathbf{a}})$ is given by  $Z_G \hra T \stackrel{\omega}{\ra} \GG_m$. 
To summarize,  \begin{equation} \label{wtLepsilon} wt(L_{\sigma})=wt(L_{\omega_\alpha})=\omega_\alpha|_{Z_G}: Z_G \hra T \stackrel{\omega_\alpha}{\lra} \GG_m.
\end{equation} 
\end{proof}

\subsection{The General case} Let $\theta$ denote the highest root of $G/\CC$. For simple roots $\alpha$, let $a^\vee_\alpha$ be integers defined by the relation $\theta^\vee=\sum a^\vee_\alpha \alpha^\vee$. Set $a^\vee_{\alpha_0}=1$. For any facet $\sigma_x$ we define $ l_{\sigma_x}= GCD \{ a_\alpha^\vee | \alpha \in Y^{\sigma_x}  \}$. We define $ f =LCM \{l_{\sigma_x} | x \in \cR \}$.

\begin{proof}[Proof of Main Theorem \ref{mt2}] By Proposition \ref{bgp} $\ker(Br(M_X^{rs}(\cG)) \ra Br(\cM_X(\cG))$ is the cokernel of $wt:\Pic(\cM) \ra \Hom(Z_G,\GG_m)$. We place ourselves in the setup of the proof of Theorem \ref{genstacks}. Let $\cQ :=\prod_{x \in \cR} \cF l_{\sigma_x}$. So the following square

\begin{equation*}
\xymatrix{
\Pic(\cM^{\mathbf{a}}) \ar@{^{(}->}[r]^{q^*} & \Pic(\cF l_{\mathbf{a}}^{\times \cR})  \\
\Pic(\cM) \ar@{^{(}->}[r]^{q_0^*} \ar@{^{(}->}[u]^{\pi^*} & \Pic(\cQ)  \ar@{^{(}->}[u]^{p^*}
}
\end{equation*}
is a pull-back square. By Theorem \ref{constantcy}, the image of $q^*$ consists of line bundles with equal central charge in each factor. For $x \in R$, $p^*$ maps to line bundles whose central charge is a multiple of $l_{\sigma_x}$. Therefore the image of $q_0^*$ consists of line bundles whose central charge is a multiple of $f$ in each factor.

Let $m \in \ZZ$.  For $x \in \cR$, let $I^x_m$ denote the set of $|Z^x|$-uple integral solutions $e^x_m:=(\cdots, n^{m,x}_\alpha, \cdots) \in \ZZ^{Z^x}$ to  \begin{equation} \sum_{\alpha \in Z^x} n^{m,x}_\alpha a_\alpha^\vee = mf. \end{equation} Then for each solution $e^x_m \in I^x_m$, we have  a line bundle $L(e^x_m)$ on $  \cF l_{\sigma_x}$ of central charge $mf$ given by the box-tensor product over ${\alpha \in Z^x}$ of $ L_{\epsilon_\alpha}$ raised to the power $n_\alpha^{m,x}$. For $e=(\cdots,e^x_m,\cdots) \in \prod_{x \in \cR} I^x_m$, the line bundle  \begin{equation} L:=\boxtimes_{x \in \cR} L(e^m_x) \ra \cQ \end{equation} descends to $\cM_X(\cG)$. Further we argued above that  all line bundles on $\cM_X(\cG)$ are exactly of this form. For each  $e^x_m:=(\cdots,n^{m,x}_\alpha,\cdots) \in I^x_m$, consider the weight  \begin{equation} \omega(e^m_x)= \sum_{\alpha \in Z^x} n^{m,x}_\alpha \omega_\alpha \end{equation} of $G/\CC$. The weight of $L$ can be computed by the equation (\ref{wtLepsilon}) and it works out to  \begin{equation} Z_G \hra \prod_{x \in \cR} T \stackrel{\prod \omega(e^m_x)}{\lra} \prod \GG_m \stackrel{\prod}{\lra} \GG_m. \end{equation} Now varying over all line bundles on $\cM_X(\cG)$ is equivalent to varying over $e \in \cup_{m \in \ZZ} \prod_{x \in \cR} I^x_m$. To prove the theorem, it suffices to show that we can restrict ourselves to the case $m=1$. To this end, we shall show that  \begin{equation} I^x_m=I^x_1 + \cdots + I^x_1 \end{equation} where by a sum of sets we mean all possible sums in $\ZZ^{Z^x}$ of $m$-many elements from $I^x_1$. By definition of $I^x_1$, it follows that it is non-empty. Consider the functional  \begin{equation} (\cdots, a_{\alpha}^\vee, \cdots): \ZZ^{Z^x} \ra \ZZ \end{equation} and let $K$ be its kernel.  Let $z_1$ be arbitrary such that $z_1 \ms mf$. Pick $z \in I^x_1$. So $z  \ms f$. Thus $ z_1-mz =k \in K$. Now $z+k \in I^x_1$. Thus $z_1=(m-1) z + (z+k)$. \end{proof}

\section{Some computations} \label{somecomp}
In this section, we will use results from \cite[\S 1.2 Tits]{tits}. The notation will also be from (loc. cit) and \S \ref{onept}. Let $\Lambda$ (resp. $\Lambda^r$) denote the weight lattice (resp. root lattice) of $G/\CC$. We have $\Hom(Z_G,\GG_m)=\Lambda/\Lambda^r$. It is called the cocenter. We will denote as $C^*(\Delta)$. For $\alpha \in \mathbf{S}$ let $\omega_{\alpha}$ denote the fundamental weight, where for $\alpha_0$, we take the trivial weight. Let $\overline{\omega}_\alpha$ be the canonical image of $\omega_\alpha$ in $\Hom(Z_G,\GG_m)$ and set $\overline{\omega}_i=\overline{\omega}_{\alpha_i}$  for $ i \geq 0$.
If $\sigma=\sigma^{\alpha_i}$, then set $\cG^{i}=\cG^{\sigma}$. In the third column, we put the fundamental weight corresponding to $\alpha_i$. By $(i,j)$ we mean the GCD of $i$ and $j$. If $\alpha \in \mathbf{S}$ is the $i$-th root, then let $\cG^i:= \cG^{\alpha}$.
\begin{center}
\begin{tabular}{ |c|c|c|c| } 
 \hline
 Type & $C^*(\Delta)$ & weight $\overline{\omega}_i$ & $Br(M_X^{rs}(\cG^{i}))$ \\ 
 \hline
 $A_l$ & $\ZZ_l$ & $\overline{\omega}_i=i \overline{\omega}_1$ & $\ZZ_{(i,l+1)}$ \\ 
 \hline
 $B_l$ & $\ZZ_2$ & $\overline{\omega}_i=2 \overline{\omega}_l, i \neq l$ & $\ZZ_2 \, \text{for} \, i \neq l, $ \\ 
 & & & $e \,\, \text{for} \, \, i=l$ \\
 \hline
 $C_l$ & $\ZZ_2$ & $\overline{\omega}_i= i \overline{\omega_1}$ & $e$ \, $i$ \, \text{odd} \\
 & & & $\ZZ_2$ \, $i$ \, \text{even} \\
 \hline 
 $D_l$ & $\ZZ_4$  & $\overline{\omega}_{l-1}=\ol{\omega}_l^{-1}$ & $e$ \\
 & $l$  \text{odd} & $\ol{\omega}_1=\ol{\omega}_l^2$ & $\ZZ_2$ \\
 && $\ol{\omega}_i=i \ol{\omega}_1$ & $\ZZ_2$ \, \text{for} $i$ \, \text{odd} \\
 && \,\, \text{for} \, \, $i \leq l-2$ & $\ZZ_4$ \, \text{for} $i$ \, \text{even} \\
\hline 
& $\ZZ_2 \times \ZZ_2$ & $\ol{\omega}_{l-1}$, $\ol{\omega}_l$ & $\ZZ_2$ \\
& $l$ \text{even} & $\ol{\omega}_1=\ol{\omega}_{l-1} + \ol{\omega}_l$ & $\ZZ_2$ \\
& & $\ol{\omega}_i=i \ol{\omega}_1$  & $\ZZ_2$ \, \text{for} \, $i$ \, \text{odd} \\
& & \, \text{for} $i \leq l-2$ & $\ZZ_2 \times \ZZ_2$ \, $i$ \, \text{even} \\
\hline
$G_2$ & $e$ & - & $e$ \\
\hline
$F_4$ & $e$ & - & $e$ \\
\hline
$E_6$ & $\ZZ_3$ & $\ol{\omega}_i= i \ol{\omega}_1$ & $\ZZ_{(3,i)}$ \\
\hline 
$E_7$ & $\ZZ_2$ & $\ol{\omega}_i$, \, $i=4,6,7$ & $e$ \\
& & $\ol{\omega}_i$ \, $i \neq 4,6,7$ & $\ZZ_2$ \\
\hline
$E_8$ & $e$ & - & $e$ \\
\hline
\end{tabular}
\end{center}

\section{Cross-checks} \label{crosscheck}
\subsection{With \cite[Biswas-Holla]{bh}}
The case of \cite{bh} is that of principal $G$-bundles, where $G$ is semi-simple. The set $\cR$ is reduced to a single element, say $\{x\} \in X$. The parahoric group scheme is $G_A \ra \Spec(A)$ where $A=\CC[[t]]$. So the facet $\sigma$ is the origin of the apartment $\cA_T$. Only the affine simple root $\alpha_0:=\delta-\theta \in \mathbf{S}$ (cf (\ref{roots})) does not vanish at $\sigma$. So $\sigma = \sigma^{\alpha_0}$ and also $f=l_{\sigma}=a_{\alpha_0}^\vee=1$. Thus $\omega_{\alpha_0}$ is the trivial weight. Thus by Theorem \ref{mtonept}, it follows that  $Br(M)=\Hom(Z_G,\GG_m)=Z_G^\vee$. This agrees with \cite{bh}.

\subsection{With \cite[BBGN]{bbgn}} The case of \cite{bbgn} is that of the moduli space of vector bundles of rank $n$ and degree $d$ of fixed determinant $\xi$. Here below, we proceed to reinterpret this moduli space in the context of this paper which is that of torsors under Bruhat-Tits group scheme. We refer the reader to \cite[\S 3.4]{bbp} for a summary of $\Gamma$-$G$ bundle  theory of \cite{bs}.

In the case $d=0$, by taking a $n$-th root of $\xi$, we may reduce to the previous case of \cite{bh}. So without loss of generality, we may suppose that $-n<  -d < 0$. Let us fix a point $x \in X$. By putting the full flag $F_x=V_x$ at $x$ together with parabolic weight $d/n$, the space $M(n,-d, \xi)$ is equivalently the moduli space $ParM(n,-d)$ of  parabolic vector bundles of {\it fixed parabolic determinant}. Note that the parabolic degree has become zero. By the well-known correspondence between $\Gamma$-$SL_n$ bundles and parabolic vector bundles of fixed parabolic determinant (\cite[\S 3.4]{bbp}), $ParM(n,-d)$ is isomorphic to $M^{\tau}_Y(\Gamma,SL_n)$ which parametrizes $S$-equivalences classes of $\Gamma$-$SL_n$ bundles on a cover $p: Y \ra X$ where for $y \in Y$ lying over $x$, and denoting $\zeta_n$ the $n$-th root of unity, the isotropy representation $\rho_y: \Gamma_y \ra Aut(E_y)$ is given by $\zeta_n \ms \zeta^{d}_n[Id]_{n \times n}$. Since $\zeta_n^{d}=\zeta_n^{d-n}$, we may rewrite $\rho_y$ equivalently as 
\begin{equation}
\rho_y(\zeta_n)=\begin{pmatrix} \zeta_n^d Id_{(n-d) \times (n-d)} & 0 \\ 0 & \zeta_n^{d-n} Id_{d \times d}
\end{pmatrix}.
\end{equation}
As in \cite[Remark 2.2.9]{bs}, let $\omega$ (resp. $z$) be a local parameter around $y$ (resp. $x$). So $z=\omega^n$. Then if $L$ is the quotient field of the completion of the local ring at $y$, we set a one parameter subgroup $\Delta: \GG_{m,L} \ra SL_n$ by 
\begin{equation}
\Delta(\omega)=\begin{pmatrix}
\omega^d Id_{(n-d) \times (n-d)} & 0 \\ 0 & \omega^{d-n} Id_{d \times d}
\end{pmatrix}.
\end{equation}
Thus to $\Delta$ we can correspond a rational 1-PM subgroup $\theta=\frac{1}{n} \Delta$ in the apartment $\cA_T=Y(T) \otimes_\ZZ \RR$. Notice that only the affine simple root $\alpha_d$ of $SL_n$, which is $\epsilon_d-\epsilon_{d+1}$, does not vanish on $\theta$. Thus $\theta$ belongs to the facet $\sigma^Y$ where $Y=\{ \alpha_d\}$.

Set $B=\widehat{\cO}_{Y,y}$ and $N_y=Spec(B)$ and $U_y=Aut_{\Gamma_y-SL_n}(E|_{N_y}) \ra N_y$  the unit group scheme of local automorphisms of a $\Gamma$-$SL_n$-bundle, say $E$. By \cite[Theorem 2.3.1, Proposition 5.1.2]{bs} we have an isomorphism of group schemes
\begin{equation} U_y=\cG_{\sigma^{\alpha_d}}.
\end{equation} Thus in (\ref{gpscheqn}), we see that $p_*^{\Gamma} {}_EG = {\mathcal 
G}$ where $\cG \ra X$ is obtained by gluing $\cG_{\sigma^{\alpha_d}} \ra \Spec(\widehat{\cO}_{X,x})$ with $SL_n \times X \setminus \{x\}$. Now by (\cite[\S 3.4]{bbp}), $M_Y^{\tau}(\Gamma,SL_n)$ may be interpreted as $M(\cG)$, where we take the weight $\theta$ to check semi-stability conditions of parahoric torsors.  We are in the seting of Theorem \ref{mtonept}: since $\sigma=\sigma^{\alpha_d}$, so we consider $Z_G \ra T \stackrel{\omega_d}{\ra} \GG_m$. Thus if $d=0$, then $Br(M(\cG))=\ZZ/n\ZZ$. If $d \neq 0$, then $\omega_d=\epsilon_1+\cdots+\epsilon_d$ where $\epsilon_i:T \ra \GG_m$ on $T$, which is split, is $T \hra \GG_m^n$ followed by projection onto the $i$-th factor. Further $f=l_{\sigma}=a_{\alpha_d}^\vee=1$. By Theorem \ref{mtonept}, $Br(M^{rs})=Hom(Z_G,\GG_m)/<Z_G \ra T\stackrel{\omega_d}{\ra} \GG_m>$. Now $Z_G$ is generated by $\zeta_n Id_{n \times n}$. Thus $\omega_d:Z_G \ra \GG_m$ sends $\zeta_n \ms \zeta_n^d$. So the order of $\omega_d$ restricted to $Z_G$ is $n/(n,d)$. Now the result $Br(M(n,d))=\ZZ/(n,d)\ZZ$ in \cite{bbgn} follows.

\begin{rem} \label{proofstrategydiff} We want to compare the proof strategy of \cite[BBGN]{bbgn} with our paper. After Proposition \ref{bgp}, the Brauer group computation reduces to the computing the weight homomorphism (cf (\ref{wt})). As we remarked in \S \ref{redcokerwt}, $L_{\cE}: Z_G \ra \GG_m$ is independent of $\cE$ in a connected moduli stack. In \cite{bbgn}, this is computed through \cite[Dr\'ezet-Narasimhan, Prop 5.1]{dn} by taking $\cE$ to be a semi-stable vector bundle of maximal automorphism group i.e the direct sum of $(n,d)$-many copies of a stable vector bundle of rank $\frac{n}{(n,d)}$ and degree $\frac{d}{(n,d)}$. In our paper we use instead \cite[Biswas-Hoffmann, Prop 7.2]{BHf1} to compute the weight homomorphism. Its proof uses several ingredients from \cite[Biswas-Hoffmann]{BHf3} which tie together in \cite[ Prop 5.2.11]{BHf3}. Let us make this more precise. Let $\cM_G$ denote the moduli stack of $G$-bundles on $X$. In \cite{BHf3}, the weight homomorphism is computed explicitly for the determinant of cohomology line bundle on $\cM_{\GG_m}$ in Lemma 4.4.1 and then generalized to $\cM_{SL_n}$ in Corollary 4.4.2. It is generalized to the case of any line bundle for any semi-simple simply-connected group $G$ in Prop 4.4.7(iii) and for any reductive $G$ in \cite[Section 5]{BHf3}. More precisely, it is encoded in the description of the maps $\Pic(\cM^d_G) \ra NS(\cM^d_G)$ and $NS(\cM^d_G) \ra NS(\cM^\delta_{T_G})$ where $d \in \pi_1(G)$, $\delta \in \pi_1(T_G)$ and $NS$ denotes the N\'eron-Severi group. After these preparations, \cite[Prop 5.2.11]{BHf3} is deduced. So to compute the weight homomorphism, the proof strategies of \cite{bbgn} and our paper are different.
\end{rem}

\subsection{With \cite[Biswas-Dey]{bd}} In \cite{bd}, one treats the case  parabolic vector bundles of fixed determinant say $\xi$. Let us first consider the case when the parabolic degree is zero, since by taking $n$-root of $\xi$, this case corresponds directly to the case $G=SL_n$ in our paper. For simplicity, we may suppose that there is only one parabolic point $x$. At $x$, suppose that the flags are $0 \neq F_a \subset \cdots \subset F_1 = E_x$, where $dim(F_i)=d_i$. Then the affine roots would be $\{\alpha_{d_i} | 1 \leq i \leq a_x \}$.  Since for $SL_n$, $a^\vee_{\alpha}=1$ for all $\alpha \in \mathbf{S}$, so $f=l_{\sigma}=GCD(\{a_{\alpha_{d_i}}^\vee | 1 \leq i \leq q \})=1$. Thus the weights would be $\{ \omega_{d_i} | d_i \neq 0 \}$. For $SL_n$ the fundamental weight $\omega_d$ equals $\epsilon_1 + \cdots + \epsilon_d$. Viewing $Z_G$ as generated by $\zeta_n Id_{n \times n}$, we have $\omega_d(\zeta_n)=\zeta_n^d$. So by Theorem \ref{mtonept}, $Br= \ZZ_{(n,d_i| 1 \leq i \leq a_x)}.$ This equals $\ZZ_{(n,d_i-d_{i-1})}$ which is the description in \cite{bd} in terms of the flag-type. The case of multiple parabolic points can be treated as in the previous section. The case of a general $d$ can be handled as we have done above for \cite{bbgn}. This changes the answer to $\ZZ_{(n,d,d_i^x| 1 \leq i \leq a_x, x \in \cR)}$.

\section{A possible proof strategy} \label{strategy}
Let us review how the proof strategy of \cite[Biswas-Dey]{bd} may used to prove our result. For simplicity let us consider the case of only one point $p$. Let $\cI$, $G$, $\cG$ denote the group schemes on $X$ obtained by patching  the constant group scheme $ G \times X \setminus \{p\}$ with a local Iwahori group scheme, or the constant group scheme or an arbitrary Bruhat-Tits group scheme on $Spec(\widehat{\cO}_{X,p})$ respectively.
We have a diagram of Hecke modification (cf \cite{bs}) between the associated stacks
\begin{equation}
\xymatrix{
&\cM(\cI) \ar[rd] \ar[ld] & \\
\cM(G) &  & \cM(\cG)
}
\end{equation}
The idea is essentially to first deduce results on moduli  space of $\cI$-torsors from $G$-bundles and then deduce results for general $\cG$-torsors.

For vector bundles with fixed parabolic structure, Thaddeus (cf \cite{thaddeus}) proved that moduli spaces corresponding to different choice of weights have isomorphic blowups along closed subschemes of codimension at least two. It follows then that their Brauer groups are isomorphic. So once the quasi-parabolic structure is fixed, it becomes sufficient to prove the results for small weights. Now let us suppose below that this result is generalized also to the case of moduli of parahoric torsors.

For small weights, we have a forgetful map
$$ \pi_0: \cP \cM_s \ra \ol{\mathcal{N}}$$
from the moduli $\cP \cM_s$ of stable parabolic bundles to the moduli of semi-stable vector bundles by associating the underlying vector bundle. Let $\mathcal{N} \subset \ol{\mathcal{N}}$ denote the stable locus. It is simply connected (cf \cite{bbgn}) and it is known that the moduli of stable $G$-bundles is also simply-connected (cf \cite{blr}). By considering the Leray Spectral sequence for $\pi_0$ of the sheaf defined by $\GG_m$, we deduce the exact sequence \begin{equation} \label{bdey} Pic(Flag) \stackrel{\theta}{\rightarrow} Br(\ol{\mathcal{N}}) \ra Br(\cP \cM_s) \ra 0 ,
\end{equation} from which $Br(\cP \cM_s)$ is deduced. Analogus to $\pi_0$ a similar forgetful map between the moduli spaces $\pi: M_X^{rs}(\cI) \ra M^{ss}_X(G)$ may be estabilished for small weights. By our results, $M_X^{rs}(\cI)$ is the smooth locus of $M^{ss}_X(\cI)$. Let $M \subset M_X^{rs}(\cI)$ be the inverse image of $M^{rs}_X(G)$. Analogus to (\ref{bdey}) a sequence for $\pi: M \ra M^{rs}_X(G)$ may also be established. The group $Br(\ol{\mathcal{N}})$ is cyclic. The role of its generator   will be played by the generator of $Br(M^{rs}_X(G))$ which is the class of the $Z_G$-gerbe $\cM_X^{rs}(G) \ra M_X^{rs}(G)$  in $H^2(M_X^{rs}(G),\GG_m)$ (cf \cite{bh}).

If the complement of $M$ in $M^{rs}_X(\cI)$ has codimension at least two then one may be able to deduce $Br(M^{rs}_X(\cI))$.

By similarly adjusting weights and estimating codimensions, one may establish a map from $\pi': M \ra M^{rs}_X(\cG)$ where $M$ is an open subset $M \subset M^{rs}_X(\cI)$. By considering the descent spectral sequence of $\pi'$, it may be possible to deduce $Br(M^{rs}_X(\cG)$ from $Br(M)$.

\section{Appendix: Cohomological descent}
\subsection{Ind-schemes, morphisms and fiber products} \label{indschemes}
Let $(I,\leq)$ be a partially ordered set. Thus the relation $\leq$ is reflexive and transitive. We will view it as a category. For applications in this paper, $(I,\leq)$ will be the Iwahori Weyl group $\tilde{W}$ with Bruhat order.

%Further, given $i,j \in I$ there exists a $k \in I$ such that $i \leq k$ and $j \leq k$. 

Recall an ind-scheme $\mathbb{X}$ indexed by $I$ is a collection of schemes $\{X_i\}$ together with a collection of closed immersions $i_{j,k}:X_j \hra X_{k}$ for $j \leq k$ where $j,k \in I$. We shall assume that each 
$X_n$ is a finite dimensional scheme. Let $\mathbb{X}$ and $\mathbb{Y}$ be two ind-schemes indexed by categories $I$ and $J$. By a morphism $f: \mathbb{X} \ra \mathbb{Y}$ between ind-schemes we mean the following (cf \cite{sk}) :  for
every $n \in I$, there exists $m(n) \in J$ and $f_n: X_n \ra Y_{m(n)}$ a morphism of schemes such that if $i_1 \leq i_2$, then $f_{i_2}$ restricts to $f_{i_1}$. Thus for each non-empty $I$, the category of schemes embeds diagonally into the category of ind-schemes indexed by $I$. For $n \in I$, consider the partially ordered set $I_{\geq n}$ of elements $j \geq n$. Then $X_n$ extends to an ind-scheme on $I_{\geq n}$. Let us denote the ind-scheme again by $X_n$. So we may define a morphism from ind-scheme indexed by $I_{\geq n}$ to ind-scheme indexed by $I$
\begin{equation} \label{xntoxx}
X_n \ra \XX
\end{equation} using the $i_{j,k}$ for $j,k \geq n$.
%{\it In this paper, we will only consider the case $I=J$ and $m(n)=n$. So all morphisms between ind-schemes will satisfy this property.}
Let $a: \bX \ra \cX$ and $b: \bY \ra \cX$ be two ind-schemes indexed by $I$ and $J$ respectively. Then the product category $I \times J$ has a partial order defined by: $(i_1,j_1) \leq (i_2,j_2)$ if $i_1 \leq i_2$ and $j_1 \leq j_2$. By the fibered product 
\begin{equation} \label{fpindsch} \bX \times_\cX \bY
\end{equation} we mean the ind-scheme $\{ X_i \times_\cX Y_j\}_{(i,j) \in I \times J}$ together with the natural closed immersions. We do have projection morphisms $\bX \times_\cX \bY \ra \bX$ and to $\bY$ using the projection functors from $I \times J$ to $I$ and $J$. We also have the diagonal morphism 
\begin{equation} \label{diagonal}
\bX \ra \bX \times_\cX \bX,
\end{equation}
by sending $X_i \ra X_i \times_\cX X_i$ for $i \in I$. So given $\bX \ra \cX$ we can make a simplicial ind-scheme augemented by a stack. If $J$ is filtered i.e given $j, k \in J$ there exists $i \in J$ such that $j \leq i$ and $k \leq i$, then given $f: \bX \ra \bY$, by the nerve construction, we can make a simplicial ind-scheme augemented by an ind-scheme.

\subsection{Sites of ind-schemes} \label{sitesofindscheme}
\begin{rem} \label{whybitetale} Let us explain why we have chosen to work with the Big-\'etale sites of ind-schemes $\bX$ and stacks $\cX$ in this section.  Further, we use the fact that Big-\'etale site is functorial \cite{olsson}. In particular, in our context, it is well-adapted to define a morphism of topoi. More precisely, for $a: \acute{E}t(\bX) \ra \acute{E}t(\cX)$ the condition `$a^*$ commutes with finite limits' is easier to check for the big-\'etale site than it is for other sites. Similarly to determine some results for $L_{X^\circ}(G)$ it is easier to pass to the analytic site from big-\'etale than other sites. 
\end{rem}

\begin{rem} \label{whycomor} Note that for a morphism $a: \bX \ra \cX$, and a covering $u: U \ra \cX$, the fiber product $\bX \times_\cX U$ is only an ind-scheme. Further, the notion of a morphism between ind-schemes following \cite{sk} has some ambiguity with respect to the indexing set. So to define a {\it morphism} of small \'etale sites $\acute{e}t(\bX) \ra \acute{e}t(\cX)$ (cf \cite[Definition 2.1]{giraud}) seems to require some foundational work such as defining \'etale and smooth morphisms between ind-schemes.  This is avoided in this paper by using the notion of {\it comorphism} of sites following \cite[(1.1) and Definitions 1.1, 1.2, 2.3]{giraud} recalled below.
\end{rem} 

Let $\cC$ be a category. A seive $R$ of $\cC$ is a subcategory characterised by its set of objects by the relation: if $f: X \ra Y \in Mor(\cC)$ then $Y \in R \implies X \in R$. The set of all seives is denoted $\emptyset(\cC)$. A functor $f: \cC' \ra \cC$ sends a seive $R \in \emptyset(\cC)$ to $R^f:=R \times_{\cC} \cC'$. Suppose that for every $S \in Ob(\cC)$ we are given a non-empty set $J(S)$ of seives of the comma category $\cC/S$ satisfying the condition:  
\begin{enumerate} \label{s1}
\item[(S.1)]  $\forall f:T \ra S, J(S)^f \subset J(T).$
\end{enumerate} One says that a collection $\{J(S) \subset \emptyset(\cC/S)| S \in Ob(\cC)\}$ define a topology on $\cC$ if
\begin{enumerate}
\item[(S.2)] $\forall S \in Ob(\cC)$, $\forall C \in \emptyset(\cC/S)$, if there exists $R \in J(S)$ such that $\forall f:X \ra S \in Ob(R)$, $C^f \in J(X)$, then $C \in J(S)$.
\item[(S.3)] $\forall S \in Ob(\cC)$, if $C \in \emptyset(\cC/S)$ and if there exists $R \in J(S)$ such that $R \subset C$, then $C \in J(S)$.
\end{enumerate}
A site is a category with a topology. The $R \in J(S)$ are called refinings of $S$.
%Let $U$ be an object of a category $\cC$. A seive $R$ on $U$ is a subfunctor of $h_U:\cC^{op} \ra Sets$. A covering $\cU$ of $U$ defines a seive $R$ consisting of those morphisms to $U$ which factor through some $U_i \in \cU$.

\begin{defi} A comorphism of sites $f:\cE \ra \cE'$ is a functor on the underlying categories $f: E \ra E'$ such that for every object $X \in ob(E)$, and every seive $R'$ in $(\cE' \downarrow X')$ where $X'=f(X)$ in $\cE'$, the seive $R' \times_{(E' \downarrow X')} (E \downarrow X)$ belongs to $\cE$.
\end{defi}
Let us recall how a covering gives rise to a seive.  Let $\cR=\{r_i: R_i \ra S | i \in I\}$ be a family of arrows with the range. They define a seive $R$ of $\cC/S$ whose objects are $f: X \ra S$ such that there exists a $i \in I$ such that $\Hom_S(X,R_i) \neq \emptyset$. {\it So to describe the topology whenever it is convenient, we will specify only the coverings.} The verification of (S.1) will be usually obvious.

Let $C$ be the category of schemes with \'etale topology. Let $\mathbb{X}$ be an ind-scheme. We define the big \'etale site $\acute{E}t(\mathbb{X})$ of $\mathbb{X}$ as follows: 
\begin{enumerate}
 \item[(1)] objects are morphism $u:U \ra \bX$ factoring through $X_n$ for some $n \in I$ (\ref{xntoxx}). Such $u: U \ra \bX$ are called opens of $\bX$.
 \item[(2)] a morphism from $u:U \ra \bX$ to $u':U' \ra \bX$ is a morphism $f:U \ra U'$ of schemes over $\bX$.
 \item[(3)] a covering of $u: U \ra \bX$ is just a covering of $U$ in $C$.
\end{enumerate}

Let $f: \mathbb{X} \ra \mathbb{Y}$ be a morphism of ind-schemes. We obtain a comorphism of sites 
\begin{equation}
\acute{E}t(f): \acute{E}t(\bX) \ra \acute{E}t(\bY),
\end{equation}
where the functor $\acute{E}t(f)$ sends $u: U \ra \bX$ to $f \circ u: U \ra \bY$. 

Let $\tilde{\mathbb{X}}$ (or sometimes $\mathbb{X}_{Et}$) denote the category of sheaves of sets on $\acute{E}t(\mathbb{X})$. It admits the following alternate description. A sheaf of sets $\cF$ on an ind-scheme $\mathbb{X}$ is a collection of sheaves of sets $\cF_n$ on $X_n$, where $n \in I$, together with 
morphisms $\phi_{j,k}^\cF:i_{j,k}^{-1}(\cF_k) \ra \cF_j$, whenever $j \leq k$, satisfying an obvious co-cycle condition: let $j \leq k \leq l$, then
\begin{equation}
\phi_{j,l}=\phi_{j,k} \circ i_{j,k}^{-1}(\phi_{k,l}).
\end{equation}
Let $Ab(\mathbb{X})$ denote the subcategory of abelian group objects in $\tilde{\mathbb{X}}$. It is well known that the category of abelian presheaves on any Grothendieck site is an abelian category with enough injectives (cf \cite[(I.2.1.1)(I.2.1.2)]{tamme}).

\subsubsection{Pull-back and pushforward of (pre)sheaves} Let $\cF \in \tilde{\bY}$ and $f: \bX \ra \bY$ be a morphism. Let $u: U \ra \bX$ be an open of $\cX$. We define {\it pull-back presheaf} by
\begin{equation}
(f^*\cF)_{(U,u)}=\cF_{(U,f \circ u)}.
\end{equation}
The {\it pull-back sheaf} is obtained by sheafifying it following \cite[Prop 2.6, $2^\circ$]{giraud}.

Similarly, let $\cG \in \tilde{\bX}$. We view it as a presheaf. Let $u: U \ra \bY$ be an open of $\bY$. Consider the comma category $(\acute{E}t(f),u)$. It consists of commutative diagrams
\begin{equation} \label{pushforward}
\xymatrix{ 
V \ar[r]_v \ar[d]^f & \bX \ar[d]^f \\
U \ar[r]^u & \bY
} \quad
\xymatrix{
V_2 \ar@/^1pc/[rr]_{v_2} \ar[r]_{\theta} \ar[rd]_{f_2} & V_1 \ar[r]_{v_1} \ar[d]^{f_1} & \bX \ar[d]^{f} \\
& U \ar[r]^{u} & \bY
}
\end{equation}
where objects are $v: V \ra \bX$ lying over $u:U \ra \bY$ and morphisms $\theta:V_2 \ra V_1$ fit compatibly as above. It will be convenient to denote $V \ra U$ also by $f$.
 
For push-forward, one considers $(V_1,v_1)$ to be finer than $(V_2,v_2)$ because $\cG_{v_1} \ra \theta_* \cG_{v_2}$. Taking limit over $(\acute{E}t(f),u)$, one defines the {\it push-forward presheaf}
\begin{equation} \label{pushforwardindschmes} (f_* \cG)_{(U,u)}=\varprojlim f_*\cG_{(V,v)}.
\end{equation}
Actually $f_* \cG$ is already a sheaf if $\cG$ is a sheaf by \cite[Prop 2.6, $2^\circ$]{giraud}. Indeed, this follows because pushforward commutes with arbitrary limits and the sheaf condition is formulated in terms of limits. Indeed, by \cite[Chapitre 0, \S 2]{giraudbook} or \cite[(1.2)]{giraud}, a presheaf $F$ is a sheaf on a site $E$ if for every object $S \in ob(E)$, and every refining $R \in J(S)$ the natural map $F(S) \ra \varprojlim (F|_{R})$ is bijective.
By \cite[page 196]{giraud}, the functor $f^*$ commutes with finite limits. So we have a morphism of topoi of sheaves on ind-schemes
\begin{equation} \label{mortopoi}
(f^*,f_*):\tilde{\bX} \ra \tilde{\bY}.
\end{equation}

\subsection{Comorphism between sites of ind-schemes and stacks}
Let $\cX/S$ be an algebraic stack. Let us recall the big-\'etale site $\acute{E}tale(\cX)$ of $\cX$ defined as follows:
\begin{enumerate} \item[(1)] The objects are representable  $1$-morphisms $u:U \ra \cX$ of $S$-algebraic stacks, where $U$ is a scheme.
 \item[(2)] A morphism from $(U_1,u_1)$ to $(U_2,u_2)$ consists of a pair $(\phi,\alpha)$ where 
 \begin{equation}
  \xymatrix{
  U_1 \ar@/_1pc/[rr]^{u_1} \ar[r]^{\phi} & U_2 \ar[r]^{u_2}  & \cX 
      }
 \end{equation} and $\alpha:u_1 \ra u_2 \circ \phi$ is a $2$-morphism.
 
 \item[(3)] Coverings are families $\{(\phi_i,\alpha_i):(U_i,u_i) \ra (U,u), i \in I\}$ such that the $1$-morphism of $S$-schemes
 $\cup_i \phi_i: \cup_i U_i \ra U$ is \'etale and surjective.
\end{enumerate}
We denote by $\cX_{\acute{E}t}$ the big-\'etale topos of sheaves on  $\cX$.
We shall work with Artin stacks in this paper. So we will tacitly assume the existence of a smooth atlas. Further the diagonal map $\Delta: \cX \ra \cX \times_S \cX$ is representable.

By a morphism $a: \bX \ra \cX$  from an ind-scheme to a stack we mean a sequence of morphisms $a_j: X_j \ra \cX$ such that through  $X_j \hra X_k$, $a_k$ restricts to $a_j$. Given an open $u: U \ra \bX$ in $\acute{E}t(\bX)$, since it factors through some $X_n$, so let agree to define $a \circ u$ as $a_n \circ u$.  We thus obtain a comorphism of sites (cf \cite[D\'efinition 2.3]{giraud})
\begin{equation}
\acute{E}t(a): \acute{E}t(\bX) \ra \acute{E}t(\cX),
\end{equation}
where the functor $\acute{E}t(a)$ sends $u: U \ra \bX$ to $a \circ u: U \ra \cX$.

\subsubsection{Pull-back and push-forward of sheaves and presheaves}
To define pullback of (pre)sheaves it will be notationally convenient below to use $(?)^*$ instead of $(?)^{-1}$ even for abelian sheaves. 
 Let $\cF \in \cX_{\acute{E}t}$. We view it as a presheaf. Let $v: V \ra \bX$ be an open of $\bX$. Let us consider the comma category $(v \downarrow \acute{E}t(a))$. It consists of commutative diagrams
\begin{equation} \label{pullback}
\xymatrix{ 
V \ar[r]_v \ar[d]^a & \bX \ar[d]^a \\
U \ar[r]^u & \cX
} \quad
\xymatrix{
& V \ar[r]_v \ar[d]^{a_1} \ar[ld]_{a_2} & \bX \ar[d]^{a} \\
U_2 \ar@/_1pc/[rr]^{u_2} & U_1 \ar[r]^{u_1} \ar[l]_{\theta} & \cX 
}
\end{equation}
of objects $u:U \ra \cX$ fitting below $v: V \ra \bX$ and morphisms $\theta:U_1 \ra U_2$ fitting compatibly as above. It will be convenient to denote $V \ra U$ also by $a$.

For pull-back, one considers the open $(U_1,u_1)$ as finer than $(U_2,u_2)$ because $\theta^* \cF_{u_2} \ra  \cF_{u_1}$.
Taking colimit over  $(v \downarrow \acute{E}t(a))$, one defines 
\begin{equation}
(a^*\cF)_{(V,v)}=\varinjlim a^*\cF_{(U,u)}
\end{equation}
Let $\phi:(V_1,v_1) \ra (V_2,v_2)$ be a morphism between opens of $\mathbb{X}$. We have a morphism 
\begin{equation}
\phi^* (a^* \cF)_{(V_2,v_2)} \ra (a^* \cF)_{(V_1,v_1)}
\end{equation}
because any open $(U,u)$ under $(V_2,v_2)$ is also under $(V_1,v_1)$.
{\it This defines the pull-back presheaf $a^*\cF$.} We note that pull-back commutes with finite limits, and so is afortiori an exact functor. We remark a simplification in the present case. The category $(v \downarrow \acute{E}t(a))$ has $\acute{E}t(a)(v)=a \circ v: V \ra \cX$ as a final object with $a_1=Id_{V}$. Thus 
\begin{equation}
(a^*\cF)_{(V,v)}=\cF_{(V,a \circ v)}.
\end{equation}
{\it For $\cF \in \bX_{\acute{E}t}$, we define the pull-back sheaf $\cF$
as the sheaf associated to this presheaf.}

Similarly, let $\cG \in \bX_{\acute{E}t}$. We view it as a presheaf. Let $u: U \ra \cX$ be an open of $\cX$. The comma category $(\acute{E}t(a),u)$ consists of commutative diagrams
\begin{equation} \label{pushforward1}
\xymatrix{ 
V \ar[r]_v \ar[d]^a & \bX \ar[d]^a \\
U \ar[r]^u & \cX
} \quad
\xymatrix{
V_2 \ar@/^1pc/[rr]_{v_2} \ar[r]_{\theta} \ar[rd]_{a_2} & V_1 \ar[r]_{v_1} \ar[d]^{a_1} & \bX \ar[d]^{a} \\
& U \ar[r]^{u} & \cX
}
\end{equation}
where objects are $v: V \ra \bX$ lying over $u:U \ra \cX$ and morphisms $\theta:V_2 \ra V_1$ fit compatibly as above. It will be convenient to denote $V \ra U$ also by $a$.
 
For push-forward, one considers $(V_1,v_1)$ to be finer than $(V_2,v_2)$ because $\cG_{v_1} \ra \theta_* \cG_{v_2}$. Taking limit over $(\acute{E}t(a),u)$, one defines 
\begin{equation} \label{pushforwardindschemestack} (a_* \cG)_{(U,u)}=\varprojlim a_*\cG_{(V,v)}.
\end{equation}
Let $(\phi,\alpha):(U_1,u_1) \ra (U_2,u_2)$ be a morphism between opens of $\cX$. We have a morphism 
\begin{equation}
(a_* \cG)_{(U_2,u_2)} \ra \phi_* (a_* \cG)_{(U_1,u_1)}
\end{equation}
because any open $(V,v)$ over $(U_1,u_1)$ is also over $(U_2,u_2)$. This defines the pushforward presheaf $a_* \cG$. Actually $a_*\cG$ is already a sheaf because push-forward commutes with arbitrary limits.

\begin{rem} Taking $\acute{E}t(a): \acute{E}t(\bX) \ra \acute{E}t(\cX)$ as $u: E \ra E'$ in \cite[Lemme 2.5]{giraud}, we see that $a^*=u^\bullet$ and $a_*=u_\bullet$ (cf also \cite[Proposition 2.6 (2)]{giraud} where we set $f: \cE \ra \cE'$ as $\acute{E}t(a)$). So the constructions of $a^*$ and $a_*$ are just specializations to our case of the constructions in \cite{giraud} for the case of comorphisms. In particular, one has the following adjunction. We give below a self-contained proof.
\end{rem}

\begin{prop} \label{adjunction} Let $a: \bX \ra \cX$ be a morphism. Let $\cG \in \cX_{\acute{E}t}$ and $\cF \in \bX_{\acute{E}t}$. Then we have $Mor_{\bX}(a^* \cG,\cF) = Mor_{\cX}(\cG,a_* \cF)$.

\end{prop}
\begin{proof} A morphism in $Mor_{\bX}(a^* \cG,\cF)$ is equivalently a functorial assignment for every $(V,v) \in \acute{E}t(\bX)$ of an element in $Mor_{(V,v)}((a^*\cG)_{(V,v)},\cF_{(V,v)})$. This last set is $\varprojlim Mor((a^*(\cG_{(U,u)}),\cF_{(V,v)})$ where one varies over $(U,u) \in \acute{E}t(\cX)$ sitting under $(V,v)$ and $a$ is the corresponding morphism.
Thus a  morphism in $Mor_{\bX}(a^* \cG,\cF)$ is a functorial assigment for $\acute{E}t(\bX) \times \acute{E}t(\cX)$ such that $(V,v)$ sits on $(U,u)$ of an element in $Mor_{(V,v)}(a^* (\cG_{(U,u)}),\cF_{(V,v)})$. Now this last set equals $Mor_{(U,u)}(\cG_{(U,u)},a_*(\cF_{(V,v)}))$ by the usual adjunction between $(a^*,a_*)$ the case of schemes. This same description is found for $Mor_{\cX}(\cG,a_* \cF)$ by similar reasoning.
\end{proof}
Since $a^*$ commutes with finite limits, so we have a morphism of topoi
\begin{equation} \label{mortopoiindschemestack}
(a^*,a_*): \bX_{Et} \ra \cX_{Et}.
\end{equation}

\subsection{Simplicial Ind-schemes}

A simplicial ind-scheme $\bX_\bullet$ is a simplicial object in the category of ind-schemes. Its $n$-simplices will be denoted by $\bX_n$. 
\subsubsection{\'Etale site of simplicial ind-scheme} \label{essis}
Let $\mathbb{Y}_\bullet$ be a simplicial ind-scheme. 
Let us consider the big \'etale site of $\mathbb{Y}_\bullet$ (cf \cite[Defn 6.1]{conrad}). It is a category whose
\begin{enumerate}
\item objects are maps $U \ra \bY_n$ for some $n$.
\item morphims from $U \ra \bY_n$ to $U' \ra \bY_{n'}$ is a pair $(f,\phi)$ that fit in a diagram 
\begin{equation} \label{mor}
\xymatrix{
U \ar[r]_f \ar[d] & U' \ar[d] \\
\bY_{n} \ar[r]^{Y(\phi)}    & \bY_{n'}
}
\end{equation}
where $\phi:[n'] \ra [n]$ is a non-decreasing map and $f:U \ra U'$ is any morphism of schemes.
\item a covering $\{U_i \ra U\}$ of $U \ra \bY_n$ is a collection $(f_i,id)$ such that the images of $f_i$ cover $U$ set-theoretically and each $f_i$ is \'etale.
\end{enumerate}
Let $\tilde{\mathbb{Y}}_\bullet$ denote the category of sheaves of sets on this site. It may equivalently be defined as follows. A sheaf
of sets on $\mathbb{Y}_\bullet$ is a collection $\cF_n \in \tilde{\bY_n}$ of sheaves on $\bY_n$ together with morphism $[\phi]$ of sheaves on $\bY_n$,
\begin{equation}
 [\phi]: Y(\phi)^{-1} (\cF_{n'}) \ra \cF_n
\end{equation}
for every $\phi \in Mor_{\Delta}([n'],[n])$, satisfying an obvious co-cycle relation. Let $Ab(\tilde{\mathbb{Y}}_\bullet)$ denote the sub-category of abelian group objects in $\tilde{\mathbb{Y}}_\bullet$. 

We have a natural restriction morphism $Res_n: \tilde{\bY}_\bullet \ra \tilde{\bY}_n$. It is exact. It has both left and right adjoint \cite[\S 2.4]{olsson}. Using its right adjoint, we can show that there are enough injective objects in the category of abelian presheaves
on {\it simplicial ind-schemes} since there are enough injective objects in the category of abelian presheaves on ind-schemes.  
Further various operations like sheafification relative to this site, formation of images and quotients by 
equivalence relations, checking epimorphisms and monomorphisms, computing inverse and direct limits may be carried out 
"degree-by-degree".  Let $u_\bullet: \mathbb{X}_\bullet \ra \mathbb{Y}_\bullet$ be a map of simplicial ind-schemes. One defines the functors
$$u_{\bullet,*}: \tilde{\mathbb{X}}_\bullet \ra \tilde{\mathbb{Y}}_\bullet \quad u_\bullet^*: \tilde{\mathbb{Y}}_\bullet \ra \tilde{\mathbb{X}}_\bullet,$$
by term-by-term construction of pushforward and pullback.

Let us consider the special case of augmented simplicial spaces $a: \bY_\bullet \ra \cX$. Let $\cX_\bullet$ denote the constant simplicial space and $a_\bullet: \bY_\bullet \ra \cX_\bullet$ the associated map. We thus obtain a comorphism of sites (cf \cite[D\'efinition 2.3]{giraud})
\begin{equation}
\acute{E}t(a_\bullet): \acute{E}t(\bY_\bullet) \ra \acute{E}t(\cX_\bullet),
\end{equation}
where the functor $\acute{E}t(a_\bullet)$ sends $u: U \ra \bY_n$ to $a_n \circ u: U \ra (\cX_\bullet)_n=\cX$.

The category $\tilde{\cX}_\bullet$ is canonically identified with the category of co-simplicial objects in the category of sheaves of sets on $\cX$. For presheaves, the pull-back functor
\begin{equation} \label{pullbackstackindscheme}
a^*_\bullet: \tilde{\cX} \ra \tilde{\bY}_\bullet, 
\end{equation}
defined as $a^*_\bullet(\cG)^n = a_n^*(\cG)$ in each degree and with natural face and degeneracy relations. Let $\cF \in \tilde{\cX}$ and $u: U \ra \bY_n$, then $a_\bullet^*(\cF)_u$ is simply the restriction $(\cF)_{a_n \circ u}$. So pull-back is an exact functor on presheaves. For sheaves, we take pull-back of $\cG$ as a presheaf and then sheafify it.  It has a right adjoint 
\begin{equation} \label{directimageindschemestack} a_{*}: \tilde{\bY}_\bullet \ra \tilde{\cX},
\end{equation}
given by defining $a_*(\cF)$ as the kernel equalizer of the two "face"-morphisms $ a_{0*} \cF^0 \ra a_{1*} \cF^1$. The proof of the adjunction $(a^*_\bullet,a_*)$ is word-to-word generalisation of the standard proof in the case $\bY_\bullet$ is a simplicial scheme. The only non-obvious ingredient was proven in Prop \ref{adjunction}. So we have a morphism of topoi
\begin{equation}
(a_\bullet^*,a_*):(\bY_\bullet)_{\acute{E}t} \ra \cX_{\acute{E}t}.
\end{equation}

\subsubsection{A spectral sequence}
Let $\mathbb{Y}_\bullet$ be a simplicial ind-scheme. Let $\cF_\bullet \in Ab({\bY}_\bullet)$. We define $\Gamma(\mathbb{Y}_\bullet,\cF_\bullet)$ as the kernel equalizer of the two "face"-morphisms
\begin{equation} \label{globalsecsimpsch} \Gamma(\mathbb{Y}_0,\cF_0) \ra \Gamma(\mathbb{Y}_1,\cF_1).
\end{equation}

The following is a mild generalization of cf \cite[Thm 6.11]{conrad}. 
\begin{thm} \label{ss} Let $\mathbb{Y}_\bullet$ be a simplicial ind-scheme. Let $D_+(\mathbb{Y}_\bullet)$ denote the derived category of bounded below complexes of sheaves of abelian groups on $\mathbb{Y}_\bullet$. For any $K' \in D_+(\mathbb{Y}_\bullet)$, there is a natural spectral sequence
$$E^{p,q}_1=H^q(\mathbb{Y}_p,K'|_{\mathbb{Y}_p}) \implies H^{p+q}(\mathbb{Y}_\bullet,K'),$$
where $d^{\bullet,q}_1$ is induced from the differential complex structure along $\mathbb{Y}_\bullet$. 
\end{thm}
\begin{proof} Let $K' \ra I^\bullet$ be a quasi-isomorphism to a bounded below complex of injectives in $Ab(\mathbb{Y}_\bullet)$. Let us consider the $1$-st quadrant double complex
\begin{equation} \label{doublecomplex}
\Gamma(\mathbb{Y}_p,I^q|_{\mathbb{Y}_p})_{p,q}.
\end{equation}
Let us study page one of the spectral sequence that arizes by filtering (\ref{doublecomplex}) first by rows. If we filter (\ref{doublecomplex}) by rows, for instance taking the $n$-th row, notice that
\begin{equation}
H^0(\Gamma(\mathbb{Y}_0,I^n) \ra \Gamma(\mathbb{Y}_1,I^n) \ra \Gamma(\mathbb{Y}_2,I^n) \ra \cdots)=\Gamma(\mathbb{Y}_\bullet,I^n).
\end{equation}
At page one, these groups fit vertically on the "$0$-th" column to make a complex $\cdots \ra \Gamma(\mathbb{Y}_\bullet,I^n) \ra \Gamma(\mathbb{Y}_\bullet,I^{n+1}) \ra \cdots$ with differentials given by $I^n$. So the $0$-column on page one is isomorphic  to $R\Gamma(\mathbb{Y}_\bullet,K')$ in $D_+(\bY_\bullet)$.

\begin{lem} In (\ref{doublecomplex}) all horizontal $H^0$ away from degree zero vanish.
\end{lem}
\begin{proof}  An injective object in $Ch_{\geq 0}(Ab)$ is necessarily acyclic in positive degrees. It suffices to show that for an injective sheaf $I \in Ab(\mathbb{Y}_\bullet)$ the complex
\begin{equation}
\Gamma(\mathbb{Y}_0,I) \ra \Gamma(\mathbb{Y}_1,I) \ra \Gamma(\mathbb{Y}_2,I) \ra \cdots
\end{equation}
is an injective object in $Ch_{\geq 0}(Ab)$. To this end, it suffices to construct 
an exact left-adjoint to the functor $Ab(\mathbb{Y}_\bullet) \ra Ch_{\geq 0}(Ab)$ which sends $\cF \ms \{ \Gamma(\mathbb{Y}_p,\cF|_{\mathbb{Y}_p})\}_{p \geq 0} $ where the differentials are given by alternating sum of face maps. Let $C_\bullet \in Ch_{\geq 0}(Ab)$. Applying the Dold-Kan functor we get a simplicial abelian group. Let $C_n \in Ab$ be the $n$-simplices. We associate to $C_n$ the sheaf $\tilde{C_n}$ which on every non-empty open $U$ of $\mathbb{Y}_n$ takes value $C_n$. These $\tilde{C_n}$ define the abelian sheaf $\tilde{C}_\bullet$ on $\mathbb{Y}_\bullet$.  Let us call this functor $dk:Ch_{\geq 0}(Ab) \ra Ab(\mathbb{Y}_\bullet)$. It is exact. Further we have the desired adjunction
$Hom_{Ab(\mathbb{Y}_\bullet)}(dk \{ C_\bullet \},\cG^\bullet)=Hom_{Ch_{\geq 0}(Ab)}( C_\bullet, \{ \Gamma(\mathbb{Y}_p,\cG|_{\mathbb{Y}_p}) \}).$

\end{proof} 
In conclusion, at page one only groups in column zero survive. Thus the horizontal spectral sequence abuts to the cohomology groups of $R\Gamma(\mathbb{Y}_\bullet,K')$.

Now we study page one of the spectral sequence that arizes by filtering (\ref{doublecomplex}) by columns. A column consists of $\Gamma(\mathbb{Y}_p,I^\bullet|_{\mathbb{Y}_p})$ for a fixed $p \geq 0$. Upon taking homology, we get a spectral sequence $H^q(\Gamma(\mathbb{Y}_p,I^\bullet|_{\mathbb{Y}_p})) \implies H^*(\mathbb{Y}_\bullet,K')$.
Recall that $K' \ra I^\bullet$ is a quasi-isomorphism of bounded below chain complexes
in $Ab(\mathbb{Y}_\bullet)$. So by exactness of the functor restricting sheaves on $\mathbb{Y}_\bullet$ to $\mathbb{Y}_p$ we  have $K'|_{\mathbb{Y}_p} \ra I^\bullet|_{\mathbb{Y}_p}$ is also exact. Further $I^n|_{\mathbb{Y}_p}$ is an injective sheaf. Indeed, the restriction functor $res_n: Ab(\bY_\bullet) \ra Ab(\bY_n)$ has an exact left adjoint given by 
$$(l_n(G))_k = \oplus_{\rho \in \Hom_{\Delta}([n],[k])} \rho^* G.$$
It is exact because $\rho^*$ is exact for each morphism $Ab(\tilde{\bY_k}) \ra Ab(\tilde{\bY_n})$.
So $K'|_{\mathbb{Y}_p} \ra I^\bullet|_{\mathbb{Y}_p}$ is a bona-fide qis of  $K'|_{\mathbb{Y}_p}$ to an injective in $Ch_{\geq 0}(Ab(\mathbb{Y}_p))$. Thus 
$$H^q(\Gamma(\mathbb{Y}_p,I^\bullet|_{\mathbb{Y}_p})) = H^q(\mathbb{Y}_p,K'|_{\mathbb{Y}_p}).$$
So $H^q(\mathbb{Y}_p,K'|_{\mathbb{Y}_p})$ fit to form the page one of the column spectral sequence. Thus we get the desired spectral sequence.

\end{proof}

\subsection{Cohomological descent for maps from an ind-scheme to a stack} \label{cohdessec}

\subsubsection{Morphism of sites}
Let $\epsilon: \bX_\bullet \ra \cX$ be a simplicial ind-scheme over an Artin stack $\cX$.
Let $\epsilon_n: \bX_n \ra \cX$ 
denote the projection morphisms. We have a comorphism of sites (cf \cite[Definition 2.3]{giraud})
\begin{equation}
\epsilon^{-1}:  \acute{E}t(\bX_\bullet) \ra \acute{E}t(\cX)
\end{equation}
which sends $([n],u:U \ra \bX_n)$ to $(U,u \circ \epsilon_n)$.
This morphism induces a pair of adjoint functors (cf \ref{pullbackstackindscheme}, \ref{directimageindschemestack}, \S \ref{essis})
\begin{equation}
 (\epsilon^*,\epsilon_*):  (\bX_\bullet)_{\acute{E}t} \ra \cX_{\acute{E}t}.
\end{equation}

\subsubsection{Cohomological Descent}
We say that $\epsilon_\bullet:\bX_\bullet \ra \cX$ is a morphism of {\it cohomological descent} if
the natural transformation 
\begin{equation} \label{cohdesdefn} id \ra R\epsilon_* \circ \epsilon^*,
\end{equation}
is an isomorphism on the derived category of bounded below complexes of abelian sheaves $D_+(\cX)$ on $\cX$.  We say $\epsilon_\bullet$ is universally of cohomological descent it it is remains so under arbitrary base-change. We shall say that a morphism $\epsilon: \bX \ra \cX$ from an ind-scheme to an algebraic stack is a morphism of cohomological descent if $\epsilon_\bullet:=cosk_0(\epsilon)$ is so (cf \ref{fpindsch} for fibered products of ind-schemes).

\begin{thm} \label{abutmentucd} Suppose that $a: \mathbb{Y}_\bullet \ra \cX$ is augmented. Let $K' \in D_+(\mathbb{Y}_\bullet)$. We have a canonical spectral sequence 
\begin{equation}
E^{p,q}_1=R^q a_{p,*}(K'|_{\mathbb{Y}_p}) \implies R^{p+q} a_* (K'),
\end{equation} which is functorial in $a$. Suppose that $a$ is universally of cohomological descent. Let $K \in D_+(\cX)$ be a complex and set $K'=a^*K$. The  spectral sequence of Theorem \ref{ss} abuts to $H^{p+q}(\cX,K)$. 
\end{thm}
\begin{proof} This spectral sequence is a sheafified version of the one in Theorem \ref{ss}. It is constructed by working with $a_{p,*}$ and $a_*$ instead of $\Gamma(\mathbb{Y}_p,\bullet)$ and $\Gamma(\mathbb{Y}_\bullet,\bullet)$ of (\ref{globalsecsimpsch}) respectively. For $K \in D_+(\cX)$ we have the following isomorphism $$R\Gamma(\cX,K) \stackrel{coh des}{\lra} R\Gamma(\cX, Ra_* \circ a^*K) \simeq R(\Gamma(\cX, \bullet) \circ a_*) (a^*K)\stackrel{Def \, \ref{globalsecsimpsch}}{=} R\Gamma(\mathbb{Y}_\bullet,a^*K),$$ which is functorial in $a$. Thus we get 
$$H^i(\cX,K) \simeq H^i(\mathbb{Y}_\bullet,a^*K),$$
which is also natural in $a$. This shows the equality of abutment terms.
\end{proof}

The following is a mild generalization of \cite[Thm 7.2]{conrad} to the case of a morphism $\epsilon:\bX \ra \cX$ from an ind-scheme to an algebraic stack.

\begin{defi} We say that $\epsilon$ admits a section $s: \cX \ra \bX$ if there is a morphism $s: \cX \ra X_n$ for some $n \in I$ such that $\epsilon_n \circ s = Id_{\cX}$.
\end{defi}
\begin{defi} \label{etlocalsection} We say that $\epsilon$ admits \'etale local sections if any neighbourhood $u: U \ra \cX$, admits an \'etale surjective map $\theta: U' \ra U$ such that making make fiber product of ind-scheme by (\ref{fpindsch}) $\epsilon_{U'}$ admits a section 
\begin{equation} \label{bc}
\xymatrix{
\bX_{U'} \ar[r]_{\tilde{\theta}} \ar[d]^{\epsilon_{U'}} & \bX_U \ar[d]^{\epsilon_U} \\
U' \ar[r]^{\theta} & U
}
\end{equation}
\end{defi}

\begin{thm} \label{et-sec-indsch-stack} Suppose that $\epsilon$ admits sections \'etale locally on $\cX$. Then $\epsilon$ is universally a morphism of cohomological descent.
\end{thm}
\begin{proof}
The universality follows simply because our hypothesis is preserved under base-change. To check that $1 \ra R \epsilon_{U*} \circ \epsilon^*_U$ is an isomorphism of functors on $D_+(U)$, by successively truncating any complex $K \in D_+(U)$ from above, it suffices to consider the case $K=\cF$ for a $\cF \in Ab(U_{\acute{E}t})$.

Let us take a local neighbourhood $u: U \ra \cX$ of $\cX$. Let $\theta: U' \ra U$ be an \'etale and surjective. Since $\theta^*:Ab(U_{\acute{E}t}) \ra Ab(U'_{\acute{E}t})$ is exact, so it preserves kernels and cokernels. Thus it suffices to check $\theta^* \ra \theta^* R \epsilon_{U*}  \epsilon^*_U $ is an isomorphism of functors. Inserting the adjunction morphism $1 \ra \tilde{\theta}_* \tilde{\theta}^*$ on $Ab(\bX_{U,\acute{E}t})$ (cf (\ref{mortopoi})) we get
$$1 \ra R\epsilon_{U*} \epsilon^*_U \ra R\epsilon_{U*}  \tilde{\theta_*}  \tilde{\theta^*} \epsilon^*_U =R(\epsilon_{U}  \tilde{\theta})_*  (\epsilon_U \tilde{\theta})^* = R (\theta  \epsilon_{U'})_*  (\theta  \epsilon_{U'})^* =\theta_*  R \epsilon_{U'*}   \epsilon_{U'}^*\theta^*,$$
i.e $1 \ra R\epsilon_{U*} \epsilon^*_U \ra \theta_*  R \epsilon_{U'*}   \epsilon_{U'}^*\theta^*$. Here we remark that the composed natural transformation is also given by natural adjunction. This  by adjuction gives 
$ \theta^* \ra \theta^* R \epsilon_{U*} \epsilon_U^* \ra     R \epsilon_{U'*}   \epsilon_{U'}^*  \theta^*$. We have thus shown a natural factorization of natural transformations
\begin{equation}
\xymatrix{
\theta^* \ar@/^1pc/[rr] \ar[r]  & \theta^* R \epsilon_{U*} \epsilon_U^*  \ar[r] & R \epsilon_{U'*} \circ  \epsilon_{U'}^* \theta^*,
}
\end{equation}
where arrows from $\theta^*$ are obtained from adjunction of $\epsilon_U$ and $\epsilon_{U'}$. This shows that if in (\ref{bc}) $\epsilon_{U'}$ is of cohomological descent then so is $\epsilon_U$.  So we may suppose that $\bX_U \ra U$ itself has  a section $s: U \ra \bX_U$. Let us abbreviate $\bX_U$ to  $\bX$ and $\epsilon_U$ to $\epsilon$.

 Set $K'=\epsilon^* \cF \in D_+(\bX_\bullet)$. By Theorem \ref{abutmentucd}, we have a spectral sequence
\begin{equation}
E^{p,q}_1=R^q\epsilon_{p*}(K'|_{\bX_p}) \implies R^{p+q}\epsilon_* (K').
\end{equation}
Here the $q$-th row has differential given by the simplicial structure on $\bX_\bullet$.
Let us abbreviate $\bX$ simply by $X$ from now.

Note $E^{\bullet,q}_1$ makes sense for $p \geq -1$, where $\epsilon_{-1}=id_{\cX}$. Thus $E^{-1,q}_1=0$ for $q \geq 1$ since push-forward along the identity map has vanishing higher derived functors. For $p \geq 1$, consider
\begin{equation}
h_p= s \times id_{\bX_p}: \bX_p = \bX^{\times_{\cX} (p+1)} \ra \bX^{\times_{\cX} (p+2)}=\bX_{p+1},
\end{equation}
defined by inserting the section $s$ along the $0$-th coordinate of $\bX_{p+1}$. We have thus
\begin{eqnarray} \label{hreln}
\bX(\partial^0_p)  h_p = id_{\bX_p} & p \geq 0 \\
h_{p-1}  \bX(\partial^j_{p-1}) = \bX(\partial^{j+1}_p) h_p & \forall p \geq 1, 0 \leq j \leq p-1.
\end{eqnarray}
When $K'=\epsilon_*\cF$, by $K'|_{\bX_p}=h_p^*(K'|_{\bX_{p+1}})$, we have
$$K'|_{\bX_{p+1}} \ra Rh_{p*} h_p^*(K'|_{\bX_{p+1}})=Rh_{p*}(K'|_{\bX_p}).$$
Now applying $R\epsilon_{p+1,*}$ we get
$$R\epsilon_{p+1,*}(K'|_{\bX_{p+1}}) \ra R\epsilon_{p+1,*}Rh_{p*} h_p^*(K'|_{\bX_{p+1}}) = R\epsilon_{p+1,*}Rh_{p*}(K'|_{\bX_p})=R a_{p,*}(K'|_{\bX_p}).$$
So we get maps $E^{p+1,q}_1 \ra E^{p,q}_1$.

\begin{lem} These maps form a homotopy between the identity and zero maps on the augmented differential complexes $E^{\bullet,q}_1$ (in degrees $\geq -1$).
\end{lem}
\begin{proof} Let $p \geq 0$. Let us compute $
h_p^* d^{p,q}_1 + d^{p,q}_1 h_{p-1}^* $
\begin{eqnarray}
= h_p^* (\sum_{j=0}^{p+1} (-1)^j \bX(\partial_p^j)^*) + (\sum_{k=0}^{p-1+1} (-1)^k \bX(\partial_{p-1}^k)^*) h_{p-1}^* \\
=h_p^* \bX(\partial^0_p)^* + \sum_{k=0}^{p-1+1} (-1)^k( - h_p^* \bX(\partial^j_p)^* + \bX(\partial^k_{p-1})^* h_{p-1}^*) \\
\stackrel{(\ref{hreln})}{=} id +0 =id.
\end{eqnarray}
For $p=-1$, we have $d^{-1,q}_1=\bX(\partial^0_{-1})^*$. Now $h_{-1}^* d_1^{-1,q}=h_{-1}^*\bX(\partial_{-1}^0)^*=(\bX(\partial_{-1}^0) h_{-1})^*$ which by $(\ref{hreln})$ equals $id^*$. Hence the augmented complex is acyclic.

\end{proof}
Now let us consider the terms of the spectral sequence restricted to the $1$st  quadrant. Since $E^{-1,q}_1=0$ for $q \geq 1$, thus the rows for $q \geq 1$ remain exact upon restriction to the $1$st quadrant. For $q=0$, we have $E^{p,0}_1=\epsilon_{p,*} \epsilon_p^* \cF$. The kernel of $\epsilon_{0,*}\epsilon_0^* \cF = E^{0,0}_1 \ra E^{1,0}_1 = \epsilon_{1,*}\epsilon_{1}^* \cF$ identifies with $\cF$ via the natural inclusion $\cF \ra \epsilon_{0,*}\epsilon_0^* \cF$. So at the $E_2$ stage, the spectral sequence collapses to just the $(0,0)$-position. Hence the total complex has vanishing homology in higher degrees and homology in degree zero is given by $\cF$. Thus $R^n\epsilon_* \epsilon^*\cF=0$ for $n \geq 1$ and the natural map $\cF=E^{0,0}_2 \ra R^0\epsilon_* \epsilon^*\cF$ is an isomorphism. Thus $\cF \ra R \epsilon_* \epsilon^* \cF$ is an isomorphism of complexes of abelian sheaves on $\cX$. Thus $\epsilon$ is a morphism of cohomological descent.
\end{proof}

\bibliographystyle{plain}
\bibliography{BrauerOct18}
%\end{thebibliography}

\end{document}